\crefname{section}{Section}{Sections}
\crefname{subsection}{Subsection}{Subsections}
\crefname{hypothesis}{Hypothesis}{Hypotheses}
\title{Constraint-Preserving High-Order Compact OEDG Method for Spherically Symmetric Einstein--Euler System\thanks{This work was partially supported by Science Challenge Project (No.~TZ2025007), the Shenzhen Science and Technology Program (Grant No.~RCJC20221008092757098), and the National Natural Science Foundation of China (Grant No.~124B2022).}}
\author{Yuchen Huang\thanks{Department of Mathematics, Southern University of Science and Technology, Shenzhen, Guangdong 518055, China. 
  (\email{12432019@mail.sustech.edu.cn}).}
  \and Manting Peng\thanks{Department of Mathematics, Southern University of Science and Technology, Shenzhen, Guangdong 518055, China. 
  (\email{pengmt2024@mail.sustech.edu.cn}).}
\and Kailiang Wu\thanks{Corresponding author. Department of Mathematics and Shenzhen International Center for Mathematics, Southern University of Science and Technology, Shenzhen, Guangdong 518055, China
  (\email{wukl@sustech.edu.cn}).}
}
\newcommand*{\addFileDependency}[1]{
  \typeout{(#1)}
  \@addtofilelist{#1}
  \IfFileExists{#1}{}{\typeout{No file #1.}}
}
\newcommand*{\myexternaldocument}[1]{%
    \externaldocument{#1}%
    \addFileDependency{#1.tex}%
    \addFileDependency{#1.aux}%
}
\newlist{steps}{enumerate}{1}
\setlist[steps, 1]{label =\textbf{ Step \arabic*:}}
\newtheorem{example}{Example}
\begin{document} 
	
	\maketitle

\begin{abstract}
	Numerical simulation of the spherically symmetric Einstein--Euler (EE) system faces severe challenges due to the stringent physical admissibility constraints of relativistic fluids and the geometric singularities inherent in metric evolution. This paper proposes a high-order Constraint-Preserving (CP) compact Oscillation-Eliminating Discontinuous Galerkin (cOEDG) method specifically tailored to address these difficulties. The method integrates a scale-invariant oscillation-eliminating mechanism [M. Peng, Z. Sun, K. Wu, {\em Math. Comp.}, 94: 1147--1198, 2025] into a compact Runge--Kutta DG framework. By characterizing the convex invariant region of the hydrodynamic subsystem with general barotropic equations of state, we prove that the proposed scheme preserves physical realizability (specifically, positive density and subluminal velocity) directly in terms of conservative variables, thereby eliminating the need for complex primitive-variable checks. To ensure the geometric validity of the spacetime, we introduce a bijective transformation of the metric potentials. Rather than evolving the constrained metric components directly, the scheme advances unconstrained auxiliary variables whose inverse mapping automatically enforces strict positivity and asymptotic bounds without any limiters. Combined with a compatible high-order boundary treatment, the resulting CPcOEDG method exhibits robust stability and design-order accuracy in capturing strong gravity-fluid interactions, as demonstrated by simulations of black hole accretion and relativistic shock waves.
\end{abstract}

\begin{keywords}
	Einstein--Euler system, general relativistic hydrodynamics, discontinuous Galerkin method, oscillation elimination, constraint-preserving
\end{keywords}

\begin{AMS}
	65M60, 76Y05, 35L65
\end{AMS}

	\section{Introduction}
	Einstein's theory of general relativity \cite{einstein1916foundation} asserts that matter and energy curve spacetime. In regimes dominated by strong relativistic effects, such as black holes, neutron stars, and high-energy transients, Newtonian approximations are no longer adequate, and fluid dynamics must be reformulated on a curved spacetime governed by the Einstein equations. The canonical model for this interaction is the {Einstein--Euler (EE)} system, where the relativistic Euler equations for the fluid are tightly coupled with the dynamical evolution of the spacetime metric.
	
	The EE system is highly nonlinear, and exact solutions are rarely available in physically interesting settings. A central numerical difficulty is that smooth initial data typically evolve into discontinuous solutions (e.g., shocks), causing high-order schemes to develop spurious Gibbs-type oscillations. For the EE system, these oscillations are often spurious, and they tend to violate essential physical constraints (e.g., positivity of density and pressure, subluminal velocity), which can lead to nonlinear instabilities and eventual blow-up of the simulation. Hence, robust, high-resolution, {constraint-preserving} (CP) numerical schemes are indispensable for accurately predicting relativistic dynamics and interpreting multimessenger observations.

	The discontinuous Galerkin (DG) method, introduced by Reed and Hill \cite{Reed1973} and systematically developed by Cockburn and Shu \cite{CockburnHouShu1990,CockburnLinShu1989a,CockburnShu1989b,CockburnShu1991,CockburnShu1998}, provides a powerful framework for such simulations thanks to its high-order accuracy, inherent parallelism, and geometric flexibility. However, applying DG methods to the EE system raises two major challenges:
	
	(1) \textbf{Constraint preservation.}	
	Physical constraints on both the spacetime metric and the hydrodynamic state are crucial for physical fidelity and numerical stability. These include positivity of density and pressure, subluminal velocity, and admissibility of the metric components. Violating these constraints may generate complex eigenvalues of the Jacobian, rendering the discrete problem ill-posed and causing simulation failure. For hyperbolic conservation laws, two main approaches are widely used to enforce such bounds for high-order schemes: flux-correction limiters \cite{kuzmin2022bound,xu2014parametrized,WuTang2015}, which blend high-order fluxes with first-order bound-preserving ones, and local scaling limiters \cite{zhang2010positivity,zhang2010maximum,WuTang2017,Wu2021}, which guarantee pointwise constraints provided that the cell averages are admissible. 
	
	(2) \textbf{Control of spurious oscillations and enhancement of robustness.}  
	Typical oscillation-control strategies include slope limiters~\cite{kuzmin2004high,zhong2013simple,qiu2005runge,Dumbser2016JCP}, artificial viscosity~\cite{huang2020adaptive,zingan2013implementation,yu2020study}, and damping-based frameworks. Recently, Liu, Lu, and Shu~\cite{LuLiuShu2021,LiuLuShu2022} proposed an innovative damping-based technique, the oscillation-free DG (OFDG) method, which maintains $L^2$ stability, consistency, and optimal error estimates.  However, the damping term in OFDG is not scale-invariant. In the presence of strong shocks, the damping can become stiff and severely restrict the allowable time step and efficiency unless one resorts to exponential Runge--Kutta methods \cite{LiuLuShu2022}.
	
	To overcome this limitation, the oscillation-eliminating DG (OEDG) method was developed in \cite{PengSunWu2025}. In OEDG, the original damping is replaced by a separate, scale-invariant {oscillation-eliminating (OE)} procedure that acts as a filter. OE procedure, applied after each Runge--Kutta (RK) stage, exactly solves an auxiliary ODE to modify high-order modes using scale-invariant jump information. The method preserves conservation, cell averages, and optimal convergence, maintains stability under standard CFL conditions, and can be nonintrusively appended to existing DG codes. It has been successfully extended to unstructured meshes~\cite{DingCuiWu2024}, adapted to Hermite WENO schemes~\cite{FanWu2024}, and applied to a variety of hyperbolic systems~\cite{CaoPengWu,LiuWu2024,YanAbgrallWu} and related frameworks~\cite{li2024spectral,peng2025oscillation}.
	
	To address the above numerical challenges, we propose a high-order {CP} compact OEDG (CPcOEDG) method for spherically symmetric EE systems. The main theoretical and algorithmic contributions are summarized as follows:
	\begin{itemize}
		\item \textbf{Characterization of admissible states in conservative variables.}  
		We derive a necessary and sufficient condition for physical admissibility of the fluid directly in terms of the conservative vector $\boldsymbol{U}$. In contrast to standard approaches that verify constraints in primitive variables---a process complicated by relativistic nonlinearities and implicit mappings---we prove that the admissible set $G_c$ is convex and can be explicitly characterized as
		\[
		G_c=\Big\{\boldsymbol{U}=(\mathcal{T}^{00},\mathcal{T}^{01})^\top:~ \mathcal{T}^{00}-\lvert\mathcal{T}^{01}\rvert>0\Big\}.
		\]
		This explicit convexity provides the theoretical foundation for rigorous constraint preservation throughout the numerical evolution.

		\item \textbf{GQL-based flux inequalities.}
		To establish the {CP} property, we employ the Geometric Quasi-Linearization (GQL) framework \cite{Wu2023Geometric,Wu2017RMHD,Wu2018A} to derive the following key flux inequalities:
		\begin{align*}
			\boldsymbol F(\boldsymbol U)\cdot \boldsymbol n 
			&> s_1(\boldsymbol U)\,\boldsymbol U\cdot \boldsymbol n, \qquad
			\boldsymbol F(\boldsymbol U)\cdot \boldsymbol n 
			< s_2(\boldsymbol U)\,\boldsymbol U\cdot \boldsymbol n,
		\end{align*}
		where the flux function $\boldsymbol F(\boldsymbol U)$ is defined in \eqref{eq:UFS}, and $s_1$ and $s_2$ are given in \eqref{equ:eigenF}. 
		This formulation compensates for the breakdown of the standard Lax--Friedrichs (LF) splitting property in the present context. While LF splitting is a widely used tool for proving CP properties requiring $\boldsymbol U \pm \frac{1}{{s_i}}\,{\boldsymbol F}(\boldsymbol U) \in G_c$, we show in \cref{lem:LFviolation} that this condition seems not applicable for the EE system with Harten--Lax--van Leer (HLL) flux. The derived GQL inequalities provide the alternative bounds needed for a rigorous stability analysis.
	
		\item \textbf{Rigorous preservation of physical constraints for barotropic fluids.} 
		Stability analysis of the EE system is often hindered by the implicit nature of general barotropic equations of state. Building upon the convex structure of $G_c$, we establish a theoretical guarantee that the numerical solution remains physically admissible. Specifically, we prove that the DG spatial operator, combined with the Zhang--Shu limiter \cite{zhang2011maximum}, satisfies the \emph{weak constraint-preserving} property under forward-Euler time stepping. Following the convex decomposition technique from \cite{LiuSunZhang2025,ChenSunXing2024}, we extend our theoretical analysis to the general compact RKDG case, thereby guaranteeing positive density and subluminal velocity for arbitrary high orders under suitable CFL conditions.
		
		\item \textbf{Intrinsic enforcement of metric constraints.} 
		To enforce the geometric constraints on the spacetime metric (namely, metric potentials $A \in (0,1)$ and $B \in (0,\infty)$), we introduce a bijective change of variables. Rather than evolving $A$ and $B$ directly, we evolve unconstrained auxiliary variables $Y, Z \in \mathbb{R}$ via the transformation
		\[
		Y=\tfrac{1}{8}\ln\!\Big(\tfrac{1}{A}-1\Big)+\tfrac{1}{2}, \qquad Z=\ln(B).
		\]
		This mapping ensures that $A$ and $B$ automatically satisfy their strict bounds for any real-valued numerical solution of $(Y,Z)$, effectively eliminating the need for artificial metric limiters.
		
		\item \textbf{Metric regularity and global continuity.} 
		We propose a hybrid update strategy to maintain the smoothness and consistency of the spacetime geometry. The variable $Y$ is evolved via a pointwise ODE at element nodes, while $Z$ is recovered by spatial integration of the Einstein constraint equation~\eqref{eq:rhd3}. 
		By employing high-order numerical fluxes at cell interfaces for the spatial integration, the reconstructed metric components---specifically, $A$ at each time step and $B$ across all RK stages---remain globally continuous.
		
		\item \textbf{Order-preserving boundary treatment.} 
		In standard RKDG schemes, exactly enforcing time-dependent Dirichlet boundary conditions at each RK stage typically induces order reduction. We address this by combining the compact RKDG framework \cite{ChenSunXing2024} with the compatible boundary condition technique \cite{CarpenterGottliebAbarbanelDon1995}. This boundary treatment imposes data consistently with all intermediate stage updates, thereby preserving the full design-order accuracy of the method.
		
		\item \textbf{Numerical validation.} 
		Extensive numerical experiments demonstrate that our scheme retains high-order accuracy while robustly preserving physical constraints. The method effectively eliminates nonphysical oscillations and achieves designed convergence rate, providing a reliable tool for the simulation of astrophysical phenomena such as neutron stars and black hole accretion.
	\end{itemize}
	
	The paper is organized as follows: \cref{sec:gov} introduces the governing equations of the EE system in spherically symmetric general relativity and the associated admissible set together with its equivalent conservative formulation; \cref{sec:scheme} details the numerical scheme and presents the CFL condition that guarantees the CP property; \cref{sec:experiments} reports numerical results; and conclusions are given in \cref{sec:conclusions}. Throughout the paper, the speed of light $c$ is normalized to $1$, and repeated upper–lower index pairs are summed over according to the Einstein summation convention. 
	
	\section{Governing equations}\label{sec:gov}
	\subsection{The EE equations}
	The local conservation laws of the relativistic Euler equations can be expressed
	in terms of the four-current \(J^\mu\) and the stress-energy tensor \(T^{\mu\nu}\).
	With Greek indices \(\mu,\nu=0,1,2,3\), an ideal fluid satisfies
	\[
	J^\mu=\rho_0 u^\mu,\qquad
	T^{\mu\nu}=(\rho+p)u^\mu u^\nu + pg^{\mu\nu},
	\]
	where \(\rho_0\) is the rest-mass density, \(e\) is the specific internal energy,
	and \(\rho=\rho_0(1+e)\) is the rest energy density in the comoving frame.
	The quantity \(p\) denotes the pressure, \(u^\mu\) the fluid four-velocity, and \(g_{\mu\nu}\) the spacetime metric with line element
	\(
	\mathrm{d}s^2=g_{\mu\nu}\,\mathrm{d}x^\mu \mathrm{d}x^\nu,
	\)
	satisfying \(g^{\mu\lambda}g_{\lambda\nu}=\delta^\mu_{\ \nu}\). 
	
	The corresponding local conservation laws are 
	\begin{align}\label{eq:DJT}
		\nabla_\mu J^\mu &= 0, \\
		\nabla_\mu T^{\mu\nu} &= 0, \nonumber
	\end{align}
	where $\nabla_\mu$ denotes the covariant derivative associated with $g_{\mu\nu}$. 
	To close the system, we prescribe both the spacetime geometry and an equation of state:
	\begin{itemize}
		\item \textbf{Spacetime geometry.}
		The metric is determined by Einstein’s field equations
		\begin{equation}\label{eq:distr}
			R^{\mu\nu}-\tfrac12 g^{\mu\nu}R=\kappa T^{\mu\nu},
		\end{equation}
		where $R^{\mu\nu}$ is the Ricci tensor, $R$ is the scalar curvature, and $\kappa=8\pi$ is the Einstein coupling constant. We work in units with Newton’s gravitational constant $\mathcal{G}=1$.
		
		\item \textbf{Equation of state.}
		We assume a barotropic relation
		\begin{equation}\label{eq:pRho}
			p=P(\rho),
		\end{equation}
		satisfying
		\begin{equation}\label{eq:ass_p}
			P(0)=0,\qquad 0<P'(\rho)<1,\qquad 	P(\rho) < \sqrt{P'(\rho)}\,\rho.
		\end{equation}
		The additional requirement $P(\rho) < \sqrt{P'(\rho)}\,\rho$  is crucial for our {CP} analysis and is sharp for the GQL-based flux inequalities in \cref{lem:VG2}.
		\begin{remark}
			Condition \eqref{eq:ass_p} automatically implies 
			\begin{equation}\label{eq:key_re}
				p<\rho,\quad \textrm{sign}(p) =\textrm{sign}(\rho), \quad \frac{\rho}{p}>1 \text{ if } p>0.
			\end{equation}
			This relation plays a central role in the analysis of barotropic equations of state.
		\end{remark}
		
	\end{itemize}

	\subsection{Spherically symmetric EE system}
	In standard Schwarzschild coordinates, the spherically symmetric gravitational metric \cite{VoglerTemple2012} takes the form
	\begin{equation}\label{eq:metric}
		\mathrm{d}s^2=-B\left(t,r\right)\mathrm{d}t^2+\frac{1}{A\left(t,r\right)}\mathrm{d}r^2+r^2\left(\mathrm{d}\theta^2+\sin^2{\theta}\mathrm{d}\varphi^2\right),
	\end{equation}
	where $A:=1-\frac{2M}{r} \in (0,1)$, $B \in (0,\infty)$, $\sqrt{B(t, r)}$ is the lapse function, and $M$ is the mass function. Here, the coordinates $(t, r)$ denote temporal and radial components, respectively, forming the spacetime coordinate system $x = (x^0, x^1, x^2, x^3) = (t, r, \theta, \varphi)$. 
	
	If we further assume that this metric is Lipschitz and that the stress-energy tensor is bounded in $L^\infty$ \cite{WuTang2016}, then the EE system \cref{eq:DJT,eq:pRho,eq:distr} is weakly equivalent to
	\begin{align}
		\frac{\partial\boldsymbol{U}}{\partial t}+\frac{\partial(\sqrt{AB}\boldsymbol{F}(\boldsymbol{U}))}{\partial r}&=\boldsymbol{S}(\boldsymbol{U},A,B,r),\label{eq:rhd1}\\
		\frac{\partial M}{\partial t}&=-\frac{1}{2} \kappa r^2 \sqrt{A B} \mathcal{T}^{01},\label{eq:rhd2}\\
		\frac{1}{B}\frac{\partial B}{\partial r}&=\frac{1-A}{Ar}+\frac{\kappa r}{A}\mathcal{T}^{11},\label{eq:rhd3}
	\end{align}
	where the conserved variables $\boldsymbol{U}$, flux $\boldsymbol{F}$, and source term $\boldsymbol{S}$ are defined by
	\begin{align}\label{eq:UFS}
		&\qquad\qquad \boldsymbol{U}=\left(\mathcal{T}^{00},\mathcal{T}^{01}\right)^\top,\quad\boldsymbol{F}=\left(\mathcal{T}^{01},
		\mathcal{T}^{11}\right)^\top,\\
		\boldsymbol S &= -\sqrt{AB}\begin{pmatrix}
			\dfrac{2}{r}\,\mathcal T^{01}\\[6pt]
			\dfrac{2}{r}\,\mathcal T^{11}
			+ \dfrac{1-A}{2Ar}\,\big(\mathcal T^{00}-\mathcal T^{11}\big)
			+ \dfrac{\kappa r}{A}\big(\mathcal T^{00}\mathcal T^{11}-(\mathcal T^{01})^{2}\big)
			- \dfrac{2p}{r}\end{pmatrix}.\nonumber
	\end{align}
	Let  $\boldsymbol{V} = (\rho, p, v)^\top$ be the primitive variables with velocity $v= \frac{1}{\sqrt{AB}}\frac{u^1}{u^0}$ and $W = \frac{1}{\sqrt{1-v^2}}$ the Lorentz factor. The components of the stress-energy tensor are
	\begin{align}\label{eq:s_etensor}
		\mathcal{T}^{00} = (\rho + p)W^2 - p, \quad \mathcal{T}^{01} = (\rho + p)W^2 v, \quad \mathcal{T}^{11} = (\rho v^2 + p)W^2.
	\end{align}

	\subsection{Physical constraints of fluid variables in EE equations}
	The admissible set $G_p$ for the hydrodynamic component of the EE system, written in terms of the primitive variables $\boldsymbol{V}$, is defined as
	\begin{equation} \label{equ:defGp}
		G_p = \left\{ (\rho, p, v)^\top \mid \rho > 0, \, p > 0, \, |v| < 1 \right\}.
	\end{equation}
	Our goal is to show that this admissible set can be equivalently characterized in terms of the conservative variables, denoted by $G_c$ in \eqref{equ:defGc}. As a first step, we establish the following lemma describing the relation between velocity and momentum.

	\begin{lemma}\label{lem:equi_v}
		Let $u := \mathcal{T}^{01} / \mathcal{T}^{00}$. If $|u| < 1$ and condition \eqref{eq:key_re} holds (implying $\mathrm{sign}(\rho) = \mathrm{sign}(p)$), then the physical velocity $v$ is uniquely given by
		\begin{equation} \label{eq:v_solution}
			v(u) = \frac{2u}{1 + \frac{p}{\rho} + \sqrt{\left(1+\frac{p}{\rho}\right)^2 - 4\frac{p}{\rho}u^2}}.
		\end{equation}
	\end{lemma}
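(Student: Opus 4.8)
The plan is to reduce the implicit relation between $u$ and $v$ to a single scalar quadratic and then isolate the unique physically admissible root. First I would eliminate the Lorentz factor from the definitions in \eqref{eq:s_etensor}. Using $W^2=1/(1-v^2)$, a direct simplification yields
\begin{equation*}
\mathcal{T}^{00}=\frac{\rho+p\,v^2}{1-v^2},\qquad \mathcal{T}^{01}=\frac{(\rho+p)\,v}{1-v^2},
\end{equation*}
so the common factor $1-v^2$ cancels in the ratio and
\begin{equation*}
u=\frac{\mathcal{T}^{01}}{\mathcal{T}^{00}}=\frac{(\rho+p)\,v}{\rho+p\,v^2}.
\end{equation*}
Writing $k:=p/\rho$, which is well defined (assuming the physically relevant case $\rho>0$) and lies in $[0,1)$ by \eqref{eq:key_re} (the sign condition forces $k\ge 0$, while $\rho/p>1$ when $p>0$ forces $k<1$), this relation becomes $u=(1+k)v/(1+k v^2)$.

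Next I would clear the denominator to obtain the quadratic $k\,u\,v^2-(1+k)v+u=0$, whose roots are
\begin{equation*}
v=\frac{(1+k)\pm\sqrt{(1+k)^2-4k u^2}}{2k u}.
\end{equation*}
Rationalizing the minus branch (multiplying through by the conjugate) collapses the factor $2ku$ in the denominator and recovers precisely the closed form \eqref{eq:v_solution}. The remaining task, and the heart of the lemma, is to show that this branch is the only one consistent with subluminal motion.

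For the uniqueness and root selection I would argue by monotonicity of the forward map $v\mapsto u(v)=(1+k)v/(1+kv^2)$. Differentiating gives $u'(v)=(1+k)(1-kv^2)/(1+kv^2)^2$, which is strictly positive for $|v|<1$ and $k\in[0,1)$ because then $kv^2<k<1$; together with $u(0)=0$ and $u(\pm 1)=\pm 1$, this shows $u(\cdot)$ is a strictly increasing bijection from $(-1,1)$ onto $(-1,1)$. Hence for every $u$ with $|u|<1$ there is exactly one subluminal $v$, and it must coincide with the admissible quadratic root. Equivalently, the product of the two quadratic roots equals $\rho/p=1/k>1$, so both roots share a sign and at most one can have modulus below $1$; the minus branch is that root. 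I would also dispatch the degenerate cases $p=0$ (the quadratic degenerates to the linear equation with $v=u$) and $u=0$ ($v=0$) separately, checking that \eqref{eq:v_solution} reproduces these by continuity.

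I expect the main obstacle to be this root-selection step rather than the algebra: the hypotheses $|u|<1$ and $k<1$ enter exactly here, and without the sharp bound $k<1$ supplied by \eqref{eq:key_re} the sign of $1-kv^2$---hence the monotonicity that guarantees a unique subluminal preimage---would fail.
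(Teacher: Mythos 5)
Your proof is correct, and its algebraic core coincides with the paper's: eliminate the Lorentz factor to get $u=(\rho+p)v/(\rho+pv^2)$, clear denominators to obtain the quadratic $puv^2-(\rho+p)v+\rho u=0$, solve it, and rationalize the minus branch to reach \eqref{eq:v_solution}. Where you genuinely diverge is the root-selection step, which both you and the paper treat as the heart of the lemma. The paper substitutes $u=(\rho+p)v/(\rho+pv^2)$ back into the discriminant and collapses it to a perfect square,
\[
(\rho+p)^2-4\rho p\left(\frac{(\rho+p)v}{\rho+pv^2}\right)^{2}
=\left(\frac{(\rho-pv^2)(\rho+p)}{\rho+pv^2}\right)^{2},
\]
which identifies the two roots in closed form as $v_+=\rho/(pv)$ and $v_-=v$; inadmissibility of $v_+$ is then immediate from $\rho/p>1$ in \eqref{eq:key_re}. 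You instead argue indirectly: the forward map $v\mapsto u(v)=(1+k)v/(1+kv^2)$ has derivative $(1+k)(1-kv^2)/(1+kv^2)^2>0$ for $|v|<1$ and $k=p/\rho\in[0,1)$, hence is a strictly increasing bijection of $(-1,1)$ onto itself, so each $|u|<1$ has exactly one subluminal preimage; Vieta's formula (product of roots $=\rho/p>1$) shows at most one root can be subluminal, and it must be the smaller-modulus minus branch. Both arguments are complete and rigorous (your discriminant is automatically positive, since $(1+k)^2-4ku^2\ge(1-k)^2+4k(1-u^2)>0$ for $|u|<1$). Your route buys something extra: the bijectivity statement is precisely the content of \cref{lem:uvequi}, which the paper states without proof, so your argument establishes both lemmas at once and pinpoints exactly where the sharp bound $p/\rho<1$ enters; the paper's perfect-square computation instead buys an explicit description of the spurious root, $v_+=\rho/(pv)$, rather than only a modulus bound. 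One shared caveat, not a gap relative to the paper: like the paper's own proof (which invokes $\rho/p>1$, valid only when $p>0$), you work in the positive-sign case $\rho>0$, $p>0$ of \eqref{eq:key_re}.
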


	\begin{proof}
		From the definition of the stress-energy tensor in \cref{eq:s_etensor}, we obtain the relation
		\begin{equation*}
			p u v^2 - (\rho + p)v + \rho u = 0, \quad \text{with } u := \frac{\mathcal{T}^{01}}{\mathcal{T}^{00}}.
		\end{equation*}
		Solving this quadratic equation for $v$ yields two roots:
		\begin{align}
			v_{\pm}(u)&=\frac{\rho+p\pm\sqrt{\left(\rho+p\right)^2-4\rho pu^2}}{2pu}.\label{eq:vu_pm}
		\end{align}
		To identify the physically admissible root, we substitute \eqref{eq:s_etensor} into \cref{eq:vu_pm} and rewrite \cref{eq:vu_pm} as  
		\[
		v_{\pm}(u)=\frac{\rho+p\pm\sqrt{\left(\rho+p\right)^2-4\rho p\left(\frac{(\rho+p)v}{\rho+pv^2}\right)^2}}{2p\frac{(\rho+p)v}{\rho+pv^2}}.
		\]
		This can be further simplified using a perfect square identity:
		\begin{align}
			v_{\pm}(u)&=\frac{\rho+p\pm\left|\frac{(\rho-pv^2)(\rho+p)}{\rho+pv^2}\right|}{2p\frac{(\rho+p)v}{\rho+pv^2}}
			=\frac{1}{2v}\left(\frac{\rho}{p}+v^2\pm\left|\frac{\rho}{p}-v^2\right|\right).\nonumber
		\end{align}
		Since we restrict to physically meaningful velocities $v \in (-1,1)$, we have  
		\[
		v_{\pm}(u)=\frac{1}{2v}\left(\frac{\rho}{p}+v^2\pm\left(\frac{\rho}{p}-v^2\right)\right),
		\]
		i.e., $v_{+}(u)={\rho}/(pv)$ and $v_{-}(u)=v$. According to \eqref{eq:key_re}, $v_{+}(u)>1$. Therefore, only $v_{-}(u)$ is admissible and the velocity is uniquely given by
		\begin{align}
			v(u)&=\frac{\rho+p-\sqrt{\left(\rho+p\right)^2-4\rho pu^2}}{2pu}=\frac{2u}{1+\frac{p}{\rho}+\sqrt{\left(1-\frac{p}{\rho}\right)^2+4(1-u^2)\frac{p}{\rho}}}.\label{eq:vu}
		\end{align}
	\end{proof}
	
	We next show that the constraint $|v|<1$ is equivalent to $|u|<1$.
	\begin{lemma}\label{lem:uvequi}
		The function $v(u)$ is monotone for $u\in(-1,1)$ and satisfies $v(1) = 1$ and $v(-1)= -1$. Consequently,
		\[
		|v|<1 \quad\text{is equivalent to}\quad |u|<1.
		\]
	\end{lemma}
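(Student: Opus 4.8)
The plan is to avoid differentiating the somewhat cumbersome explicit expression \eqref{eq:v_solution} and instead exploit that $v(u)$ is, by construction in \cref{lem:equi_v}, the inverse of the map $u = \mathcal{T}^{01}/\mathcal{T}^{00}$ restricted to the physical branch $v\in(-1,1)$. Substituting \eqref{eq:s_etensor} and cancelling the common factor $W^2$, this map reads $u(v)=\frac{(\rho+p)v}{\rho+pv^2}$. It therefore suffices to show that $u(\cdot)$ is a continuous, strictly increasing bijection from $(-1,1)$ onto $(-1,1)$; the claimed monotonicity and boundary values of $v(u)$, and hence the equivalence $|v|<1\Leftrightarrow|u|<1$, follow immediately by inversion.

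First I would record the boundary values. Evaluating at $v=\pm1$ gives $u(\pm1)=\pm\frac{\rho+p}{\rho+p}=\pm1$; the same values can be read off directly from \eqref{eq:v_solution}, since at $u=\pm1$ the discriminant collapses to $\bigl(1+\tfrac{p}{\rho}\bigr)^2-4\tfrac{p}{\rho}=\bigl(1-\tfrac{p}{\rho}\bigr)^2$, whose square root equals $1-\tfrac{p}{\rho}$ because $0<p<\rho$ by \eqref{eq:key_re}, so that $v(\pm1)=\pm1$.

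Next I would establish strict monotonicity. Differentiating $u(v)$ yields
\[
u'(v)=\frac{(\rho+p)\bigl(\rho-pv^2\bigr)}{\bigl(\rho+pv^2\bigr)^2}.
\]
For $|v|<1$ the assumption $0<p<\rho$ from \eqref{eq:key_re} gives $\rho-pv^2>\rho-p>0$, while $\rho+p>0$ and the denominator is strictly positive; hence $u'(v)>0$ throughout $(-1,1)$, so $u(\cdot)$ is strictly increasing there. A continuous, strictly increasing function on $(-1,1)$ with limiting endpoint values $\pm1$ is a bijection onto $(-1,1)$, and its inverse $v(u)$ inherits strict monotonicity together with $v(\pm1)=\pm1$; the equivalence of the two constraints is then immediate.

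I do not expect a genuine obstacle here; the only point requiring care is the strict inequality $p<\rho$, which is exactly what guarantees $\rho-pv^2>0$ (equivalently, that the numerator of $u'$ never vanishes on $(-1,1)$), and this is supplied by \eqref{eq:ass_p}. One should also note that $\rho>0$ is used implicitly, so that $p/\rho$ and the branch selection of \cref{lem:equi_v} remain meaningful; if one prefers to work entirely with \eqref{eq:v_solution}, the identical monotonicity conclusion follows by writing $v(u)=2u/g(u)$ with $g(u)=1+\tfrac{p}{\rho}+\sqrt{(1+\tfrac{p}{\rho})^2-4\tfrac{p}{\rho}u^2}$ even, positive, and strictly decreasing in $|u|$, so that $v$ is odd and strictly increasing on $(0,1)$ and hence on $(-1,1)$.
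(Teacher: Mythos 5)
Your proof is correct. The paper states \cref{lem:uvequi} without any proof (it is presented as an immediate follow-up to \cref{lem:equi_v}), so there is no argument in the text to compare against; your write-up supplies precisely the details the paper leaves implicit. The route you take---inverting the rational map $u(v)=\frac{(\rho+p)v}{\rho+pv^2}$ rather than differentiating the explicit branch \eqref{eq:v_solution}---is the natural one: the computation $u'(v)=\frac{(\rho+p)(\rho-pv^2)}{(\rho+pv^2)^2}>0$ on $(-1,1)$ is clean, the endpoint values $u(\pm1)=\pm1$ are immediate, and \cref{lem:equi_v} (uniqueness of the physical root of the quadratic $puv^2-(\rho+p)v+\rho u=0$) is exactly what licenses identifying the inverse of this bijection with the function $v(u)$ named in the statement, so that $|v|<1\Leftrightarrow|u|<1$ follows. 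Your closing caveat is also the right one to flag: the argument genuinely uses $0<p<\rho$ (i.e., \eqref{eq:key_re} together with positive density, which is the standing physical assumption of this section, guaranteed by \eqref{eq:ass_p} when $\rho>0$); without it, the strict inequality $\rho-pv^2>0$ and the branch selection in \cref{lem:equi_v} would both require separate justification. The alternative argument you sketch directly from \eqref{eq:v_solution}---oddness of $v$ plus the fact that the even, positive denominator $g(u)$ is strictly decreasing in $|u|$---is equally valid and self-contained, so either version would serve as a complete proof of the lemma.
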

	
	\begin{theorem}\label{the:G}
		The admissible set $G_p$ is equivalent to the set of conservative variables $G_c$, defined by
		\begin{equation}
			\label{equ:defGc}	
			G_c=\left\{\boldsymbol{U}=\left(\mathcal{T}^{00}, \mathcal{T}^{01}\right)^\top:~ \boldsymbol{U}\cdot\boldsymbol{n}>0 \text{ with } \boldsymbol{n}=(1,\pm1)^\top\right\}.
		\end{equation}
		Furthermore, the set $G_c$ is convex.
	\end{theorem}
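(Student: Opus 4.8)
The plan is to split the statement into two independent parts: the set equivalence between $G_p$ and $G_c$, and the convexity of $G_c$. I would dispatch the convexity first, since it is the easier half. Writing out the defining condition $\boldsymbol{U}\cdot\boldsymbol{n}>0$ for the two normals $\boldsymbol{n}=(1,1)^\top$ and $\boldsymbol{n}=(1,-1)^\top$ shows that
\[
G_c=\{\mathcal{T}^{00}+\mathcal{T}^{01}>0\}\cap\{\mathcal{T}^{00}-\mathcal{T}^{01}>0\}=\{\mathcal{T}^{00}>|\mathcal{T}^{01}|\}.
\]
Each of the two sets on the left is an open half-plane, hence convex, and an intersection of convex sets is convex; equivalently, $\mathcal{T}^{00}-|\mathcal{T}^{01}|$ is concave, so its strict superlevel set is convex.

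For the equivalence I would prove the two inclusions separately. For $G_p\subseteq G_c$, I would substitute the definitions in \eqref{eq:s_etensor} and use $W^2=1/(1-v^2)$ to obtain the clean identities
\[
\mathcal{T}^{00}+\mathcal{T}^{01}=\frac{\rho+pv}{1-v},\qquad \mathcal{T}^{00}-\mathcal{T}^{01}=\frac{\rho-pv}{1+v}.
\]
For $(\rho,p,v)\in G_p$ the denominators $1\mp v$ are positive, while both numerators satisfy $\rho\pm pv\ge\rho-p|v|>\rho-p>0$, since $|v|<1$ and since the equation-of-state assumption \eqref{eq:ass_p} forces $p<\rho$ through \eqref{eq:key_re}. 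Hence $\mathcal{T}^{00}\pm\mathcal{T}^{01}>0$ and $\boldsymbol{U}\in G_c$.

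For the reverse inclusion $G_c\subseteq G_p$, I would start from $\boldsymbol{U}\in G_c$, which gives $\mathcal{T}^{00}>0$ together with $|u|<1$ for $u:=\mathcal{T}^{01}/\mathcal{T}^{00}$. \cref{lem:uvequi} then yields $|v|<1$, and the explicit formula \eqref{eq:v_solution} in \cref{lem:equi_v} shows that $v$ carries the same sign as $u$, because its denominator is positive; consequently $uv\in[0,1)$. The same substitution as above gives the identity $\mathcal{T}^{00}-\mathcal{T}^{01}v=\rho$, which I would rewrite as $\rho=\mathcal{T}^{00}(1-uv)$. Since $\mathcal{T}^{00}>0$ and $1-uv>0$, this forces $\rho>0$, and then $p=P(\rho)>0$ follows from \eqref{eq:ass_p}. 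Thus $(\rho,p,v)\in G_p$.

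The main obstacle is this reverse inclusion, where the recovery of $(\rho,p,v)$ from $\boldsymbol{U}$ looks implicit: the formula \eqref{eq:v_solution} for $v(u)$ itself depends on the unknown ratio $p/\rho$. The identity $\rho=\mathcal{T}^{00}(1-uv)$ is precisely what removes this apparent circularity, since it expresses $\rho$ through $\mathcal{T}^{00}$ and the product $uv$ alone, and the only properties of $v$ needed are its sign and the bound $|v|<1$, both supplied by the two preceding lemmas. I would therefore invoke those lemmas at exactly this step rather than attempt to solve the implicit recovery equation directly.
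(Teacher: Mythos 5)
Your proof is correct, and its skeleton matches the paper's: convexity as an intersection of two open half-planes, the inclusion $G_p\subset G_c$ by explicit algebraic identities, and the reverse inclusion by reducing to $|u|<1$ and invoking \cref{lem:uvequi} to get $|v|<1$. The genuine difference is the endgame of the reverse inclusion, and there your route is actually the stronger one. The paper concludes $p>0$ first, via the chain
\[
\mathcal{T}^{00}-|\mathcal{T}^{01}|=\frac{\rho-p|v|}{1+|v|}>\frac{p(1-|v|)}{1+|v|}>0,
\]
and then obtains $\rho>0$ from the sign agreement in \eqref{eq:key_re}. As written, that chain asserts rather than derives its last inequality: $\frac{p(1-|v|)}{1+|v|}>0$ \emph{is} the claim $p>0$, and positivity of the left-hand side does not transfer down the chain. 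Closing this properly requires a small case analysis ruling out $\rho<0$, e.g.\ using $|P(\rho)|<|\rho|$, which follows from $P(0)=0$ and $0<P'<1$. Your argument sidesteps this entirely: the identity $\mathcal{T}^{00}-\mathcal{T}^{01}v=\rho$, rewritten as $\rho=\mathcal{T}^{00}(1-uv)$, gives $\rho>0$ immediately from $\mathcal{T}^{00}>0$ and $uv\le|u|\,|v|<1$, after which $p=P(\rho)>0$ follows from \eqref{eq:ass_p}. (Note that your appeal to the sign agreement of $u$ and $v$ through the denominator of \eqref{eq:v_solution} is true but superfluous: $uv<1$ is all you need.) One shared caveat: like the paper, you implicitly assume that each $\boldsymbol{U}\in G_c$ corresponds to a primitive triple $(\rho,p,v)$ satisfying \eqref{eq:s_etensor} with $p=P(\rho)$, so both arguments are conditional on the conservative-to-primitive correspondence; you are at the same level of rigor as the paper on this point, and strictly cleaner on the positivity argument.
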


	\begin{proof} 
		The convexity of $G_c$ follows immediately from the linearity of the inequalities defining it and is therefore omitted. We focus on proving the equivalence $G_p = G_c$.
		
		\medskip\noindent
		\textbf{Inclusion $G_p \subset G_c$.}  
		Consider a primitive state $\boldsymbol{V} = (\rho, p, v)^\top \in G_p$. Using the ideal fluid relations, we compute
		\begin{equation}\label{eq:T00T01}
			\mathcal{T}^{00}(\boldsymbol{V})-\left|\mathcal{T}^{01}(\boldsymbol{V})\right|
			=\frac{\rho+p|v|^2-(\rho+p)|v|}{1-v^2}
			=\frac{p\left(\frac{\rho}{p}-|v|\right)}{1+|v|}.
		\end{equation}
		Since $p < \rho$ and $|v| < 1$, the right-hand side is strictly positive. Hence, $\boldsymbol{U}(\boldsymbol{V}) \in G_c$. 
		
		\medskip\noindent
		\textbf{Inclusion $G_c \subset G_p$.}  
		Conversely, assume $\boldsymbol{U} \in G_c$. Then $|u|<1$ holds and, by Lemma~\ref{lem:uvequi}, this implies $|v|<1$. Furthermore, we have
		\begin{equation}
			\mathcal{T}^{00} - |\mathcal{T}^{01}| = \frac{\rho - p|v|}{1+|v|} > \frac{p(1-|v|)}{1+|v|}
			> 0,
		\end{equation}
		which guarantees $p>0$. The positivity of $\rho$ then follows from the fact that $p$ and $\rho$ share the same sign. Thus $\boldsymbol{V} \in G_p$, completing the proof.
	\end{proof}

	\subsection{Metric constraints and auxiliary variables}
	To ensure the Lorentzian signature $(-, +, +, +)$ of the metric \eqref{eq:metric}, the metric potentials must strictly satisfy
	\begin{equation} \label{eq:metric_constraints}
		A(t,r) \in (0, 1) \quad \text{and} \quad B(t,r) \in (0, \infty).
	\end{equation}

	\subsubsection*{Automatic constraint preservation} 
	We propose an approach that evolves unconstrained auxiliary variables $Y$ and $Z$ instead of the constrained variables $M$ and $B$. This bijective mapping ensures that the metric constraints are intrinsically satisfied via the inverse transformations, thereby removing the need for any post-processing limiters or cut-off techniques.
	
	\begin{itemize}
		\item \textbf{Transformation between $Y$ and $M$.}  
		We define the auxiliary variable $Y$ through
		\begin{equation}
			Y = -\frac{1}{8} \ln\left(\frac{r}{2M} - 1\right) + \frac{1}{2}.
		\end{equation}
		The inverse transformation is explicitly given by
		\begin{equation}
			M = \frac{r}{2\left(1 + e^{-8(Y - 1/2)}\right)} \in \left(0, \frac{r}{2}\right).
		\end{equation}
		Thus, for any real-valued numerical solution $Y$, the mass function $M$ remains strictly within $(0, r/2)$, which in turn ensures $A = 1 - 2M/r \in (0, 1)$.
		
		\item \textbf{Transformation between $Z$ and $B$.}  
		We set $Z = \ln(B)$, so that the inverse mapping $B = e^Z$ guarantees $B \in (0, \infty)$ for all real $Z$.
	\end{itemize}
	
	Since $A$ and $B$ are assumed Lipschitz continuous, the variables $M$, $Y$, and $Z$ are also continuous. Consequently, the evolution equations \eqref{eq:rhd2}--\eqref{eq:rhd3} for $A$ and $B$ can be reformulated in terms of $Y$ and $Z$, and the resulting spherically symmetric EE system is given by \eqref{eq:rhd-YZ-alt} in a weak sense.
	
	\section{Numerical scheme}\label{sec:scheme}
	In this section, we present the numerical framework for solving the EE system on the domain {$\Omega = [r_L, r_R]$}:
	\begin{equation}\label{eq:rhd-YZ-alt}
		\begin{aligned}
			\frac{\partial\boldsymbol{U}}{\partial t}+\frac{\partial(\sqrt{A(Y)B(Z)}\boldsymbol{F}(\boldsymbol{U}))}{\partial r}&=\boldsymbol{S}(\boldsymbol{U},A(Y),B(Z),r),\\
			\partial_t Y &= -\frac{\kappa r}{8}\,\sqrt{\frac{A(Y)}{B(Z)}}\,\frac{\mathcal T^{01}}{1-A},\\[0.6ex]
			\partial_r Z &= \frac{1-A(Y)}{A(Y)\,r} + \frac{\kappa r}{A(Y)}\,\mathcal T^{11}.
		\end{aligned}
	\end{equation}
	The framework consists of the cOEDG scheme for $\boldsymbol{U}$, the discretization and continuous reconstruction of $Y$ and $Z$, and the enforcement of fluid constraints.
	Note that in \eqref{eq:rhd-YZ-alt}, only the evolution of the conservative variables $\boldsymbol{U}$ involves convection; accordingly, we discretize $\boldsymbol{U}$ and $(Y,Z)^\top$ using different strategies. 
	
	\subsection{The cOEDG method for \(\boldsymbol{U}\)}
	The cOEDG scheme combines a componentwise OEDG damping mechanism \cite{LiuWu2024} with the compact Runge--Kutta DG (cRKDG) formulation. The latter blends the standard DG spatial operator with a local operator to maintain compactness and high order under nonhomogeneous Dirichlet conditions~\cite{Gustafsson1975}. To preserve compactness, the cOEDG method applies the OE step only at the final stage of the cRKDG update.
	
	We first recall the conventional DG and local spatial discretizations for $\boldsymbol{U}$. Let $I_j =(r_{j-\frac{1}{2}}, r_{j+\frac{1}{2}})$, $j = 1, \ldots, N$, form a partition of the domain $\Omega$. We define the finite element space
	\begin{displaymath}
		\mathbb{V}_h^k = \left\{ q \in L^2(\Omega) : \left. q \right|_{I_j} \in \mathbb{P}^k(I_j), \quad j = 1, \ldots, N \right\},
	\end{displaymath}
	where $\mathbb{P}^k(I_j)$ denotes the space of polynomials of degree at most $k$ on the cell $I_j$. The conventional and local DG methods seek a numerical solution $\boldsymbol{U}_h\in [\mathbb{V}_h^k]^2$ satisfying
	\begin{align}\label{eq:hD_DG}
		&\int_{I_j} ({\boldsymbol{U}_h})_t\cdot \boldsymbol{\phi } \, {\mathrm{d}r} =\mathcal{L}_j^{\text{DG}/\text{loc}}({\boldsymbol{U}_h},A_h,B_h; \boldsymbol{\phi }),\quad \forall \boldsymbol{\phi } \in [\mathbb{V}_h^k]^2, 
	\end{align}
	with 
	\begin{align*}
		\mathcal{L}_j^{\text{DG}}({\boldsymbol{U}_h}, A_h,B_h;\boldsymbol{\phi }) &:= \int_{I_j}\sqrt{A_hB_h} \boldsymbol{F}(\boldsymbol{U}_h) \cdot\partial_r \boldsymbol{\phi } \, \mathrm{d}r - \left[ \widehat{\left(\sqrt{A B} {\boldsymbol{F}}\right)}_{j+\frac{1}{2}}\cdot \boldsymbol{\phi }\left(r_{j+\frac{1}{2}}^-\right)\right.\\
		&\left. - \widehat{\left(\sqrt{A B} {\boldsymbol{F}}\right)}_{j-\frac{1}{2}} \cdot\boldsymbol{\phi }\left(r_{j-\frac{1}{2}}^+\right) \right] + \int_{I_j} \boldsymbol{S}(\boldsymbol{U}_h, A_h, B_h, r)  \cdot\boldsymbol{\phi }\, \mathrm{d}r, \nonumber
	\end{align*}
	and 
	\begin{equation*}
		\resizebox{0.92\hsize}{!}{$
			\begin{aligned}
				\mathcal{L}_j^{\text{loc}}({\boldsymbol{U}_h},A_h,B_h; \boldsymbol{\phi}) &:= \int_{I_j} \sqrt{A_hB_h}\boldsymbol{F}(\boldsymbol{U}_h) \cdot \partial_r \boldsymbol{\phi } \, {\mathrm{d}r} - \left[ \sqrt{A_{h,j+\frac{1}{2}} B_{h,j+\frac{1}{2}}} {{\boldsymbol{F}}_{j+\frac{1}{2}}} \cdot\boldsymbol{\phi }\left(r_{j+\frac{1}{2}}^-\right)\right.\\
				&\left. - \sqrt{A_{h,j-\frac{1}{2}} B_{h,j-\frac{1}{2}}} {{\boldsymbol{F}}_{j-\frac{1}{2}}} \cdot\boldsymbol{\phi }\left(r_{j-\frac{1}{2}}^+\right) \right] + \int_{I_j} \boldsymbol{S}(\boldsymbol{U}_h, A_h, B_h, r)  \cdot\boldsymbol{\phi } \, {\mathrm{d}r}, \nonumber
			\end{aligned}$}
	\end{equation*}
	where \(\boldsymbol{F}\) is the physical flux and 
	\(\widehat{\left(\sqrt{A B} {\boldsymbol{F}}\right)}_{j+\frac{1}{2}}
	= \widehat{\left(\sqrt{A B} {\boldsymbol{F}}\right)}\!\left(\boldsymbol{U}_{j+\tfrac12}^{\pm},A_{j+\tfrac12}^{\pm},B_{j+\tfrac12}\right)\)
	denotes the HLL numerical flux at the interface \(r_{j+\tfrac12}\), where
	\begin{equation*}
		\resizebox{1\hsize}{!}{$
			\begin{aligned}
				\widehat{\left(\sqrt{A B} {\boldsymbol{F}}\right)}\!\left(\boldsymbol{U}_{j+\tfrac12}^\pm,A_{j+\frac12}^\pm,B_{j+\tfrac12}\right)=&\frac{\alpha^R_{j+\frac{1}{2}}(\sqrt{A B} {\boldsymbol{F}})_{j+\frac12}^--\alpha^L_{j+\frac{1}{2}}(\sqrt{A B} {\boldsymbol{F}})_{j+\frac12}^+}{\alpha^R_{j+\frac12}-\alpha^L_{j+\frac12}}\\
				&+\frac{\alpha^L_{j+\frac{1}{2}}\alpha^R_{j+\frac{1}{2}}\left(\boldsymbol{U}_{j+\frac{1}{2}}^+-\boldsymbol{U}_{j+\frac{1}{2}}^-\right)}{\alpha^R_{j+\frac12}-\alpha^L_{j+\frac12}},\nonumber
			\end{aligned}$}
	\end{equation*}
	with
	\begin{subequations}
		\label{equ:sLsR}
		\begin{align}
			\alpha^L_{j+\frac{1}{2}}&=\min{\left(\lambda_1\left(\boldsymbol{U}_{j+\frac{1}{2}}^-\right),\lambda_1\left(\boldsymbol{U}_{j+\frac{1}{2}}^+\right),0\right)},\\
			\alpha^R_{j+\frac{1}{2}}&=\max{\left(\lambda_2\left(\boldsymbol{U}_{j+\frac{1}{2}}^-\right),\lambda_2\left(\boldsymbol{U}_{j+\frac{1}{2}}^+\right),0\right)}.
		\end{align}
	\end{subequations}
	Here, $\lambda_1$ and $\lambda_2$ denote the smallest and largest eigenvalues of the Jacobian matrix $\partial \left(\sqrt{A B} {\boldsymbol{F}}\right)/\partial \boldsymbol{U}$ (see \cref{app:nf} for explicit formulas). 
	All cell integrals are evaluated by Gauss quadrature; that is, for any \(f\),
	\[
	\int_{I_j} f(r)\,{\mathrm{d}r} \;\approx\; \sum_{l=0}^{N^{\mathrm{GQ}}} w_{j,l}\, f\!\left(r_{j,l}^{\mathrm{GQ}}\right),
	\]
	where \(\{w_{j,l}\}\) and \(\{r_{j,l}^{\mathrm{GQ}}\}\) are the Gauss weights and nodes on \(I_j\). The values of \(A\) and \(B\) at the Gauss points are obtained by
evaluating the interpolating polynomials constructed
from their nodal values in \cref{subsec:YZupdate} , whereas the values of \(\boldsymbol{U}\) at the
Gauss points are computed directly from the numerical solution.

	Consider an explicit RK method of order $r$ with $s$ stages, associated with the Butcher tableau
	\[
	\begin{array}{c|c}
		\mathbf{c} & L \\
		\hline
		& \mathbf{b}
	\end{array}
	\quad \text{with} \quad L = (a_{im})_{s \times s}, \quad \mathbf{b} = (b_1, \ldots, b_s),
	\]
	where \(L\) is a strictly lower triangular matrix. The corresponding cRKDG scheme is
	\begin{subequations}
		\begin{align}\label{eq:rk}
			{\boldsymbol{U}}_\sigma^{n}  & = \mathcal{F}_\tau {\boldsymbol{U}}_h^{n}, \\
			\int_{I_j} \boldsymbol{U}_\sigma^{n,i}  \cdot\boldsymbol{\phi } \, {\mathrm{d}r}&= \int_{I_j} \boldsymbol{U}_\sigma^{n}  \cdot\boldsymbol{\phi } \, {\mathrm{d}r} + \Delta t\sum_{m=0}^{i-1} a_{im} \mathcal{L}_j^{\mathrm{loc}} \left( \boldsymbol{U}_\sigma^{n,m},A_h^{n,m},B_h^{n,m}; \boldsymbol{\phi } \right), \\
			\int_{I_j} \boldsymbol{U}_h^{n,s}  \cdot\boldsymbol{\phi }\, {\mathrm{d}r}&= \int_{I_j} \boldsymbol{U}_\sigma^{n}  \cdot\boldsymbol{\phi }  {\,\mathrm{d}r}+\Delta t  \sum_{i=0}^{s-1} b_i\mathcal{L}_j^{\mathrm{DG}} \left( \boldsymbol{U}_\sigma^{n,i},A_h^{n,i},B_h^{n,i}; \boldsymbol{\phi } \right),\\
			\boldsymbol{U}_h^{n+1} &= \boldsymbol{U}_h^{n,s}.
		\end{align}
	\end{subequations}
	Here, the updates of $A_h(Y)$ and $B_h(Z)$ are postponed until \cref{subsec:YZupdate}.
	\begin{remark}
		By replacing the local spatial operator $\mathcal{L}_j^{\text{loc}}$ with the DG spatial operator $\mathcal{L}_j^{\text{DG}}$ and inserting the OE procedure after each RK stage, we recover the conventional OEDG method
		\begin{subequations}
			\begin{align*}\label{eq:rkoedg}
				{\boldsymbol{U}}_\sigma^{n}  & = \mathcal{F}_\tau {\boldsymbol{U}}_h^{n}, \\
				\int_{I_j} \boldsymbol{U}_h^{n,i}  \cdot\boldsymbol{\phi } \, {\mathrm{d}r}&= \int_{I_j} \boldsymbol{U}_\sigma^{n}  \cdot\boldsymbol{\phi } \, {\mathrm{d}r} + \Delta t\sum_{m=0}^{i-1} a_{im} \mathcal{L}_j^{\mathrm{DG}} \left( \boldsymbol{U}_\sigma^{n,m},A_h^{n,m},B_h^{n,m}; \boldsymbol{\phi } \right), \\[-0.3em]
				{\boldsymbol{U}}_\sigma^{n,i}  & = \mathcal{F}_\tau {\boldsymbol{U}}_h^{n,i},\\
				\int_{I_j} \boldsymbol{U}_h^{n,s}  \cdot\boldsymbol{\phi }\, {\mathrm{d}r}&= \int_{I_j} \boldsymbol{U}_\sigma^{n}  \cdot\boldsymbol{\phi }  {\,\mathrm{d}r}+\Delta t  \sum_{i=0}^{s-1} b_i\mathcal{L}_j^{\mathrm{DG}} \left( \boldsymbol{U}_\sigma^{n,i},A_h^{n,i},B_h^{n,i}; \boldsymbol{\phi } \right),\\
				\boldsymbol{U}_h^{n+1} &= \boldsymbol{U}_h^{n,s}.
			\end{align*}
		\end{subequations}
	\end{remark}
	The operator \(\mathcal{F}_\tau\) maps \(\boldsymbol U_h\) to \(\mathcal{F}_\tau\boldsymbol U_h=\boldsymbol U_\sigma(\cdot,\tau)\) through a pseudo-time \(\hat t\) damping ODE for \(\boldsymbol U_\sigma(r,\hat t)\) with initial data \(\boldsymbol U_h^n\):
	\begin{equation}\label{eq:OE-ode}
		\resizebox{0.99\hsize}{!}{$
			\begin{cases}
				\dfrac{d}{d\hat t}\displaystyle \int_{I_j} \boldsymbol U_\sigma \cdot \boldsymbol\phi \, {\mathrm{d}r}
				\;+\; \displaystyle\sum_{m=0}^{k} \beta_j 
				\int_{I_j} \left[\,\dfrac{\Lambda_j^{m}(\boldsymbol U_h)}{h_j}\bigl(\boldsymbol U_\sigma - P^{m-1}\boldsymbol U_\sigma\bigr)\right]\cdot \boldsymbol\phi \, {\mathrm{d}r} \;=\; 0,
				\quad \forall\, \boldsymbol\phi \in [\mathbb P^{k}(I_j)]^{2},\\[1.0ex]
				\boldsymbol U_\sigma(r,0)=\boldsymbol U_h^{n}(r).
			\end{cases}
			$}
	\end{equation}
	Here, \(\Lambda_j^{m}=\mathrm{diag}\bigl(\sigma_j^{m}(\mathcal T^{00}),\,\sigma_j^{m}(\mathcal T^{01})\bigr)\).
	The approximate local maximal wave speed \(\beta_j\) is given by the spectral radius of
	\(\frac{\partial \sqrt{A B} {\boldsymbol{F}} }{\partial \boldsymbol U}\bigl(\overline{\boldsymbol U}^{\,n}_{h,j},\,\overline{A}_{h,j}^n,\overline{B}_{h,j}^n\bigr)\), where the cell averages are $\overline{\boldsymbol U}^{\,n}_{h,j}:=\frac{1}{|I_j|}\int_{I_j} \boldsymbol{U}^{\, n}_{h,j} \mathrm{d}x$, $\,\overline{A}_{h,j}^n$, and $\overline{B}_{h,j}^n$.
	The operator \(P^{m}\) is the standard \(L^{2}\) projection onto \([\mathbb P^{m}(I_j)]^{2}\), and \(P^{-1}\) is understood as \(P^{0}\).
	
	Unlike the standard OEDG choice, for $i\in \{0,1\}$ we take componentwise damping coefficients
	\[
	\sigma_j^{m}\!\left(\mathcal T^{0i}_h\right)=
	\begin{cases}
		0, & \text{if } \mathcal T^{0i}_h \equiv \mathrm{avg}(\mathcal T^{0i}_h)\ \text{on } I_j,\\[0.6ex]
		\displaystyle \frac{(2m+1)\, h_j^{m}}{(2k-1)\,m!}\,
		\frac{\sqrt{\bigl[\partial_r^{m}\mathcal T^{0i}_h\bigr]_{j-\frac12}^{2}
				+\bigl[\partial_r^{m}\mathcal T^{0i}_h\bigr]_{j+\frac12}^{2}}}
		{\ \bigl\|\mathcal T^{0i}_h-\mathrm{avg}(\mathcal T^{0i}_h)\bigr\|_{L^\infty(\Omega)}}, & \text{otherwise},
	\end{cases}
	\]
	where \([w]_{j+\frac12}=w(r_{j+\frac12}^{+})-w(r_{j+\frac12}^{-})\) denotes the jump at the interface \(r=r_{j+\frac12}\).
	
	\begin{remark} \label{rem:OEDGandcOEDG}
		The original OEDG methods employ damping coefficients based on the maximum over all components. However, in scenarios such as \cref{ex:TOV}, the initial momentum satisfies $\mathcal{T}_h^{01} \equiv 0$, which contradicts the nonconstant assumption used in the optimal error analysis of the original OEDG theory \cite{PengSunWu2025}. In such cases, the standard approach yields $\max_j\sigma_j^m \approx \mathcal{O}(h^{1/2})$, which can degrade the observed accuracy. 
		
		To avoid this, we adopt a \emph{componentwise} damping strategy. This ensures that whenever the damping coefficient for a given component scales at a lower order, the associated residual term $\int (\mathcal{T}_h^{0i} - P^{m-1}\mathcal{T}_h^{0i})\phi \, {\mathrm{d}r}$ is of sufficiently high order to compensate. As a result, the damping term remains high-order and does not destroy the design-order accuracy of the scheme.
	\end{remark}

	\begin{remark}
		Consider an orthonormal basis \(\{\Phi_{j}^{(m)}\}_{m=0}^{k}\) of \(\mathbb{P}^{k}(I_{j})\) and expand
		\[
		\mathcal{T}_{h}^{0i}=\sum_{m=0}^{k}\bigl(\mathcal{T}_{h}^{0i}\bigr)^{(m)}\,\Phi_{j}^{(m)}.
		\]
		The solution of the damping ODE \eqref{eq:OE-ode} admits the explicit form
		\begin{align}
			\mathcal{T}_{\sigma}^{0i}
			= \bigl(\mathcal{T}_{h}^{0i}\bigr)^{(0)} \Phi_{j}^{(0)}
			+ \sum_{m=1}^{k}
			\exp\!\Bigl(-\frac{\beta_{j}\,\tau}{h_{j}}\sum_{l=0}^{m}\sigma_{j}^{\,l}\!\bigl(\mathcal{T}_{h}^{0i}\bigr)\Bigr)\,
			\bigl(\mathcal{T}_{h}^{0i}\bigr)^{(m)} \Phi_{j}^{(m)}. \nonumber
		\end{align}
		Equivalently, in terms of modal coefficients,
		\[
		\bigl(\mathcal{T}_{\sigma}^{0i}\bigr)^{(0)}=\bigl(\mathcal{T}_{h}^{0i}\bigr)^{(0)},\qquad
		\bigl(\mathcal{T}_{\sigma}^{0i}\bigr)^{(m)}=\exp\!\Bigl(-\frac{\beta_{j}\,\tau}{h_{j}}\sum_{l=0}^{m}\sigma_{j}^{\,l}\!\bigl(\mathcal{T}_{h}^{0i}\bigr)\Bigr)\,
		\bigl(\mathcal{T}_{h}^{0i}\bigr)^{(m)},\ \ m\ge1.
		\]
		Thus the OE step acts as a modal filter that preserves the cell average while exponentially damping the higher modes \cite{PengSunWu2025}.
		
	\end{remark}
	
	\subsection{Updates for \((Y,Z)\)}\label{subsec:YZupdate}
	We evolve the pointwise values of the metric variables \((Y,Z)\) at the \(k{+}1\)
	Gauss--Lobatto (GL) nodes \(\{r_{j,\ell}\}_{\ell=0}^{k}\), with
	\(r_{j,0}=r_{j-\frac12}\) and \(r_{j,k}=r_{j+\frac12}\).
	
	\paragraph{Stagewise update for \(Y\)}
	For interior GL nodes \(\ell=1,\dots,k-1\), we apply the RK method to the
	pointwise ODE
	\begin{equation}\label{eq:Y-ode-internal}
		\partial_t Y(r_{j,\ell})
		= -\,\frac{\kappa\, r_{j,\ell}}{8}\,
		\sqrt{\frac{A\!\bigl(Y(r_{j,\ell})\bigr)}{B\!\bigl(Z(r_{j,\ell})\bigr)}}\,
		\frac{\mathcal T^{01}(r_{j,\ell})}{1- A\!\bigl(Y(r_{j,\ell})\bigr)}.
	\end{equation}
	At the cell interfaces \(\ell\in\{0,k\}\), all intermediate RK
	stages use the same pointwise formula \eqref{eq:Y-ode-internal}. Only in the \emph{final} RK
	stage do we evaluate the right-hand side using interface numerical states from an HLL scheme. For instance, at the right interface
	\(r_{j,k}=r_{j+\frac12}^-\), we set
	\begin{equation}\label{eq:Y-ode-interface}
		\partial_t Y_{j+\frac12}^{-}
		= -\,\frac{\kappa\, r_{j+\frac12}}{8}\,
		\sqrt{\frac{\widehat A_{j+\frac12}}{B\!\bigl(Z_{j+\frac12}\bigr)}}\,
		\frac{\widehat{\mathcal T}^{01}_{j+\frac12}}{1-\widehat A_{j+\frac12}}.
	\end{equation}
	Here, \(\widehat{\mathcal T}^{01}\) and \(\widehat A\) are obtained by an HLL
	construction with signal speeds \(\alpha^L\leq0\leq\alpha^R\) (see \eqref{equ:sLsR}):
	\begin{equation}\label{eq:HLL-Y}
		\resizebox{0.98\hsize}{!}{$
			\widehat{\mathcal T}^{01}
			= \frac{\,\alpha^R (\mathcal T^{01})_R - \alpha^L (\mathcal T^{01})_L
				+ \bigl(\sqrt{A B}\,\mathcal T^{11}\bigr)_L
				- \bigl(\sqrt{A B}\,\mathcal T^{11}\bigr)_R\,}{\alpha^R-\alpha^L},
			\quad
			\widehat A
			= \frac{\,\alpha^R A_R - \alpha^L A_L\,}{\alpha^R-\alpha^L}.$}
	\end{equation}
	with \((\cdot)_L\) and \((\cdot)_R\) denoting the left/right traces at the interface,
	and the metric coefficient  \(B(Z)\) being reconstructed from \(Z\).
	
	\paragraph{Interface continuity of \(Y\) in the final stage}
	Assume \((Y^n_{j+\frac12})^+=(Y^n_{j+\frac12})^-\) holds at time \(t_n\) for all
	interfaces. In the final RK stage, we use the same interface states
	\((A_L,A_R)\), \((\mathcal T^{01}_L,\mathcal T^{01}_R)\),
	and \((\sqrt{AB}\,\mathcal T^{11})_{L/R}\) to compute
	\(\widehat A_{j+\frac12}\) and \(\widehat{\mathcal T}^{01}_{j+\frac12}\) via
	\eqref{eq:HLL-Y} on both sides. Hence the right- and left-trace updates apply
	identical increments in \eqref{eq:Y-ode-interface}, which yields
	\[
	\bigl(Y^{n+1}_{j+\frac12}\bigr)^{-}
	= Y^n_{j+\frac12}
	- \Delta t \sum_{i=0}^{s-1} b_i\,
	\frac{\kappa\, r_{j+\frac12}}{8}\,
	\sqrt{\frac{\widehat A^{\,n,i}_{j+\frac12}}{B^{\,n,i}_{j+\frac12}}}\,
	\frac{\widehat{\mathcal T}^{01\,n,i}_{j+\frac12}}
	{1-\widehat A^{\,n,i}_{j+\frac12}}
	= \bigl(Y^{n+1}_{j+\frac12}\bigr)^{+},
	\]
	so \(Y\) remains continuous across the interface at \(t_{n+1}\).

	\paragraph{Continuous updates for \(Z\)}
	Given a boundary or matching condition $Z_h(t,r_m)$, the solution is determined via the line integral
	\begin{equation}{
		\begin{aligned}
			Z_h(t,r_{j,l}) &= Z_h(t,r_{j,l-1})
			+ \int_{r_{j,l-1}}^{r_{j,l}}
			\left(\frac{1 - A_h(t,x)}{A_h(t,x)\,x}
			+ \frac{\kappa x}{A_h(t,x)}\,\mathcal{T}_h^{11}(t,x)\right)dx
			\\ &\approx Z_h(t,r_{j,l-1})+\sum_{m=0}^{N_Z}\gamma_{j,l}^{(m)}\left(\frac{1 - A_h(t,x_{j,l}^m)}{A_h(t,x_{j,l}^{(m)})\,x_{j,l}^{(m)}}
			+ \frac{\kappa x_{j,l}^{(m)}}{A_h(t,x_{j,l}^{(m)})}\,\mathcal{T}_h^{11}(t,x_{j,l}^{(m)})\right),
		\end{aligned}
	}\end{equation}
	where \(\{(\gamma_{j,l}^{(m)},x_{j,l}^{(m)})\}_{m=1}^{N_Z}\) are Gauss weights and nodes on {\([r_{j,l-1},\,r_{j,l}]\)} with $N_Z = \lceil (k+1)/2\rceil$. This integral formulation uniquely determines $Z_h$. Since the interface value $Z_h(t,r_{j-\frac12})$ is shared by neighboring cells, it automatically enforces continuity across cell interfaces.
	
	\subsection{Constraint preservation for the hydrodynamic variables $\boldsymbol{U}$}
	Constraint preservation is essential for the robustness and physical reliability of the computation, and it is also required for the bijectivity of the conservative-to-primitive mapping. Denote the {CP} operator by $\mathcal{B}$. The combined algorithmic framework for the EE system is
	\begin{subequations}
		\begin{align}\label{eq:bprk}
			\boldsymbol{U}_\sigma^{n} &= \mathcal{B}\,\mathcal{F}_\tau \boldsymbol{U}_h^{n},\\
			\int_{I_j} \boldsymbol{U}_h^{n,i}\!\cdot\!\boldsymbol{\phi}\,{\mathrm{d}r}
			&= \int_{I_j} \boldsymbol{U}_\sigma^{n}\!\cdot\!\boldsymbol{\phi}\,{\mathrm{d}r}
			+ \Delta t \sum_{m=0}^{i-1} a_{im}\,
			\mathcal{L}_j^{\mathrm{loc}}\!\left(\boldsymbol{U}_\sigma^{n,m},A_h^{n,m},B_h^{n,m};\boldsymbol{\phi}\right),\\
			\boldsymbol{U}_\sigma^{n,i} &= \mathcal{B}\,\boldsymbol{U}_h^{n,i},\\
			\int_{I_j} \boldsymbol{U}_h^{n,s}\!\cdot\!\boldsymbol{\phi}\,{\mathrm{d}r}
			&= \int_{I_j} \boldsymbol{U}_\sigma^{n}\!\cdot\!\boldsymbol{\phi}\,{\mathrm{d}r}
			+ \Delta t \sum_{i=0}^{s-1} b_i\,
			\mathcal{L}_j^{\mathrm{DG}}\!\left(\boldsymbol{U}_\sigma^{n,i},A_h^{n,i},B_h^{n,i};\boldsymbol{\phi}\right),\\
			\boldsymbol{U}_h^{n+1} &= \boldsymbol{U}_h^{n,s}.
		\end{align}
	\end{subequations}
	Since the OE procedure does not alter the cell average, the CP property of the cOEDG method is identical to that of the underlying cRKDG method. Our CP strategy is built on the GQL-based flux inequalities in \cref{lem:VG2}, which imply the weak CP property (Lemma~3.1 in~\cite{LiuSunZhang2025}) for the three spatial operators in \eqref{eq:threeoperator}. This differs from the usual SSP-RKDG approach, which writes each stage solution as a convex combination of forward-Euler steps with the DG spatial operator. 
	
	\paragraph{Framework of CP for the EE system following \cite{LiuSunZhang2025}}
	\begin{steps}
		\item We first prove the key GQL-based flux inequalities for the EE system.	
		\item Using these inequalities, we show that the weak CP property holds for the three discretizations
		\begin{equation}\label{eq:threeoperator}\mathcal{L}_j^{\text{DG}}, \quad \pm \mathcal{L}_j^{\text{loc}},\end{equation}
		under the classic local scaling limiter, where the scaling coefficient is computed using only local extremal points.
		\item We then prove that the cell averages at the next stage satisfy the constraint via a convex combination of forward-Euler steps with the three spatial discretizations in \eqref{eq:threeoperator}. This yields a CFL condition under which the cell averages remain admissible.
	\end{steps} 
	This CP methodology for compact RK methods \cite{LiuSunZhang2025} bypasses the SSP order barrier and leads to a four-stage, fourth-order bound-preserving cRKDG method. 
	
	\subsubsection*{GQL-based flux inequalities for the EE system}
	\begin{lemma}[GQL-based flux inequalities]\label{lem:VG2}
		If $\boldsymbol U\in G_c$, then
		\begin{align*}
			\boldsymbol F(\boldsymbol U) \cdot \boldsymbol n > s_1(\boldsymbol U)\,\boldsymbol U \cdot \boldsymbol n,\quad 
			\boldsymbol F(\boldsymbol U) \cdot \boldsymbol n < s_2(\boldsymbol U)\,\boldsymbol U \cdot \boldsymbol n,
		\end{align*}
		where
		\begin{equation}\label{equ:eigenF}
			s_1(\boldsymbol U):=\frac{\,v-\sqrt{P'(\rho)}\,}{1-\sqrt{P'(\rho)}\,v}<0,\quad s_2(\boldsymbol U):=\frac{\,v+\sqrt{P'(\rho)}\,}{1+\sqrt{P'(\rho)}\,v}>0
		\end{equation}
		are the eigenvalues of the Jacobian $\partial {\boldsymbol{F}}/\partial \boldsymbol U$, and $\boldsymbol V(\boldsymbol U)=(\rho,p,v)$.
	\end{lemma}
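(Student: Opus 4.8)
The plan is to pass to primitive variables and reduce the two vector inequalities, for each of the two constraint normals, to elementary scalar inequalities. By \cref{the:G}, $\boldsymbol U\in G_c$ is equivalent to $\boldsymbol V(\boldsymbol U)=(\rho,p,v)\in G_p$, so throughout I may assume $\rho>0$, $p>0$, and $|v|<1$ and argue entirely in terms of $(\rho,p,v)$. Since $G_c$ is cut out by the two normals $\boldsymbol n=(1,1)^\top$ and $\boldsymbol n=(1,-1)^\top$, it suffices to establish both inequalities for each of these. Writing $c_s:=\sqrt{P'(\rho)}$, which lies in $(0,1)$ by the bound $0<P'(\rho)<1$ in \eqref{eq:ass_p}, the first step is to insert \eqref{eq:s_etensor} and $W^2=(1-v^2)^{-1}$ and exploit that the relevant combinations factor through $1\mp v$, cancelling one factor of $W^2$. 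This yields the closed forms
\begin{equation*}
  \boldsymbol U\cdot\boldsymbol n=\frac{\rho+p}{1\mp v}-p,\qquad
  \boldsymbol F\cdot\boldsymbol n=\frac{\rho v\pm p}{1\mp v},
\end{equation*}
where the upper signs correspond to $\boldsymbol n=(1,1)^\top$ and the lower signs to $\boldsymbol n=(1,-1)^\top$.

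Next I would substitute these expressions together with the eigenvalues in \eqref{equ:eigenF} into the two differences $\boldsymbol F\cdot\boldsymbol n-s_1\,\boldsymbol U\cdot\boldsymbol n$ and $s_2\,\boldsymbol U\cdot\boldsymbol n-\boldsymbol F\cdot\boldsymbol n$, clear the denominators $1\mp v$ and $1\mp c_s v$, and simplify. The algebra is routine but the cancellations are delicate: after collecting terms, each of the four differences collapses to a single fraction. Concretely, two of them reduce to
\begin{equation*}
  \frac{(1+v)(c_s\rho+p)}{1-c_s v}\qquad\text{and}\qquad \frac{(1-v)(c_s\rho+p)}{1+c_s v},
\end{equation*}
while the remaining two reduce to
\begin{equation*}
  \frac{(1+v)(c_s\rho-p)}{1+c_s v}\qquad\text{and}\qquad \frac{(1-v)(c_s\rho-p)}{1-c_s v}.
\end{equation*}

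Finally I would read off positivity. Since $|v|<1$ we have $1\pm v>0$, and since $c_s\in(0,1)$ we have $|c_s v|<1$, so $1\pm c_s v>0$; moreover $c_s\rho+p>0$ is immediate from $\rho,p>0$. This settles the first two fractions, hence two of the four inequalities, unconditionally. For the remaining two, the decisive factor is $c_s\rho-p=\sqrt{P'(\rho)}\,\rho-P(\rho)$, which is strictly positive precisely by the third assumption $P(\rho)<\sqrt{P'(\rho)}\,\rho$ in \eqref{eq:ass_p}; this is exactly why that hypothesis is sharp here. I expect the main obstacle to be the bookkeeping in the simplification step, namely arranging the numerators so that they visibly factor as $(1\pm v)(c_s\rho\pm p)$, and recognizing that exactly two of the four inequalities hinge on the nonstandard equation-of-state condition while the other two are automatic. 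Note that the argument nowhere uses the signs of $s_1$ and $s_2$ themselves, so the GQL inequalities hold irrespective of the magnitude of $\boldsymbol U\cdot\boldsymbol n$.
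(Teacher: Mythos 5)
Your proposal is correct and follows essentially the same route as the paper's proof: a direct algebraic verification in primitive variables (using $\boldsymbol U\in G_c \Leftrightarrow (\rho,p,v)\in G_p$), with the four scalar quantities collapsing to exactly the fractions $\frac{(1\pm v)(c_s\rho+p)}{1\mp c_s v}$ and $\frac{(1\pm v)(c_s\rho-p)}{1\pm c_s v}$ that the paper obtains via its factorizations $L_1\pm L_2$ and $R_1\pm R_2$. The only difference is organizational---you dot with $\boldsymbol n$ before simplifying, whereas the paper decomposes the vectors $\boldsymbol F-s_1\boldsymbol U$ and $s_2\boldsymbol U-\boldsymbol F$ first---and your identification of which two inequalities hinge on the assumption $P(\rho)<\sqrt{P'(\rho)}\,\rho$ matches the paper's use of \eqref{eq:ass_p} and \eqref{eq:key_re}.
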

	
	\begin{proof} 
		Let $c_s := \sqrt{P'(\rho)} \in (0, 1)$. Since $|v| < 1$ for any $\boldsymbol{U} \in G_c$, a direct algebraic computation yields the decompositions
		\begin{align}
			\boldsymbol F(\boldsymbol U) - s_1(\boldsymbol U)\boldsymbol U 
			&= \frac{1}{(1-v^2)(1-c_s v)}
			\begin{pmatrix}
				L_1 \\[2pt] L_2
			\end{pmatrix},\nonumber\\
			s_2(\boldsymbol U)\boldsymbol U - \boldsymbol F(\boldsymbol U)
			&= \frac{1}{(1-v^2)(1-c_s v)}
			\begin{pmatrix}
				R_1 \\[2pt] R_2
			\end{pmatrix},\nonumber
		\end{align}
		where
		\begin{align}
			\begin{pmatrix} L_1 \\ L_2 \end{pmatrix}
			:=& (c_s-v)\begin{pmatrix} \rho + p v^2 \\ (\rho+p)v \end{pmatrix}
			\;+\; (1-c_s v)\begin{pmatrix} (\rho+p)v \\ \rho v^2 + p \end{pmatrix},\nonumber\\
			\begin{pmatrix} R_1 \\ R_2 \end{pmatrix}
			:=& (c_s+v)\begin{pmatrix} \rho + p v^2 \\ (\rho+p)v \end{pmatrix}
			\;-\; (1+c_s v)\begin{pmatrix} (\rho+p)v \\ \rho v^2 + p \end{pmatrix}.\nonumber
		\end{align}
		To prove the inequalities, it suffices to verify $L\cdot \boldsymbol{n}>0$ and $R\cdot \boldsymbol{n}>0$. For $L\cdot \boldsymbol{n}$ one checks
		\begin{align*} 
			L_1 \pm  L_2 =& \left(1 \mp v\right)\left(1 \pm v\right)^2\left(c_s\rho \pm p\right).
		\end{align*}
		For $R\cdot \boldsymbol{n}$ we similarly obtain
		\begin{align*} 
			R_1 \pm  R_2 =& (1 \pm v)(1 \mp v)^2\left( c_s \rho\pm p\right). \nonumber
		\end{align*}
		The assumptions \eqref{eq:ass_p} together with the basic inequality \eqref{eq:key_re} guarantee that all factors on the right-hand side are strictly positive.
	\end{proof}
	
	The GQL-based flux inequalities yield the following consequence for the HLL flux.
	
	\begin{corollary}\label{coro:LFsplit}
		Let $(\boldsymbol{U}_i,A_i,B_i)$ with $\boldsymbol U_i\in G_c$ for $i= 1,2$ and let $\widehat{\left(\sqrt{AB}\boldsymbol F\right)}$ be the HLL flux. If $\lambda \max\{-\alpha^L,\alpha^R\}<1$, then
		\begin{align*}\label{equ:coro35}
			\left(\boldsymbol U_2+\lambda\,\widehat{\left(\sqrt{A B} {\boldsymbol{F}}\right)}\!\left(\boldsymbol{U}_{1/2},A_{1/2},B_{1/2}\right)\right)\cdot \boldsymbol{n}&>0,\\
			\left(\boldsymbol U_1-\lambda\,\widehat{\left(\sqrt{A B} {\boldsymbol{F}}\right)}\!\left(\boldsymbol{U}_{2/1},A_{2/1},B_{2/1}\right)\right)\cdot \boldsymbol{n}&>0.
		\end{align*}
	\end{corollary}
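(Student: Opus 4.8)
The plan is to reduce both inequalities to scalar estimates for the $\boldsymbol n$-projections and then close them with the one-sided flux bounds of \cref{lem:VG2}. By \cref{the:G}, membership in $G_c$ is equivalent to $\boldsymbol U\cdot\boldsymbol n>0$ for both $\boldsymbol n=(1,1)^\top$ and $\boldsymbol n=(1,-1)^\top$, so it suffices to establish the two displayed scalar inequalities for each such $\boldsymbol n$. First I would upgrade \cref{lem:VG2} from the physical flux $\boldsymbol F$ to the scaled flux $\sqrt{AB}\,\boldsymbol F$: at a fixed interface $A$ and $B$ are constant in $\boldsymbol U$, so $\partial(\sqrt{AB}\,\boldsymbol F)/\partial\boldsymbol U=\sqrt{AB}\,\partial\boldsymbol F/\partial\boldsymbol U$ has eigenvalues $\sqrt{AB}\,s_{1,2}$. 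Multiplying the inequalities of \cref{lem:VG2} by $\sqrt{AB}>0$ and using $\boldsymbol U_i\cdot\boldsymbol n>0$ together with $\alpha^L\le\sqrt{A_iB}\,s_1(\boldsymbol U_i)<0<\sqrt{A_iB}\,s_2(\boldsymbol U_i)\le\alpha^R$ then gives the two-sided bound
\[
\alpha^L\,(\boldsymbol U_i\cdot\boldsymbol n)\;<\;\sqrt{A_iB}\,\boldsymbol F(\boldsymbol U_i)\cdot\boldsymbol n\;<\;\alpha^R\,(\boldsymbol U_i\cdot\boldsymbol n),\qquad i=1,2,
\]
which is the only consequence of \cref{lem:VG2} I will need; along the way I also record $\alpha^R-\alpha^L>0$, so the HLL denominator is positive.

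Writing $U_i:=\boldsymbol U_i\cdot\boldsymbol n>0$ and $g_i:=\sqrt{A_iB}\,\boldsymbol F(\boldsymbol U_i)\cdot\boldsymbol n$, the bound lets me represent each scaled flux as a strict convex combination $g_i=\theta_i\,\alpha^L U_i+(1-\theta_i)\,\alpha^R U_i$ with $\theta_i\in(0,1)$. I would then insert the explicit HLL flux into each half-update, take the inner product with $\boldsymbol n$, and clear the positive denominator $\alpha^R-\alpha^L$. After substituting the convex-combination representation, the $g_i$-terms cancel telescopically against the $\alpha^L\alpha^R$ dissipation term, and each projected half-update collapses to a manifestly signed expression: the right-state-plus-flux update reduces to $(\alpha^R-\alpha^L)\big[\,U_2\,(1+\lambda\alpha^L\theta_2)+\lambda\alpha^R U_1\,(1-\theta_1)\,\big]$, and the left-state-minus-flux update to $(\alpha^R-\alpha^L)\big[\,U_1\,\big(1-\lambda\alpha^R(1-\theta_1)\big)-\lambda\alpha^L\theta_2\,U_2\,\big]$. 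Since $\theta_i\in(0,1)$ and $\lambda>0$, one has $1+\lambda\alpha^L\theta_2>1-\lambda(-\alpha^L)$ and $1-\lambda\alpha^R(1-\theta_1)>1-\lambda\alpha^R$, so both bracketed quantities are strictly positive exactly under the CFL assumption $\lambda\max\{-\alpha^L,\alpha^R\}<1$. Carrying this out for $\boldsymbol n=(1,\pm1)^\top$ proves the claim.

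The main obstacle is sign bookkeeping rather than any deep difficulty. Because $\alpha^L<0<\alpha^R$ and $\lambda>0$, the one-sided flux bound must be used in precisely the correct direction for each state: applying $g_i<\alpha^R U_i$ where $g_i>\alpha^L U_i$ is needed (or conversely) produces an estimate of the wrong sign in which the CFL factor never appears. The convex-combination substitution is what removes this pitfall, since it carries both bounds at once and makes the cancellation against the HLL dissipation term automatic, leaving a single CFL-controlled coefficient whose positivity is immediate. The only additional care is that the inequalities of \cref{lem:VG2} are strict, so each $\theta_i$ lies in the open interval $(0,1)$ and all resulting estimates are strict—consistent with $G_c$ being an open set.
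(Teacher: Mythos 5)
Your proposal is correct and takes essentially the same route as the paper's own proof: both reduce to the projections onto $\boldsymbol n=(1,\pm1)^\top$, combine \cref{lem:VG2} with the wave-speed definitions \eqref{equ:sLsR} to obtain the two-sided bound $\alpha^L\,\boldsymbol U_i\cdot\boldsymbol n<\sqrt{A_iB_i}\,\boldsymbol F(\boldsymbol U_i)\cdot\boldsymbol n<\alpha^R\,\boldsymbol U_i\cdot\boldsymbol n$, expand the HLL flux, and close under the stated CFL condition. Your strict convex-combination parametrization $g_i=\theta_i\alpha^L U_i+(1-\theta_i)\alpha^R U_i$ is only a bookkeeping variant of the paper's direct one-sided estimates, which discard the positive cross terms and arrive at the same CFL-controlled lower bounds $\left(1+\lambda\alpha^L\right)\boldsymbol U_2\cdot\boldsymbol n$ and $\left(1-\lambda\alpha^R\right)\boldsymbol U_1\cdot\boldsymbol n$.
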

	
	\begin{proof}
		By Lemma~\ref{lem:VG2} we can rewrite \eqref{equ:coro35} as
		\[{
		\begin{aligned}
			&\boldsymbol{U}_2 \cdot \boldsymbol{n} + \frac{\alpha^R \sqrt{A_1 B_1} \boldsymbol{F} + \alpha^L\alpha^R \left( \boldsymbol{U}_2 - \boldsymbol{U}_1 \right)}{\alpha^R - \alpha^L} \cdot \boldsymbol{n} \\
			=& \left( 1 + \frac{\alpha^L \alpha^R}{\alpha^R - \alpha^L}\lambda \right) \boldsymbol{U}_2 \cdot \boldsymbol{n} - \frac{\alpha^L \alpha^R}{\alpha^R - \alpha^L} \lambda \boldsymbol{U}_1 \cdot \boldsymbol{n} + \frac{\alpha^R  }{\alpha^R - \alpha^L}\lambda\sqrt{A_1 B_1} \boldsymbol{F} \cdot \boldsymbol{n}\\
			&- \frac{\alpha^L}{\alpha^R - \alpha^L} \lambda \sqrt{A_2 B_2} \boldsymbol{F} \cdot \boldsymbol{n} \\[0.3em]
			>& \left( 1 + \frac{\alpha^L \alpha^R}{\alpha^R - \alpha^L}\lambda \right) \boldsymbol{U}_2 \cdot \boldsymbol{n} - \frac{\alpha^L \alpha^R}{\alpha^R - \alpha^L} \lambda \boldsymbol{U}_1 \cdot \boldsymbol{n} + \frac{\alpha^R \alpha^L}{\alpha^R - \alpha^L} \lambda \boldsymbol{U}_2 \cdot \boldsymbol{n} \\
			&-\frac{\left(\alpha^L\right)^2}{\alpha^R - \alpha^L} \lambda \boldsymbol{U}_2 \cdot \boldsymbol{n} \\[0.3em]
			=& \left( 1 + \alpha^L\lambda\right)  \boldsymbol{U}_2 \cdot \boldsymbol{n}>0,
		\end{aligned}
		}\]
		and similarly
		\[{
		\begin{aligned}
			&\boldsymbol{U}_1 \cdot \boldsymbol{n} - \frac{\alpha^R \sqrt{A_1 B_1} \boldsymbol{F} + \alpha^L\alpha^R \left( \boldsymbol{U}_2 - \boldsymbol{U}_1 \right)}{\alpha^R - \alpha^L} \cdot \boldsymbol{n}  \\[0.3em]
			=& \left( 1 + \frac{\alpha^L \alpha^R}{\alpha^R - \alpha^L}\lambda \right) \boldsymbol{U}_1 \cdot \boldsymbol{n} - \frac{\alpha^L \alpha^R}{\alpha^R - \alpha^L} \lambda \boldsymbol{U}_2 \cdot \boldsymbol{n} - \frac{\alpha^R  }{\alpha^R - \alpha^L}\lambda\sqrt{A_1 B_1} \boldsymbol{F} \cdot \boldsymbol{n}\\
			&+ \frac{\alpha^L}{\alpha^R - \alpha^L} \lambda \sqrt{A_2 B_2} \boldsymbol{F} \cdot \boldsymbol{n} \\[0.3em]
			>& \left( 1 + \frac{\alpha^L \alpha^R}{\alpha^R - \alpha^L}\lambda \right) \boldsymbol{U}_1 \cdot \boldsymbol{n} - \frac{\alpha^L \alpha^R}{\alpha^R - \alpha^L} \lambda \boldsymbol{U}_2 \cdot \boldsymbol{n} - \frac{\left(\alpha^R\right)^2}{\alpha^R - \alpha^L} \lambda \boldsymbol{U}_2 \cdot \boldsymbol{n} \\[0.3em]
			&+\frac{ \alpha^L \alpha^R}{\alpha^R - \alpha^L} \lambda \boldsymbol{U}_2 \cdot \boldsymbol{n}\\
			=& \left( 1 - \alpha^R\lambda\right) \boldsymbol{U}_1 \cdot \boldsymbol{n}>0.
		\end{aligned}
		}\]
	\end{proof}

	\subsubsection*{Weak CP property}
	As shown in \cref{lem:pointCP}, we apply the classical local scaling limiter based on the minimal point in each cell $I_j$ to enforce the CP property. The limited solution $\widetilde{\mathcal{T}}^{0i}_h$ on cell $I_j$ is defined by
	\begin{equation}\label{eq:def_pcplimit}
		\widetilde{\mathcal{T}}^{0i}_h(r)
		= \theta_j\!\left(\mathcal{T}^{0i}_h(r)-\overline{\mathcal{T}^{0i}_h}\right)
		+ \overline{\mathcal{T}^{0i}_h},\qquad i=0,1,
	\end{equation}
	with
	\[
	\theta_j=\min\!\left\{
	\frac{\varepsilon-\big(\overline{\mathcal{T}^{00}_h}-|\overline{\mathcal{T}^{01}_h}|\big)}
	{\,m_j-\big(\overline{\mathcal{T}^{00}_h}-|\overline{\mathcal{T}^{01}_h}|\big)\,}\,,\ 1
	\right\},
	\quad
	\varepsilon=\min\!\left\{10^{-13},\ \overline{\mathcal{T}^{00}_h}-|\overline{\mathcal{T}^{01}_h}|\right\},
	\]
	and
	\[
	m_j=\min_{r\in I_j}\Big\{\mathcal{T}^{00}_h(r)-\big|\mathcal{T}^{01}_h(r)\big|\Big\}.
	\]
	Here, $\overline{\mathcal{T}^{0i}_h}$ denotes the cell average of $\mathcal{T}^{0i}_h$ on $I_j$. For the second-, third-, and fourth-order CPcOEDG methods, the quantity $m_j$ can be obtained analytically by considering the local extrema and cell boundary points, and thus we omit the details for brevity.
	\begin{lemma}[CP after limiting \label{lem:pointCP}]
		After applying the local scaling limiter \eqref{eq:def_pcplimit}, for every cell $I_j$ and for any point $r \in I_j$, we have 
		\[
		\text{if }  \overline{\mathcal{T}^{00}_h}-|\overline{\mathcal{T}^{01}_h}|\geq \epsilon>0 \text{, then }
		\widetilde{\mathcal{T}}^{00}_h(r)-\big|\widetilde{\mathcal{T}}^{01}_h(r)\big|\;\ge\;\varepsilon\;>\;0.
		\]
	\end{lemma}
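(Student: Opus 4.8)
The plan is to exploit the concave, piecewise-linear structure of the admissibility functional. Writing $\boldsymbol{U}_h=(\mathcal{T}^{00}_h,\mathcal{T}^{01}_h)^\top$ and recalling from \eqref{equ:defGc} that membership in $G_c$ is governed by the two linear forms $\boldsymbol{U}\cdot\boldsymbol{n}$ with $\boldsymbol{n}=(1,\pm1)^\top$, I would introduce the functional
\[
g(\boldsymbol{U}):=\mathcal{T}^{00}-|\mathcal{T}^{01}|=\min_{\boldsymbol{n}\in\{(1,1)^\top,(1,-1)^\top\}}\boldsymbol{U}\cdot\boldsymbol{n},
\]
which is concave as the pointwise minimum of two affine maps; equivalently, each super-level set $\{\boldsymbol{U}:g(\boldsymbol{U})\ge c\}$ is the convex intersection of two half-planes, in line with the convexity of $G_c$ established in \cref{the:G}. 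The target inequality is then precisely $g\bigl(\widetilde{\boldsymbol{U}}_h(r)\bigr)\ge\varepsilon$, and the hypothesis $\overline{\mathcal{T}^{00}_h}-|\overline{\mathcal{T}^{01}_h}|\ge\epsilon>0$ guarantees $\varepsilon=\min\{10^{-13},\,\overline{\mathcal{T}^{00}_h}-|\overline{\mathcal{T}^{01}_h}|\}>0$.

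First I would observe that the scaling limiter \eqref{eq:def_pcplimit}, applying the same factor $\theta_j$ to both components, expresses the limited value as the convex combination
\[
\widetilde{\boldsymbol{U}}_h(r)=\theta_j\,\boldsymbol{U}_h(r)+(1-\theta_j)\,\overline{\boldsymbol{U}}_h,\qquad \theta_j\in[0,1].
\]
Concavity of $g$ then yields the central estimate
\[
g\bigl(\widetilde{\boldsymbol{U}}_h(r)\bigr)\ge \theta_j\,g\bigl(\boldsymbol{U}_h(r)\bigr)+(1-\theta_j)\,g\bigl(\overline{\boldsymbol{U}}_h\bigr)\ge \theta_j\,m_j+(1-\theta_j)\,\overline{g},
\]
where $\overline{g}:=g(\overline{\boldsymbol{U}}_h)=\overline{\mathcal{T}^{00}_h}-|\overline{\mathcal{T}^{01}_h}|$ and the second inequality uses the definition $m_j=\min_{r\in I_j}g(\boldsymbol{U}_h(r))$.

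To control this lower bound I would record the auxiliary inequality $m_j\le\overline{g}$: the cellwise minimum $m_j$ is at most the cell average of $g(\boldsymbol{U}_h)$, and Jensen's inequality for the concave $g$ bounds that average by $g(\overline{\boldsymbol{U}}_h)=\overline{g}$. This shows the denominator $m_j-\overline{g}\le0$ in the formula for $\theta_j$, so $\theta_j$ is well-defined and lies in $[0,1]$; the degenerate case $m_j=\overline{g}$ forces $g(\boldsymbol{U}_h)\equiv m_j=\overline{g}\ge\varepsilon$ and is trivial. I would then split on $\theta_j$. If $\theta_j=1$, the clipping is inactive, which by the definition of $\theta_j$ occurs exactly when $m_j\ge\varepsilon$, whence $g(\widetilde{\boldsymbol{U}}_h)\ge m_j\ge\varepsilon$. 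Otherwise $\theta_j=(\varepsilon-\overline{g})/(m_j-\overline{g})$, and substituting this value into $\theta_j m_j+(1-\theta_j)\overline{g}$ telescopes to exactly $\varepsilon$. In both cases $g(\widetilde{\boldsymbol{U}}_h(r))\ge\varepsilon>0$, as claimed.

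This is essentially the classical Zhang--Shu scaling-limiter estimate, so no step is genuinely difficult; the only points requiring care are the concavity observation, which replaces the usual positivity of a single linear constraint and is what makes the two-sided constraint $\mathcal{T}^{00}-|\mathcal{T}^{01}|>0$ behave like a single admissible functional, and the verification that $\theta_j\in[0,1]$ via $m_j\le\overline{g}$, together with isolating the degenerate constant case so the denominator stays nonzero.
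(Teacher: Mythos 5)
Your proof is correct and follows essentially the same route as the paper's: the paper's triangle-inequality step is exactly your concavity estimate for $g(\boldsymbol{U})=\mathcal{T}^{00}-|\mathcal{T}^{01}|$ applied to the convex combination $\widetilde{\boldsymbol{U}}_h(r)=\theta_j\,\boldsymbol{U}_h(r)+(1-\theta_j)\,\overline{\boldsymbol{U}}_h$, followed by substituting the definition of $\theta_j$. Your extra verifications---that $m_j\le\overline{g}$ guarantees $\theta_j\in[0,1]$ is well defined, and the treatment of the degenerate case $m_j=\overline{g}$---are details the paper leaves implicit, but they do not alter the argument.
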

	\begin{proof}
		By definition of the limiter and the triangle inequality,
\begin{align*}
\widetilde{\mathcal{T}}^{00}_h(r)-\big|\widetilde{\mathcal{T}}^{01}_h(r)\big|
&\geq \theta_j\!\left(\mathcal{T}^{00}_h(r)-\overline{\mathcal{T}^{00}_h}\right)
  + \overline{\mathcal{T}^{00}_h}
  - \Bigl|\theta_j\!\left(\mathcal{T}^{01}_h(r)-\overline{\mathcal{T}^{01}_h}\right)
  + \overline{\mathcal{T}^{01}_h}\Bigr|\\
&\ge \theta_j
  \Bigl[
    \bigl(\mathcal{T}^{00}_h(r)-|\mathcal{T}^{01}_h(r)|\bigr)
    - \bigl(\overline{\mathcal{T}^{00}_h}-|\overline{\mathcal{T}^{01}_h}|\bigr)
  \Bigr]
  + \bigl(\overline{\mathcal{T}^{00}_h}-|\overline{\mathcal{T}^{01}_h}|\bigr)\\
&\geq \epsilon>0.
\end{align*}

		Hence the stated CP property holds at every point $r \in I_j$.
	\end{proof}
	
	\begin{theorem}[Weak CP]\label{thm:weakCP}
		Assume that for every cell \(I_j\), the cell average \(\overline{\boldsymbol U}_h\), all nodal values \(\boldsymbol U_h(r_{j,l})\), and the auxiliary state $\boldsymbol{U}_{j,*}$ defined in \eqref{eq:Ustar} are admissible, and that the node set \(\{r_{j,l}\}\) contains the interfaces \(\{r_{j\pm\tfrac12}\}\) and all Gauss quadrature nodes \(\{r^{\mathrm{GQ}}_{j,l}\}\).  
		Let \(\mu^{\mathrm{DG}}>0\) and \(\mu^{\mathrm{loc}}>0\) be the method-dependent constants determined by the RK discretization, and choose any \(\delta\in(0,1)\) such that the source contribution at the cell-average level is admissible:
		\[
		\widetilde{\boldsymbol{U}}_{\boldsymbol{S}}(\delta):=\overline{\boldsymbol U}_h+\frac{\Delta t\,\mu^{\mathrm{DG/loc}}}{1-\delta}\,
		\overline{\boldsymbol S(\boldsymbol U_h,A_h,B_h,r)}\in G_c .
		\]
		Let \(\lambda:=\Delta t/\Delta x\). If the CFL condition 
		{\[
		\max\{\alpha_{j\pm\frac12}^R,\alpha_{j\pm\frac12}^L\}\,\tilde{\lambda}^{\,\mathrm{DG/loc}}_{j\pm\frac12}<1
		\]}
		with
		\[
		\tilde{\lambda}^{\,\mathrm{DG/loc}}_{j-\frac12}
		:=\frac{\lambda\,\mu^{\mathrm{DG/loc}}}{\delta\,\gamma_{j,1}},\qquad
		\tilde{\lambda}^{\,\mathrm{DG/loc}}_{j+\frac12}
		:=\frac{\lambda\,\mu^{\mathrm{DG/loc}}}{\delta\,\gamma_{j,N_{\mathrm{GL}}}},
		\]
		holds for all $j$, then the forward-Euler cell-average updates satisfy
		\begin{equation}\label{eq:defUS}
			\overline{\boldsymbol{U}}^{\mathrm{FE,DG}}_{j}
			=\overline{\boldsymbol{U}}_{h}
			+\lambda\,\mu^{\mathrm{DG}}\,
			\mathcal{L}^{\mathrm{DG}}_{j}\big(\boldsymbol{U}_h,A_h,B_h;\boldsymbol{1}\big)\in G_c,
		\end{equation}
		and
		\[
		\overline{\boldsymbol{U}}^{\mathrm{FE},\pm\mathrm{loc}}_{j}
		=\overline{\boldsymbol{U}}_{h}
		\pm \lambda\,\mu^{\mathrm{loc}}\,
		\mathcal{L}^{\mathrm{loc}}_{j}\big(\boldsymbol{U}_h,A_h,B_h;\boldsymbol{1}\big)\in G_c .
		\]
		Here \(\gamma_{j,l}\) and \(r_{j,l}^{\mathrm{GL}}\) are the GL weights and nodes on \(I_j\) with \(\sum_l\gamma_{j,l}=1\), and the overline denotes the cell average on \(I_j\).
	\end{theorem}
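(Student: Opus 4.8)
The plan is to follow the Zhang--Shu convex-decomposition strategy, but with the Lax--Friedrichs splitting replaced by the GQL-based flux inequalities of \cref{lem:VG2} and their HLL consequence \cref{coro:LFsplit}, and with the geometric source term peeled off through the parameter $\delta$. Since $G_c$ is convex (\cref{the:G}), it suffices to write each forward-Euler cell average as a convex combination of states that individually lie in $G_c$, equivalently states with strictly positive inner product with both $\boldsymbol n=(1,\pm1)^\top$. I write $\overline{\boldsymbol U}^{\mathrm{FE},\ast}_j$ with $\ast$ standing for either $\mathrm{DG}$ or $\pm\mathrm{loc}$, and let $\mu$ denote the corresponding constant.

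First I would evaluate the spatial operators with the constant test function $\boldsymbol\phi\equiv\boldsymbol 1$. The volume terms drop since $\partial_r\boldsymbol 1=0$, leaving for the DG operator a flux difference $-\bigl(\widehat{(\sqrt{AB}\boldsymbol F)}_{j+\frac12}-\widehat{(\sqrt{AB}\boldsymbol F)}_{j-\frac12}\bigr)$ plus $\int_{I_j}\boldsymbol S\,\mathrm{d}r=\Delta x\,\overline{\boldsymbol S}$, and analogously for $\mathcal L_j^{\mathrm{loc}}$ with the one-sided physical fluxes $\sqrt{AB}\,\boldsymbol F$ evaluated at the interior traces. Thus, using $\lambda\mu\,\Delta x\,\overline{\boldsymbol S}=\Delta t\,\mu\,\overline{\boldsymbol S}$, each update $\overline{\boldsymbol U}^{\mathrm{FE},\ast}_j$ reduces to $\overline{\boldsymbol U}_h$, a flux increment with coefficient $\lambda\mu$, and a source increment $\Delta t\,\mu\,\overline{\boldsymbol S}$.

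Next I would split off the source through $\delta$: writing $\overline{\boldsymbol U}^{\mathrm{FE},\ast}_j=\delta\,\boldsymbol A+(1-\delta)\,\widetilde{\boldsymbol U}_{\boldsymbol S}(\delta)$, where the source block $\widetilde{\boldsymbol U}_{\boldsymbol S}(\delta)=\overline{\boldsymbol U}_h+\tfrac{\Delta t\,\mu}{1-\delta}\overline{\boldsymbol S}$ is admissible by hypothesis, and the convection block is $\boldsymbol A=\overline{\boldsymbol U}_h-\tfrac{\lambda\mu}{\delta}\bigl(\boldsymbol G_{j+\frac12}-\boldsymbol G_{j-\frac12}\bigr)$ with $\boldsymbol G$ the relevant numerical or physical interface flux. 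By convexity it remains to show $\boldsymbol A\in G_c$. I would then expand $\overline{\boldsymbol U}_h$ by the Gauss--Lobatto quadrature $\overline{\boldsymbol U}_h=\sum_l\gamma_{j,l}\boldsymbol U_h(r^{\mathrm{GL}}_{j,l})$, legitimate because the assumed node set contains the interfaces and all Gauss nodes so the rule is exact on $\mathbb P^k$. The interior nodal values $\boldsymbol U_h(r_{j,l})$ and the auxiliary state $\boldsymbol U_{j,*}$ are admissible by hypothesis, while the two endpoint weights absorb the interface fluxes to form, at $r_{j+\frac12}$, the block $\gamma_{j,N_{\mathrm{GL}}}\bigl(\boldsymbol U_h(r_{j+\frac12}^-)-\tilde\lambda^{\,\ast}_{j+\frac12}\boldsymbol G_{j+\frac12}\bigr)$ with $\tilde\lambda^{\,\ast}_{j+\frac12}=\tfrac{\lambda\mu}{\delta\gamma_{j,N_{\mathrm{GL}}}}$, and the mirror block at $r_{j-\frac12}$.

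Finally I would certify positivity of these endpoint blocks. For the DG operator each block is exactly an HLL forward-Euler half-step, so \cref{coro:LFsplit} gives $\boldsymbol A\cdot\boldsymbol n>0$ as soon as $\tilde\lambda^{\,\mathrm{DG}}_{j\pm\frac12}\max\{-\alpha^L,\alpha^R\}<1$, the stated CFL condition. For $\pm\mathcal L_j^{\mathrm{loc}}$ there is no numerical dissipation, so instead I would apply the strict inequalities of \cref{lem:VG2} directly to the physical flux scaled by $\sqrt{AB}>0$, whose eigenvalues are $\sqrt{AB}\,s_{1,2}$: the right block obeys $\boldsymbol U\cdot\boldsymbol n-\tilde\lambda^{\mathrm{loc}}\sqrt{AB}\,\boldsymbol F\cdot\boldsymbol n>(1-\tilde\lambda^{\mathrm{loc}}\sqrt{AB}\,s_2)\boldsymbol U\cdot\boldsymbol n$ and the left block yields $(1+\tilde\lambda^{\mathrm{loc}}\sqrt{AB}\,s_1)\boldsymbol U\cdot\boldsymbol n$, both positive under the same bound since $\sqrt{AB}\,s_2\le\alpha^R$ and $-\sqrt{AB}\,s_1\le-\alpha^L$; the sign flip for $-\mathcal L_j^{\mathrm{loc}}$ merely interchanges the two inequalities. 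Combining, $\boldsymbol A\in G_c$ and hence $\overline{\boldsymbol U}^{\mathrm{FE},\ast}_j\in G_c$. The main obstacle is the bookkeeping in the decomposition: choosing the $\delta$-split and matching the endpoint coefficients to exactly $\tilde\lambda^{\,\ast}_{j\pm\frac12}$ so that each boundary block is a genuine admissible half-step, while quarantining the nonlinear geometric source inside the single hypothesis $\widetilde{\boldsymbol U}_{\boldsymbol S}(\delta)\in G_c$. The reliance on \cref{coro:LFsplit} and \cref{lem:VG2} rather than LF splitting, which fails here (cf. \cref{lem:LFviolation}), is precisely what renders the HLL flux admissible.
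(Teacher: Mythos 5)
Your proposal is correct and follows essentially the same route as the paper's proof: the same $\delta$-split quarantining the source into $\widetilde{\boldsymbol U}_{\boldsymbol S}(\delta)$, the same Gauss--Lobatto endpoint blocks with coefficients $\tilde\lambda^{\,\mathrm{DG/loc}}_{j\pm\frac12}$ certified by \cref{coro:LFsplit} for the HLL/DG operator and by \cref{lem:VG2} for $\pm\mathcal{L}^{\mathrm{loc}}_j$, with convexity of $G_c$ closing the argument. The one blemish is your justification of the Gauss--Lobatto expansion of $\overline{\boldsymbol U}_h$: exactness on $\mathbb P^k$ is not implied by the node set containing the interfaces and Gauss points (it depends on the number of GL nodes relative to $k$), but this is immaterial because the admissibility hypothesis on the remainder state $\boldsymbol{U}_{j,*}$ in \eqref{eq:Ustar} --- which you also invoke --- is precisely what the paper uses in place of any exactness claim.
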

	
	\begin{proof}
		We split the cell average into \(\delta\,\overline{\boldsymbol U}_h\) and \((1-\delta)\,\overline{\boldsymbol U}_h\), represent the \(\delta\) portion via GL quadrature, and the \((1-\delta)\) portion via Gauss quadrature. This yields the convex decomposition
		\begin{align*}
			\overline{\boldsymbol{U}}_{h}^{\mathrm{FE},\,\mathrm{DG}/\pm\mathrm{loc}}
			&= \delta\,\gamma_{j,1}\,\boldsymbol{H}_{j,1}^{\mathrm{DG}/\pm\mathrm{loc}}
			+ \delta\,\gamma_{j,N_{\mathrm{GL}}}\,\boldsymbol{H}_{j,N_{\mathrm{GL}}}^{\mathrm{DG}/\pm\mathrm{loc}}
			+ \delta\bigl(1-\gamma_{j,1}-\gamma_{j,N_{\mathrm{GL}}}\bigr)\,	\boldsymbol{U}_{j,*}^{\mathrm{DG}/\pm\mathrm{loc}} \\[-0.2em]
			&\quad + (1-\delta)\widetilde{\boldsymbol{U}}_{\boldsymbol{S}}(\delta),
		\end{align*}
		with endpoint and interior contributions
		\begin{align}
			\boldsymbol{H}_{j,1}^{\mathrm{DG}/\pm\mathrm{loc}}
			&= \boldsymbol{U}_{j,1}^{\mathrm{GL}}
			+ \tilde{\lambda}^{\,\mathrm{DG/loc}}_{j-\frac12}\,
			\widehat{\boldsymbol F}^{\mathrm{DG}/\pm\mathrm{loc}}
			\bigl(\boldsymbol U_{j-\frac12}^{-},\,\boldsymbol U_{j,1}^{\mathrm{GL}}\bigr),\\
			\boldsymbol{H}_{j,N_{\mathrm{GL}}}^{\mathrm{DG}/\pm\mathrm{loc}}
			&= \boldsymbol{U}_{j,N_{\mathrm{GL}}}^{\mathrm{GL}}
			- \tilde{\lambda}^{\,\mathrm{DG/loc}}_{j+\frac12}\,
			\widehat{\boldsymbol F}^{\mathrm{DG}/\pm\mathrm{loc}}
			\bigl(\boldsymbol U_{j,N_{\mathrm{GL}}}^{\mathrm{GL}},\,\boldsymbol U_{j+\frac12}^{+}\bigr),\\
			\boldsymbol{U}_{j,*}^{\text{DG}/\pm\text{loc}} &= \frac{\overline{\boldsymbol{U}}_{h} - \gamma_{j,1}\,\boldsymbol{U}_{j,1}^{\mathrm{GL}} - \gamma_{j,N_{\mathrm{GL}}}\,\boldsymbol{U}_{j,N_{\mathrm{GL}}}^{\mathrm{GL}} }{1-\gamma_{j,1}-\gamma_{j,N_{\mathrm{GL}}}} \in G_c \quad \text{(by the limiting assumption)}.\label{eq:Ustar}
		\end{align}
		Here \(\widehat{\boldsymbol F}^{\mathrm{DG}}\) is the HLL flux, and \(\widehat{\boldsymbol F}^{\pm\mathrm{loc}}\) is the trace of \(\pm\boldsymbol F\) taken from within \(I_j\).
		
		By \cref{lem:VG2} and \cref{coro:LFsplit}, the endpoint states
		\(\boldsymbol{H}_{j,1}^{\mathrm{DG}/\pm\mathrm{loc}}\) and
		\(\boldsymbol{H}_{j,N_{\mathrm{GL}}}^{\mathrm{DG}/\pm\mathrm{loc}}\) are admissible under the condition
	{	\(\max\{\alpha_{j\pm\frac12}^R,\alpha_{j\pm\frac12}^L\}\,\tilde{\lambda}^{\,\mathrm{DG/loc}}_{j\pm\frac12}<1\)}.
		The source contribution is admissible by the choice of \(\delta\).
		Since the weights in the decomposition are nonnegative and sum to one, 
		\(\overline{\boldsymbol{U}}_{h}^{\mathrm{FE},\,\mathrm{DG}/\pm\mathrm{loc}}\in G_c\) by the convexity of $G_c$.
	\end{proof}
	For the admissibility of $\overline{\boldsymbol{U}}_{\boldsymbol{S}}$ in the weak CP property, we have the following lemma, which follows directly from the convexity of $G_c$ and is stated without proof.
	\begin{lemma}\label{lem:USCP}
		If $\overline{\boldsymbol{S}(\boldsymbol{U}_h,A_h,B_h,r)}\in G_c$, we may take $\delta = 1$.
		Otherwise, there exists a unique positive root $\lambda_S$ such that 
		\[
		\mathcal{T}^{00}(\overline{\boldsymbol{U}}_{\boldsymbol{S}}(\lambda_S)) = |\mathcal{T}^{01}(\overline{\boldsymbol{U}}_{\boldsymbol{S}}(\lambda_S))|.
		\]
		For any $\frac{\Delta t \mu^{\mathrm{DG/loc}}}{1-\delta}\in [0,\lambda_S^{-1})$, the convexity of $G_c$ implies 
		\[
		\widetilde{\boldsymbol{U}}_{\boldsymbol{S}}(\delta)\in G_c. 
		\]
	\end{lemma}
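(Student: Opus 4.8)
The plan is to reduce the statement to an elementary one-dimensional question along an affine ray, using that $G_c$ is, by \cref{the:G}, a convex \emph{cone}: it is cut out by the two homogeneous strict inequalities $\boldsymbol U\cdot\boldsymbol n>0$ with $\boldsymbol n_\pm:=(1,\pm1)^\top$, so it is stable under addition of its members and under scaling by positive reals. First I would rewrite $\widetilde{\boldsymbol U}_{\boldsymbol S}(\delta)$ as the value at parameter $\lambda=\Delta t\,\mu^{\mathrm{DG/loc}}/(1-\delta)$ of the affine path $\overline{\boldsymbol U}_{\boldsymbol S}(\lambda):=\overline{\boldsymbol U}_h+\lambda\,\overline{\boldsymbol S}$, $\lambda\ge0$, whose base point $\overline{\boldsymbol U}_{\boldsymbol S}(0)=\overline{\boldsymbol U}_h$ belongs to $G_c$ by the admissibility hypothesis inherited from \cref{thm:weakCP}.

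For the first assertion, assume $\overline{\boldsymbol S}\in G_c$. Then $\lambda\,\overline{\boldsymbol S}\in G_c$ for all $\lambda>0$ by the cone property, and the sum of two members of the convex cone $G_c$ again lies in $G_c$; hence the entire ray stays in $G_c$. Consequently the source term places no finite upper bound on the admissible coefficient, the reserved fraction $1-\delta$ may be driven to $0$, and $\delta=1$ is allowed.

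For the second assertion, assume $\overline{\boldsymbol S}\notin G_c$ and introduce the scalar gauge
\[
g(\lambda):=\min\bigl\{\overline{\boldsymbol U}_{\boldsymbol S}(\lambda)\cdot\boldsymbol n_+,\ \overline{\boldsymbol U}_{\boldsymbol S}(\lambda)\cdot\boldsymbol n_-\bigr\}=\mathcal T^{00}\bigl(\overline{\boldsymbol U}_{\boldsymbol S}(\lambda)\bigr)-\bigl|\mathcal T^{01}\bigl(\overline{\boldsymbol U}_{\boldsymbol S}(\lambda)\bigr)\bigr|,
\]
so that $\overline{\boldsymbol U}_{\boldsymbol S}(\lambda)\in G_c$ is equivalent to $g(\lambda)>0$. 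Each branch of the minimum is affine in $\lambda$, so $g$ is concave and piecewise affine, with $g(0)>0$. Since $\overline{\boldsymbol S}\notin G_c$, at least one branch has slope $\overline{\boldsymbol S}\cdot\boldsymbol n\le0$, and when the exterior is strict this forces $g(\lambda)\to-\infty$; a concave function that starts positive and tends to $-\infty$ crosses zero exactly once, giving the unique root $\lambda_S>0$ with superlevel set $\{g>0\}=[0,\lambda_S)$. Thus any coefficient strictly below $\lambda_S$ keeps $g>0$, i.e.\ $\widetilde{\boldsymbol U}_{\boldsymbol S}(\delta)\in G_c$, which is the desired admissibility window.

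The hard part will not be the generic computation but the degenerate boundary case, which is why the result is a little less immediate than ``convexity of $G_c$'' suggests. If $\overline{\boldsymbol S}$ lies exactly on $\partial G_c$, one branch of $g$ has zero slope and $g$ may stay positive for all $\lambda$; then the crossing occurs only at $+\infty$, no finite $\lambda_S$ exists, and the ray never leaves $G_c$, so this situation should be absorbed into the first assertion ($\delta\to1$). A genuine finite $\lambda_S$ arises precisely when $\min\{\overline{\boldsymbol S}\cdot\boldsymbol n_+,\overline{\boldsymbol S}\cdot\boldsymbol n_-\}<0$. I would also verify that the parametrization is aligned so that the stated window for $\Delta t\,\mu^{\mathrm{DG/loc}}/(1-\delta)$ corresponds to the root $\lambda_S$; once $g$ is seen to be concave and monotone decreasing on $[0,\lambda_S]$, both existence and uniqueness of $\lambda_S$ follow transparently.
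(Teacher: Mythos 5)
Your proof is correct, but there is nothing in the paper to compare it against: the authors state \cref{lem:USCP} explicitly ``without proof,'' asserting only that it ``follows directly from the convexity of $G_c$.'' Your argument---reducing to the scalar gauge $g(\lambda)=\min\{\overline{\boldsymbol U}_{\boldsymbol S}(\lambda)\cdot\boldsymbol n_+,\ \overline{\boldsymbol U}_{\boldsymbol S}(\lambda)\cdot\boldsymbol n_-\}$, which is concave, piecewise affine, and positive at $\lambda=0$, then splitting according to the signs of the slopes $\overline{\boldsymbol S}\cdot\boldsymbol n_\pm$---is precisely the one-dimensional convexity argument the authors invoke, carried out in full. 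Two of your side remarks deserve emphasis. First, the degenerate case you isolate is a genuine imprecision in the lemma as stated: if $\overline{\boldsymbol S}\in\partial G_c$ (most notably $\overline{\boldsymbol S}=\boldsymbol 0$, a vanishing source average, which the openness of $G_c$ places in the ``Otherwise'' branch), then both slopes are nonnegative, $g$ is nondecreasing, no positive root $\lambda_S$ exists, and the ray never exits $G_c$; absorbing this case into the $\delta=1$ alternative, as you propose, is the correct repair. Second, the parametrization you left unverified is settled by homogeneity: the paper never defines $\overline{\boldsymbol U}_{\boldsymbol S}(\lambda)$, and under your choice $\overline{\boldsymbol U}_{\boldsymbol S}(\lambda)=\overline{\boldsymbol U}_h+\lambda\overline{\boldsymbol S}$ the admissibility window is $[0,\lambda_S)$ rather than the stated $[0,\lambda_S^{-1})$. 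The stated window corresponds instead to defining $\overline{\boldsymbol U}_{\boldsymbol S}(\lambda):=\lambda\overline{\boldsymbol U}_h+\overline{\boldsymbol S}$; since $G_c$ is a cone, $\lambda\overline{\boldsymbol U}_h+\overline{\boldsymbol S}\in G_c$ if and only if $\overline{\boldsymbol U}_h+\lambda^{-1}\overline{\boldsymbol S}\in G_c$ for $\lambda>0$, so your crossing point and the paper's $\lambda_S$ are reciprocals of one another and the two formulations are equivalent. With that identification, your argument establishes the lemma in full and in fact sharpens its statement.
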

	Combining \cref{thm:weakCP} and \cref{lem:USCP}, we can tune $\delta$ to maximize the time step size subject to
	\[
	\frac{\Delta t}{\lambda_S+\max_j\alpha_{j\pm\frac12}\,\tilde{\lambda}^{\,\mathrm{DG/loc}}_{j\pm\frac12}/\Delta t}\leq 1.
	\]
	
	\subsubsection*{Stage cell-average evolution splitting}
	In \cite{LiuSunZhang2025}, the authors show how to write each stage update as a convex combination of forward-Euler steps applied to $\mathcal{L}_j^{\mathrm{DG}}$ and $\mathcal{L}_j^{\pm\mathrm{loc}}$, namely
	\begin{equation}\label{eq:deltat}
		\begin{aligned}
			\overline{\boldsymbol{U}}_h^{n,l+1} &= \sum_{m=0}^l a_{lm}^{\mathrm{DG}}\Bigl(\overline{\boldsymbol{U}}_h^{n,m}+\Delta t\mu_{lm}^{\mathrm{DG}} \mathcal{L}_j^{\mathrm{DG}}({\boldsymbol{U}}_h^{n,m},A_h,B_h,r;\boldsymbol{1})\Bigr)
			\\&\quad+ \sum_{m=0}^la_{lm}^{\mathrm{loc}}\Bigl(\overline{\boldsymbol{U}}_h^{n,m}+\Delta t \mu_{lm}^{\mathrm{loc}} \mathcal{L}_j^{\mathrm{loc}}({\boldsymbol{U}}_h^{n,m},A_h,B_h,r;\boldsymbol{1})\Bigr) 
			\\&\quad+ \sum_{m=0}^l a_{lm}^{\mathrm{-loc}}\Bigl(\overline{\boldsymbol{U}}_h^{n,m}+\Delta t \mu_{lm}^{\mathrm{-loc}} \mathcal{L}_j^{\mathrm{-loc}}({\boldsymbol{U}}_h^{n,m},A_h,B_h,r;\boldsymbol{1})\Bigr),
	\end{aligned}\end{equation}
	with $\sum_{m}(a_{lm}^{\mathrm{DG}}+a_{lm}^{\mathrm{loc}}+a_{lm}^{\mathrm{-loc}}) = 1$. After applying the local scaling limiter, we obtain $\overline{\boldsymbol{U}}_h^{n,l+1}\in G_c$ provided that
	\[
	\frac{\Delta t}{\lambda_S(\boldsymbol{U}_h^{n,m})+\max_{j}\alpha_{j\pm\frac12}\,{{\tilde{\lambda}^{\,\mathrm{DG/loc}}_{j\pm\frac12}(\mu_{lm}^{\mathrm{DG/\pm loc}},\boldsymbol{U}_h^{n,m})}}/\Delta t}\leq 1
	\]
	holds for all $m$. We give concrete examples for the $k$th-order CPcOEDG method with $k = 2,3,4$ in \cref{sec:experiments}, together with the corresponding stage average splittings computed in \cite{LiuSunZhang2025}.
	
	\begin{example}[Second-order CPcOEDG method]\label{ex:2rdCPcOEDG} Following \cite{LiuSunZhang2025,ChenSunXing2024}, 
		the second-order CPcOEDG method can be written as 
		\begin{align*}
			\boldsymbol{U}_\sigma^{n} &= \mathcal{B}\mathcal{F}_{\tau}\boldsymbol{U}_h^n,\\
			\int_{I_j} \boldsymbol{U}_h^{n,1}\!\cdot\!\boldsymbol{\phi}\,{\mathrm{d}r}
			&= \int_{I_j} \boldsymbol{U}_\sigma^{n}\!\cdot\!\boldsymbol{\phi}\,{\mathrm{d}r}
			+ \frac{\Delta t}{2}\,
			\mathcal{L}_j^{\mathrm{loc}}\!\left(\boldsymbol{U}_\sigma^{n,0},A_h^{n,0},B_h^{n,0};\boldsymbol{\phi}\right),\\
			\boldsymbol{U}_\sigma^{n,1}& = \mathcal{B}\boldsymbol{U}_h^{n,1}, \\[0.2em]
			\int_{I_j} \boldsymbol{U}_h^{n+1}\!\cdot\!\boldsymbol{\phi}\,{\mathrm{d}r}
			&= \int_{I_j} \boldsymbol{U}_\sigma^{n}\!\cdot\!\boldsymbol{\phi}\,{\mathrm{d}r} + \Delta t \,
			\mathcal{L}_j^{\mathrm{DG}}\!\left(\boldsymbol{U}_\sigma^{n,1},A_h^{n,1},B_h^{n,1};\boldsymbol{\phi}\right).
		\end{align*}
		Since the first stage is simply a forward-Euler step with the local operator $\mathcal{L}_h^{\mathrm{loc}}$, we only detail the splitting of the last-stage cell average:
		\begin{align*}
			\overline{\boldsymbol{U}}_h^{n+1} =& (1-C)\Bigl[\overline{\boldsymbol{U}}_\sigma^n+\Delta t \frac{C}{2(1-C)}\mathcal{L}_j^{-\mathrm{loc}}(\boldsymbol{U}_{\sigma}^n,A_h^n,B_h^n;\boldsymbol{1})\Bigr]\\
			&\quad+ C\Bigl[ \overline{\boldsymbol{U}}_\sigma^{n,1}+\Delta t\frac{1}{C}\mathcal{L}_j^{\mathrm{DG}}(\boldsymbol{U}_{\sigma}^{n,1},A_h^{n,1},B_h^{n,1};\boldsymbol{1}) \Bigr],\qquad C =\sqrt{3}-1. 
		\end{align*}
	\end{example}
	
	\begin{example}[Third-order CPcOEDG scheme]\label{ex:3rdCPcOEDG} Following \cite{LiuSunZhang2025,ChenSunXing2024}, 
		the third-order CPcOEDG method is given by 
		\begin{align*}
			\boldsymbol{U}_\sigma^{n} &= \mathcal{B}\,\mathcal{F}_{\tau}\,\boldsymbol{U}_h^n,\\[2pt]
			\int_{I_j} \boldsymbol{U}_h^{n,1}\!\cdot\!\boldsymbol{\phi}\,{\mathrm{d}r}
			&= \int_{I_j} \boldsymbol{U}_\sigma^{n}\!\cdot\!\boldsymbol{\phi}\,{\mathrm{d}r}
			+ \frac{\Delta t}{3}\,
			\mathcal{L}_j^{\mathrm{loc}}\!\bigl(\boldsymbol{U}_\sigma^{n,0},A_h^{n,0},B_h^{n,0};\boldsymbol{\phi}\bigr),\\[2pt]
			\int_{I_j} \boldsymbol{U}_h^{n,2}\!\cdot\!\boldsymbol{\phi}\,{\mathrm{d}r}
			&= \int_{I_j} \boldsymbol{U}_\sigma^{n}\!\cdot\!\boldsymbol{\phi}\,{\mathrm{d}r}
			+ \frac{2\Delta t}{3}\,
			\mathcal{L}_j^{\mathrm{loc}}\!\bigl(\mathcal{B}\boldsymbol{U}_h^{n,1},A_h^{n,1},B_h^{n,1};\boldsymbol{\phi}\bigr),\\[2pt]
			\int_{I_j} \boldsymbol{U}_h^{n+1}\!\cdot\!\boldsymbol{\phi}\,{\mathrm{d}r}
			&= \int_{I_j} \boldsymbol{U}_\sigma^{n}\!\cdot\!\boldsymbol{\phi}\,{\mathrm{d}r}
			+ \frac{\Delta t}{4}\,
			\mathcal{L}_j^{\mathrm{DG}}\!\bigl(\boldsymbol{U}_\sigma^{n,0},A_h^{n,0},B_h^{n,0};\boldsymbol{\phi}\bigr)\\
			&\quad+ \frac{3\Delta t}{4}\,
			\mathcal{L}_j^{\mathrm{DG}}\!\bigl(\mathcal{B}\boldsymbol{U}_h^{n,2},A_h^{n,2},B_h^{n,2};\boldsymbol{\phi}\bigr),
		\end{align*}
		where \(\boldsymbol{U}_\sigma^{n,0}\equiv \boldsymbol{U}_\sigma^{n}\). The corresponding cell-average updates admit the convex forms
		\begin{align*}
			\overline{\boldsymbol{U}}_h^{\,n,2}
			&=(1-C_1)\Bigl[\overline{\boldsymbol{U}}_h^{\,n}
			+ \Delta t\,\frac{C_1}{3(1-C_1)}\,
			\mathcal{L}_j^{\mathrm{loc}}\!\bigl(\mathcal{B}\boldsymbol{U}_h^{\,n},A_h^{n},B_h^{n};\boldsymbol{1}\bigr)\Bigr]\\[-1pt]
			&\quad+ C_1\Bigl[\overline{\mathcal{B}\boldsymbol{U}}_h^{\,n,1}
			+ \Delta t\,\frac{2}{3C_1}\,
			\mathcal{L}_j^{\mathrm{loc}}\!\bigl(\mathcal{B}\boldsymbol{U}_h^{\,n,1},A_h^{n,1},B_h^{n,1};\boldsymbol{1}\bigr)\Bigr],
		\end{align*}
		and
		\begin{equation*}
			\resizebox{0.99\hsize}{!}{$\begin{aligned}
					\overline{\boldsymbol{U}}_h^{\,n+1}
					&=(1-C_2-C_3)\Bigl[\hat C\,\overline{\boldsymbol{U}}_h^{\,n}
					+ \Delta t\,\frac{C_2}{3(1-C_2-C_3)\hat C}\,
					\mathcal{L}_j^{\mathrm{loc}}\!\bigl(\mathcal{B}\boldsymbol{U}_h^{\,n},A_h^{n},B_h^{n};\boldsymbol{1}\bigr)\Bigr]\\
					&\quad+(1-C_2-C_3)\Bigl[(1-\hat C)\,\overline{\boldsymbol{U}}_h^{\,n}
					+ \Delta t\,\frac{1}{4(1-C_2-C_3)(1-\hat C)}\,
					\mathcal{L}_j^{\mathrm{DG}}\!\bigl(\mathcal{B}\boldsymbol{U}_h^{\,n},A_h^{n},B_h^{n};\boldsymbol{1}\bigr)\Bigr]\\
					&\quad+ C_2\Bigl[\overline{\mathcal{B}\boldsymbol{U}}_h^{\,n,1}
					+ \Delta t\,\frac{2C_3}{3C_2}\,
					\mathcal{L}_j^{\mathrm{loc}}\!\bigl(\mathcal{B}\boldsymbol{U}_h^{\,n,1},A_h^{n,1},B_h^{n,1};\boldsymbol{1}\bigr)\Bigr]\\
					&\quad+ C_3\Bigl[\overline{\mathcal{B}\boldsymbol{U}}_h^{\,n,2}
					+ \Delta t\,\frac{3}{4C_3}\,
					\mathcal{L}_j^{\mathrm{DG}}\!\bigl(\mathcal{B}\boldsymbol{U}_h^{\,n,2},A_h^{n,2},B_h^{n,2};\boldsymbol{1}\bigr)\Bigr],\end{aligned}$}
		\end{equation*}
		where \(\overline{(\cdot)}\) denotes the cell average over \(I_j\), 
		\((C_1,C_2,C_3)=(0.4764,\;0.2442,\;0.5242)\), and \(\hat C\in[0,1]\) is a free parameter chosen to \emph{minimize}
		\[
		\max\!\left\{\,
		\frac{\lambda^{\mathrm{loc}}\,C_2}{3(1-C_2-C_3)\,\hat C}\;,\;
		\frac{\lambda^{\mathrm{DG}}}{4(1-C_2-C_3)\,(1-\hat C)}
		\,\right\},
		\]
		thereby maximizing the admissible time step. Here \(\lambda^{\mathrm{loc}}\) and \(\lambda^{\mathrm{DG}}\) are the effective CFL prefactors associated with the local and DG operators, respectively.
	\end{example}
	
	\begin{example}[Fourth-order CPcOEDG method]\label{ex:4thCPcOEDG}
		A key advantage of the cRKDG framework is that it bypasses the SSP order barrier: no four-stage, fourth-order SSP RK scheme exists. The fourth-order CPcOEDG method is given in  \cite{LiuSunZhang2025,ChenSunXing2024} and also presented in \cref{app:RK4} for completeness. The stage cell averages
		$\overline{\boldsymbol{U}}_h^{\,n,\ell}$ for $\ell=2,3,4$ split as
		\begin{align*}
			\overline{\boldsymbol{U}}_h^{\,n,2}
			&=(1-C_1)\Bigl[\overline{\boldsymbol{U}}_h^{\,n}
			+ \Delta t\,\frac{C_1}{2(1-C_1)}\,
			\mathcal{L}_j^{-\mathrm{loc}}\!\bigl(\mathcal{B}\boldsymbol{U}_h^{\,n},A_h^{n},B_h^{n};\boldsymbol{1}\bigr)\Bigr]\\
			&\quad+ C_1\Bigl[\overline{\mathcal{B}\boldsymbol{U}}_h^{\,n,1}
			+ \Delta t\,\frac{1}{2C_1}\,
			\mathcal{L}_j^{\mathrm{loc}}\!\bigl(\mathcal{B}\boldsymbol{U}_h^{\,n,1},A_h^{n,1},B_h^{n,1};\boldsymbol{1}\bigr)\Bigr],\\[4pt]
			\overline{\boldsymbol{U}}_h^{\,n,3}
			&=(1-C_2-C_3)\Bigl[\overline{\boldsymbol{U}}_h^{\,n}
			+ \Delta t\,\frac{C_2}{2(1-C_2-C_3)}\,
			\mathcal{L}_j^{-\mathrm{loc}}\!\bigl(\mathcal{B}\boldsymbol{U}_h^{\,n},A_h^{n},B_h^{n};\boldsymbol{1}\bigr)\Bigr]\\
			&\quad+ C_2\Bigl[\overline{\mathcal{B}\boldsymbol{U}}_h^{\,n,1}
			+ \Delta t\,\frac{C_3}{2C_2}\,
			\mathcal{L}_j^{-\mathrm{loc}}\!\bigl(\mathcal{B}\boldsymbol{U}_h^{\,n,1},A_h^{n,1},B_h^{n,1};\boldsymbol{1}\bigr)\Bigr]\\
			&\quad+ C_3\Bigl[\overline{\mathcal{B}\boldsymbol{U}}_h^{\,n,2}
			+ \Delta t\,\frac{1}{C_3}\,
			\mathcal{L}_j^{-\mathrm{loc}}\!\bigl(\mathcal{B}\boldsymbol{U}_h^{\,n,2},A_h^{n,2},B_h^{n,2};\boldsymbol{1}\bigr)\Bigr].
		\end{align*}
		
		The final update can also be written in convex form \cite{LiuSunZhang2025}:
		\begin{equation*}
			\resizebox{0.9999\hsize}{!}{$\begin{aligned}
					\overline{\boldsymbol{U}}_h^{\,n+1}
					&=(1-C_4-C_5-C_6)\Bigl[\overline{\boldsymbol{U}}_h^{\,n}
					+ \Delta t\,\frac{C_4}{2(1-C_4-C_5-C_6)}\,
					\mathcal{L}_j^{-\mathrm{loc}}\!\bigl(\mathcal{B}\boldsymbol{U}_h^{\,n},A_h^{n},B_h^{n};\boldsymbol{1}\bigr)\\[-0.2em]
					&\qquad+ \Delta t\,\frac{1}{6(1-C_4-C_5-C_6)}\,
					\mathcal{L}_j^{\mathrm{DG}}\!\bigl(\mathcal{B}\boldsymbol{U}_h^{\,n},A_h^{n},B_h^{n};\boldsymbol{1}\bigr)\Bigr]\\[0.2em]
					&\quad+ C_4\Bigl[\overline{\mathcal{B}\boldsymbol{U}}_h^{\,n,1}
					+ \Delta t\,\frac{C_5}{2C_4}\,
					\mathcal{L}_j^{-\mathrm{loc}}\!\bigl(\mathcal{B}\boldsymbol{U}_h^{\,n,1},A_h^{n,1},B_h^{n,1};\boldsymbol{1}\bigr)
					+ \Delta t\,\frac{1}{3C_4}\,
					\mathcal{L}_j^{\mathrm{DG}}\!\bigl(\mathcal{B}\boldsymbol{U}_h^{\,n,1},A_h^{n,1},B_h^{n,1};\boldsymbol{1}\bigr)\Bigr]\\
					&\quad+ C_5\Bigl[\overline{\mathcal{B}\boldsymbol{U}}_h^{\,n,2}
					+ \Delta t\,\mathcal{L}_j^{-\mathrm{loc}}\!\bigl(\mathcal{B}\boldsymbol{U}_h^{\,n,2},A_h^{n,2},B_h^{n,2};\boldsymbol{1}\bigr)
					+ \Delta t\,\frac{1}{3C_5}\,
					\mathcal{L}_j^{\mathrm{DG}}\!\bigl(\mathcal{B}\boldsymbol{U}_h^{\,n,2},A_h^{n,2},B_h^{n,2};\boldsymbol{1}\bigr)\Bigr]\\
					&\quad+ C_6\Bigl[\overline{\mathcal{B}\boldsymbol{U}}_h^{\,n,3}
					+ \Delta t\,\frac{1}{6C_6}\,
					\mathcal{L}_j^{\mathrm{DG}}\!\bigl(\mathcal{B}\boldsymbol{U}_h^{\,n,3},A_h^{n,3},B_h^{n,3};\boldsymbol{1}\bigr)\Bigr].\end{aligned}$}
		\end{equation*}
		Here $(C_1,\dots,C_6)=(0.5,\,0.2346,\,0.6850,\,0.3334,\,0.3066,\,0.1142)$.
	\end{example}

	\section{Numerical Experiments}\label{sec:experiments}
	In this section, we validate the performance of the proposed CPcOEDG method through a series of benchmark tests for the spherically symmetric EE system. We employ a barotropic equation of state of the form	
	\begin{equation}\label{eq:rho2p}
		p=\tilde{\sigma}^{2}\rho,\qquad \tilde{\sigma}\in(0,1).
	\end{equation} 
	The tests cover several representative curved spacetimes, including the Schwarzschild, Tolman--Oppenheimer--Volkoff (TOV), and Friedmann--Robertson--Walker (FRW) metrics.
	Unless otherwise stated, we set $\kappa = 8\pi$. While the theoretical CFL condition derived in \eqref{eq:deltat} provides a sufficient condition for constraint preservation, in practice somewhat larger time steps can often be used. In our computations, we adopt CFL numbers of 0.317, 0.169, and 0.05 for the second-, third-, and fourth-order schemes, respectively. 
	
	For time-dependent Dirichlet boundary conditions, prescribing boundary data at intermediate RK stage times \(g\!\left(t_{n}^{l}\right)\) is known to reduce accuracy to second order for general nonlinear systems \cite{Gustafsson1975}. We therefore employ the RK boundary update strategy for time-dependent Dirichlet data proposed in \cite{CarpenterGottliebAbarbanelDon1995}, which in our experiments maintains the intended high-order accuracy. Overall, the tests confirm that the proposed CPcOEDG method is both high-order accurate and effective. 
	
	All computations are implemented in \texttt{C++} with \texttt{long double} precision. For several examples, reference solutions are obtained with a Godunov scheme as in \cite{WuTang2016}.

	\subsubsection*{Boundary treatment for Dirichlet problems}
	For Heun's third-order RK scheme, the stage boundary data are
	\begin{align}
		& b_0^1 = g(t) + \frac{\tau}{3} g'(t), \nonumber\\
		& b_0^2 = g(t) + \frac{2\tau}{3} g'(t) + \frac{2\tau^2}{9} g''(t), \nonumber
	\end{align}
	and for the classical fourth-order RK method,
	\begin{align}
		& b_0^1 = g(t) + \frac{\tau}{2} g'(t), \nonumber\\
		& b_0^2 = g(t) + \frac{\tau}{2} g'(t) + \frac{\tau^2}{4} g''(t), \nonumber\\
		& b_0^3 = g(t) + \tau g'(t) + \frac{\tau^2}{2} g''(t) + \frac{\tau^3}{4} g'''(t).\nonumber
	\end{align}
	Here, $b_0^l$ denotes the $l$th stage approximation of the boundary data corresponding to the exact boundary function $g(t)$.  
	
	\subsection{Accuracy tests}
	We first assess the convergence rates of the proposed CPcOEDG scheme under Dirichlet boundary conditions prescribed by exact solutions. Three fully relativistic smooth models are considered, including two distinct coordinate transformations of the flat FRW metric 
	\begin{equation}\label{eq:FRW}
		\mathrm{d}s^2=-\,\mathrm{d}\tilde t^{\,2}+\tilde t\Big(\mathrm{d}\tilde r^{\,2}+\tilde r^{\,2}\big(\mathrm{d}\theta^{2}+\sin^{2}\theta\,\mathrm{d}\phi^{2}\big)\Big),\quad \tilde t \text{ measured from the Big Bang},
	\end{equation}
	and the TOV metric \cite{Vogler2010}. For each case we report the $L^1$, $L^2$, and $L^\infty$ errors and convergence rates of $\rho$, $v$, $A$, and $B$ obtained by the second-, third-, and fourth-order CPcOEDG methods.

	\begin{exmp}[FRW-1 model]\label{ex:FRW1}
		For the first FRW test, we adopt the coordinate transformation 
		\begin{equation}
			t=\tilde{t}+\frac{\tilde{r}^2}{4}, \quad r=\tilde{r} \sqrt{\tilde{t}}.
		\end{equation}
		This leads to the Schwarzschild-form metric \eqref{eq:metric} with
		\begin{equation}\label{eq:FRW1metric}
			A(t, r)=1-v^2, \quad B(t, r)=\frac{1}{1-v^2}.
		\end{equation}
		The exact solution is
		\begin{equation}\label{sol:FRW1}
			\rho(t, r)=\frac{16 v^2}{3\left(1+\tilde{\sigma}^2\right)^2 \kappa r^2}, \quad v(t, r)=\frac{1-\sqrt{1-\xi^2}}{\xi},\quad \text{with } \tilde{\sigma} = \frac{1}{\sqrt{3}},
		\end{equation}
		where $\xi := \frac{r}{t}$. 
		
		The problem is solved on uniform partitions of the domain \(\Omega=[3,7]\) over \(t\in[15,16]\) using second-, third-, and fourth-order CPcOEDG schemes. Boundary values are prescribed from the exact solution \eqref{sol:FRW1}. Results at $t=16$ for the \(\mathbb{P}^1\), \(\mathbb{P}^2\), and \(\mathbb{P}^3\) CPcOEDG schemes are reported in \cref{tab:TW1P1}--\cref{tab:TW1P3} and confirm high-order accuracy with the expected optimal rates. 
		
		As noted in \cref{rem:OEDGandcOEDG}, we also compare the density errors of the fourth-order CPcOEDG and OEDG schemes in \cref{tab:TW1P3OEDG}. The CPcOEDG method attains the optimal rate, whereas the OEDG scheme exhibits a reduction from the expected fourth order to below third order in the \(L^\infty\) norm.
		
		\begin{table}[htp]\label{tab:TW1P1}
			\centering
			\belowrulesep=0pt
			\aboverulesep=0pt
			\caption{ Errors and convergence rates of the second-order CPcOEDG scheme for \cref{ex:FRW1}.}
			
			\setlength{\tabcolsep}{3mm}
			\begin{tabular}{c|cccccccc}
				\toprule[1.5pt]
				\multirow{2}{*}{} &
				\multirow{2}{*}{$N$} &
				\multicolumn{2}{c}{$L^1$ norm} &
				\multicolumn{2}{c}{$L^2$ norm} &
				\multicolumn{2}{c}{$L^\infty$ norm} \\
				\cmidrule(r){3-4} \cmidrule(r){5-6} \cmidrule(l){7-8}
				& & error & order &  error & order &  error & order \\    
				\midrule[1.5pt]
				\multirow{5}{*}{$\rho$} 
				& 30&	2.52E-09&	--&	1.62E-09&	--&	1.79E-09&	--\\ 
				& 60&	6.01E-10&	2.07&	3.25E-10&	2.32&	3.07E-10&	2.54\\ 
				& 90&	2.64E-10&	2.03&	1.38E-10&	2.12&	1.20E-10&	2.31\\ 
				& 120&	1.47E-10&	2.02&	7.56E-11&	2.08&	6.08E-11&	2.37\\ 
				& 150&	9.40E-11&	2.02&	4.79E-11&	2.04&	3.59E-11&	2.36\\
				\midrule
				\multirow{5}{*}{$v$} 
				& 30&	6.28E-06&	--&	4.29E-06&	--&	4.93E-06&	--\\ 
				& 60&	1.34E-06&	2.23&	8.66E-07&	2.31&	1.05E-06&	2.23\\ 
				& 90&	5.69E-07&	2.12&	3.75E-07&	2.06&	5.04E-07&	1.81\\ 
				& 120&	3.13E-07&	2.08&	2.10E-07&	2.02&	2.95E-07&	1.86\\ 
				& 150&	1.98E-07&	2.06&	1.35E-07&	1.98&	1.94E-07&	1.89\\ 
				\midrule
				\multirow{5}{*}{$A$} 
				& 30&	1.66E-05&	--&	8.40E-06&	--&	5.49E-06&	--\\ 
				& 60&	4.12E-06&	2.01&	2.08E-06&	2.01&	1.36E-06&	2.02\\ 
				& 90&	1.82E-06&	2.01&	9.22E-07&	2.01&	6.01E-07&	2.01\\ 
				& 120&	1.02E-06&	2.00&	5.18E-07&	2.01&	3.37E-07&	2.01\\ 
				& 150&	6.55E-07&	2.00&	3.31E-07&	2.00&	2.16E-07&	2.01\\  
				\midrule
				\multirow{5}{*}{$B$} 
				& 30&	2.88E-05&	--&	1.51E-05&	--&	1.15E-05&	--\\ 
				& 60&	7.17E-06&	2.00&	3.76E-06&	2.00&	2.87E-06&	2.00\\ 
				& 90&	3.19E-06&	2.00&	1.67E-06&	2.00&	1.28E-06&	2.00\\ 
				& 120&	1.79E-06&	2.00&	9.40E-07&	2.00&	7.18E-07&	2.00\\ 
				& 150&	1.15E-06&	2.00&	6.01E-07&	2.00&	4.60E-07&	2.00\\  
				\bottomrule[1.5pt]
			\end{tabular}
		\end{table}
		
		\begin{table}[htp]\label{tab:TW1P2}
			\centering
			\belowrulesep=0pt
			\aboverulesep=0pt
			\caption{Errors and convergence rates of the third-order CPcOEDG method for \cref{ex:FRW1}.}
			
			\setlength{\tabcolsep}{3mm}
			\begin{tabular}{c|cccccccc}
				\toprule[1.5pt]
				\multirow{2}{*}{} &
				\multirow{2}{*}{$N$} &
				\multicolumn{2}{c}{$L^1$ norm} &
				\multicolumn{2}{c}{$L^2$ norm} &
				\multicolumn{2}{c}{$L^\infty$ norm} \\
				\cmidrule(r){3-4} \cmidrule(r){5-6} \cmidrule(l){7-8}
				& & error & order &  error & order &  error & order \\    
				\midrule[1.5pt]
				\multirow{5}{*}{$\rho$} 
				& 30&	8.75E-12&	--&	5.71E-12&	--&	8.62E-12&	--\\ 
				& 60&	1.19E-12&	2.88&	7.49E-13&	2.93&	1.08E-12&	3.00\\ 
				& 90&	3.64E-13&	2.93&	2.25E-13&	2.96&	3.22E-13&	2.98\\ 
				& 120&	1.51E-13&	3.05&	9.29E-14&	3.08&	1.33E-13&	3.09\\ 
				& 150&	7.32E-14&	3.26&	4.52E-14&	3.23&	6.44E-14&	3.23\\  
				\midrule
				\multirow{5}{*}{$v$} 
				& 30&	2.84E-08&	--&	1.80E-08&	--&	2.62E-08&	--\\ 
				& 60&	3.80E-09&	2.90&	2.29E-09&	2.97&	3.37E-09&	2.96\\ 
				& 90&	1.16E-09&	2.93&	6.95E-10&	2.95&	1.01E-09&	2.96\\ 
				& 120&	4.84E-10&	3.04&	2.91E-10&	3.02&	4.19E-10&	3.07\\ 
				& 150&	2.33E-10&	3.27&	1.43E-10&	3.18&	2.02E-10&	3.28\\ 
				\midrule
				\multirow{5}{*}{$A$} 
				& 30&	1.18E-08&	--&	8.23E-09&	--&	9.22E-09&	--\\ 
				& 60&	1.48E-09&	2.99&	1.03E-09&	3.00&	1.17E-09&	2.97\\ 
				& 90&	4.40E-10&	3.00&	3.05E-10&	3.00&	3.49E-10&	2.99\\ 
				& 120&	1.86E-10&	3.00&	1.29E-10&	3.00&	1.48E-10&	2.99\\ 
				& 150&	9.52E-11&	3.00&	6.59E-11&	3.00&	7.58E-11&	2.99\\   
				\midrule
				\multirow{5}{*}{$B$} 
				& 30&	1.99E-08&	--&	1.43E-08&	--&	1.71E-08&	--\\ 
				& 60&	2.48E-09&	3.00&	1.79E-09&	3.00&	2.18E-09&	2.98\\ 
				& 90&	7.35E-10&	3.00&	5.32E-10&	3.00&	6.48E-10&	2.99\\ 
				& 120&	3.10E-10&	3.00&	2.24E-10&	3.00&	2.74E-10&	2.99\\ 
				& 150&	1.59E-10&	3.00&	1.15E-10&	3.00&	1.41E-10&	2.99\\  
				\bottomrule[1.5pt]
			\end{tabular}
		\end{table}
		
		\begin{table}[htp]\label{tab:TW1P3}
			\centering
			\belowrulesep=0pt
			\aboverulesep=0pt
			\caption{Errors and convergence rates of the fourth-order CPcOEDG method for \cref{ex:FRW1}.}
			
			\setlength{\tabcolsep}{3mm}
			\begin{tabular}{c|cccccccc}
				\toprule[1.5pt]
				\multirow{2}{*}{} &
				\multirow{2}{*}{$N$} &
				\multicolumn{2}{c}{$L^1$ norm} &
				\multicolumn{2}{c}{$L^2$ norm} &
				\multicolumn{2}{c}{$L^\infty$ norm} \\
				\cmidrule(r){3-4} \cmidrule(r){5-6} \cmidrule(l){7-8}
				& & error & order &  error & order &  error & order \\    
				\midrule[1.5pt]
				\multirow{5}{*}{$\rho$} 
				& 30&	3.16E-14&	--&	2.46E-14&	--&	4.79E-14&	--\\ 
				& 60&	1.90E-15&	4.06&	1.29E-15&	4.25&	2.43E-15&	4.30\\ 
				& 90&	3.49E-16&	4.18&	2.34E-16&	4.22&	4.35E-16&	4.25\\ 
				& 120&	1.16E-16&	3.83&	7.55E-17&	3.93&	1.37E-16&	4.02\\ 
				& 150&	4.56E-17&	4.18&	2.97E-17&	4.18&	5.37E-17&	4.19\\  
				\midrule
				\multirow{5}{*}{$v$} 
				& 30&	7.35E-11&	--&	4.88E-11&	--&	8.48E-11&	--\\ 
				& 60&	4.96E-12&	3.89&	2.97E-12&	4.04&	5.12E-12&	4.05\\ 
				& 90&	9.65E-13&	4.04&	5.73E-13&	4.06&	1.06E-12&	3.89\\ 
				& 120&	3.16E-13&	3.88&	1.90E-13&	3.83&	3.49E-13&	3.85\\ 
				& 150&	1.28E-13&	4.07&	7.66E-14&	4.07&	1.42E-13&	4.01\\  
				\midrule
				\multirow{5}{*}{$A$} 
				& 30&	4.06E-11&	--&	2.28E-11&	--&	2.86E-11&	--\\ 
				& 60&	2.46E-12&	4.04&	1.38E-12&	4.05&	1.74E-12&	4.04\\ 
				& 90&	4.82E-13&	4.03&	2.69E-13&	4.03&	3.41E-13&	4.02\\ 
				& 120&	1.51E-13&	4.02&	8.46E-14&	4.02&	1.08E-13&	4.01\\ 
				& 150&	6.16E-14&	4.03&	3.45E-14&	4.03&	4.39E-14&	4.02\\   
				\midrule
				\multirow{5}{*}{$B$} 
				& 30&	7.22E-11&	--&	4.15E-11&	--&	5.33E-11&	--\\ 
				& 60&	4.52E-12&	4.00&	2.60E-12&	4.00&	3.34E-12&	4.00\\ 
				& 90&	8.94E-13&	4.00&	5.13E-13&	4.00&	6.61E-13&	4.00\\ 
				& 120&	2.83E-13&	4.00&	1.62E-13&	4.00&	2.09E-13&	4.00\\ 
				& 150&	1.16E-13&	4.00&	6.65E-14&	4.00&	8.58E-14&	4.00\\    
				\bottomrule[1.5pt]
			\end{tabular}
		\end{table}
		
		\begin{table}[htp]\label{tab:TW1P3OEDG}
			\centering
			\belowrulesep=0pt
			\aboverulesep=0pt
			\caption{Errors and convergence rates for the density $\rho$ and velocity $v$ in \cref{ex:FRW1} obtained with the fourth-order OEDG method.}
			\setlength{\tabcolsep}{3mm}
			\begin{tabular}{c|cccccccc}
				\toprule[1.5pt]
				\multirow{2}{*}{} &
				\multirow{2}{*}{$N$} &
				\multicolumn{2}{c}{$L^1$ norm} &
				\multicolumn{2}{c}{$L^2$ norm} &
				\multicolumn{2}{c}{$L^\infty$ norm} \\
				\cmidrule(r){3-4} \cmidrule(r){5-6} \cmidrule(l){7-8}
				& & error & order &  error & order &  error & order \\    
				\midrule[1.5pt]
				\multirow{5}{*}{$\rho$} 
				& 30&	8.83E-11&	--&	5.85E-11&	--&	9.81E-11&	--\\ 
				& 60&	5.98E-12&	3.88&	3.56E-12&	4.04&	5.33E-12&	4.20\\ 
				& 90&	1.19E-12&	3.97&	6.98E-13&	4.02&	1.08E-12&	3.93\\ 
				& 120&	3.81E-13&	3.96&	2.23E-13&	3.97&	3.51E-13&	3.92\\ 
				& 150&	1.57E-13&	3.98&	9.20E-14&	3.96&	2.12E-13&	2.25\\  
				\bottomrule[1.5pt]
			\end{tabular}
		\end{table}
	\end{exmp}
	
	\begin{exmp}[FRW-2 model]\label{ex:FRW2}
		We next consider an alternative coordinate transformation to standard Schwarzschild coordinates,
		\begin{equation}
			t=\frac{\Psi_0}{2} \sqrt{\frac{4 \tilde{t}^2+\tilde{t} \tilde{r}^2}{\tilde{t}}}, \quad r=\tilde{r} \sqrt{\tilde{t}},
		\end{equation}
		applied to the FRW metric \eqref{eq:FRW}. The resulting Schwarzschild metric is again of the form \eqref{eq:metric}, with
		\begin{equation}
			A(t, r)=1-v^2, \quad B(t, r)=\frac{1}{\Psi(t,r)\left(1-v^2\right)},\qquad
			\Psi(t, r)=\Psi_0 \sqrt{\frac{\tilde{t}}{4 \tilde{t}^2+r^2}},
		\end{equation}
		where $\Psi_0 \equiv 1$ in the computations. The constant $\tilde{\sigma}$ in the equation of state is the same as in \eqref{sol:FRW1}. The exact solution is
		\begin{equation}
			(\rho,v)=\left(\frac{4}{3\left(1+\tilde{\sigma}^2\right)^2 \kappa {\tilde{t}}^2},\frac{r}{2 \tilde{t}}\right),\qquad
			\tilde{t}=\frac{t^2+\sqrt{t^4-r^2 \Psi_0^4}}{2 \Psi_0^2}.
		\end{equation}
		
		\cref{tb:3_2,tb:3_3,tb:3_4} display the numerical $L^1$, $L^2$, and $L^\infty$ errors and convergence rates at $t= 16$ of all components $(\rho,v,A,B)$ for the $k$th-order CPcOEDG schemes with $k = 2,3,4$. The results show that the observed convergence rates closely match the theoretical order $k$ for each method and for all three norms, even in the presence of nonhomogeneous Dirichlet boundaries. 
		
		\begin{table}[htp]\label{tb:3_2}
			\centering
			\belowrulesep=0pt
			\aboverulesep=0pt
			\caption{Errors and convergence rates for the second-order CPcOEDG scheme in \cref{ex:FRW2}.}

			\setlength{\tabcolsep}{3mm}
			\begin{tabular}{c|cccccccc}
				\toprule[1.5pt]
				\multirow{2}{*}{} &
				\multirow{2}{*}{$N$} &
				\multicolumn{2}{c}{$L^1$ norm} &
				\multicolumn{2}{c}{$L^2$ norm} &
				\multicolumn{2}{c}{$L^\infty$ norm} \\
				\cmidrule(r){3-4} \cmidrule(r){5-6} \cmidrule(l){7-8}
				& & error & order &  error & order &  error & order \\    
				\midrule[1.5pt]
				\multirow{5}{*}{$\rho$} 
				& 10&	5.00E-13&	--&	2.53E-13&	--&	1.59E-13&	--\\ 
				& 20&	1.25E-13&	2.00&	6.33E-14&	2.00&	4.04E-14&	1.98\\ 
				& 40&	3.12E-14&	2.00&	1.58E-14&	2.00&	1.02E-14&	1.99\\ 
				& 80&	7.81E-15&	2.00&	3.95E-15&	2.00&	2.54E-15&	2.00\\ 
				& 160&	1.95E-15&	2.00&	9.89E-16&	2.00&	6.41E-16&	1.99\\
				\midrule
				\multirow{5}{*}{$v$} 
				& 10&	1.58E-07&	--&	9.12E-08&	--&	8.33E-08&	--\\ 
				& 20&	4.16E-08&	1.93&	2.40E-08&	1.92&	2.36E-08&	1.82\\ 
				& 40&	1.03E-08&	2.01&	5.99E-09&	2.00&	6.26E-09&	1.91\\ 
				& 80&	2.47E-09&	2.06&	1.45E-09&	2.05&	1.58E-09&	1.98\\ 
				& 160&	6.92E-10&	1.83&	3.99E-10&	1.86&	4.26E-10&	1.89\\ 
				\midrule
				\multirow{5}{*}{$A$} 
				& 10&	4.08E-07&	--&	2.04E-07&	--&	1.02E-07&	--\\ 
				& 20&	1.02E-07&	2.00&	5.10E-08&	2.00&	2.55E-08&	2.00\\ 
				& 40&	2.55E-08&	2.00&	1.27E-08&	2.00&	6.37E-09&	2.00\\ 
				& 80&	6.37E-09&	2.00&	3.19E-09&	2.00&	1.59E-09&	2.00\\ 
				& 160&	1.59E-09&	2.00&	7.96E-10&	2.00&	3.98E-10&	2.00\\    
				\midrule
				\multirow{5}{*}{$B$} 
				& 10&	7.19E-04&	--&	3.67E-04&	--&	2.35E-04&	--\\ 
				& 20&	1.80E-04&	2.00&	9.19E-05&	2.00&	5.90E-05&	1.99\\ 
				& 40&	4.50E-05&	2.00&	2.30E-05&	2.00&	1.48E-05&	2.00\\ 
				& 80&	1.12E-05&	2.00&	5.74E-06&	2.00&	3.70E-06&	2.00\\ 
				& 160&	2.81E-06&	2.00&	1.44E-06&	2.00&	9.26E-07&	2.00\\   
				\bottomrule[1.5pt]
			\end{tabular}
		\end{table}
		
		\begin{table}[htp]\label{tb:3_3}
			\centering
			\belowrulesep=0pt
			\aboverulesep=0pt
			\caption{Errors and convergence rates for \cref{ex:FRW2} using the third-order CPcOEDG scheme.}
			
			\setlength{\tabcolsep}{3mm}
			\begin{tabular}{c|cccccccc}
				\toprule[1.5pt]
				\multirow{2}{*}{} &
				\multirow{2}{*}{$N$} &
				\multicolumn{2}{c}{$L^1$ norm} &
				\multicolumn{2}{c}{$L^2$ norm} &
				\multicolumn{2}{c}{$L^\infty$ norm} \\
				\cmidrule(r){3-4} \cmidrule(r){5-6} \cmidrule(l){7-8}
				& & error & order &  error & order &  error & order \\    
				\midrule[1.5pt]
				\multirow{5}{*}{$\rho$} 
				& 4&	7.59E-15&	--&	3.84E-15&	--&	2.22E-15&	--\\ 
				& 6&	2.25E-15&	3.00&	1.14E-15&	3.00&	6.63E-16&	2.98\\ 
				& 8&	9.51E-16&	2.99&	4.80E-16&	3.00&	2.75E-16&	3.06\\ 
				& 10&	4.88E-16&	2.99&	2.45E-16&	3.01&	1.35E-16&	3.18\\ 
				& 12&	2.83E-16&	3.00&	1.42E-16&	3.00&	7.83E-17&	2.99\\ 
				\midrule
				\multirow{5}{*}{$v$} 
				& 4&	1.17E-09&	--&	7.05E-10&	--&	5.94E-10&	--\\ 
				& 6&	3.69E-10&	2.84&	2.24E-10&	2.83&	1.87E-10&	2.85\\ 
				& 8&	1.57E-10&	2.98&	9.54E-11&	2.96&	7.96E-11&	2.97\\ 
				& 10&	7.44E-11&	3.34&	4.53E-11&	3.34&	3.81E-11&	3.30\\ 
				& 12&	4.05E-11&	3.33&	2.46E-11&	3.36&	2.08E-11&	3.33\\ 
				\midrule
				\multirow{5}{*}{$A$} 
				& 4&	5.04E-11&	--&	3.44E-11&	--&	2.95E-11&	--\\ 
				& 6&	1.49E-11&	3.00&	1.02E-11&	3.00&	8.94E-12&	2.95\\ 
				& 8&	6.31E-12&	3.00&	4.31E-12&	3.00&	3.81E-12&	2.96\\ 
				& 10&	3.23E-12&	3.00&	2.21E-12&	3.00&	1.97E-12&	2.96\\ 
				& 12&	1.88E-12&	2.98&	1.28E-12&	3.00&	1.15E-12&	2.95\\   
				\midrule
				\multirow{5}{*}{$B$} 
				& 4&	8.10E-08&	--&	5.30E-08&	--&	4.59E-08&	--\\ 
				& 6&	2.36E-08&	3.04&	1.57E-08&	3.00&	1.39E-08&	2.95\\ 
				& 8&	9.88E-09&	3.03&	6.63E-09&	3.00&	5.92E-09&	2.96\\ 
				& 10&	5.04E-09&	3.02&	3.40E-09&	3.00&	3.05E-09&	2.97\\ 
				& 12&	2.91E-09&	3.02&	1.97E-09&	3.00&	1.77E-09&	2.98\\ 
				\bottomrule[1.5pt]
			\end{tabular}
		\end{table}
		
		\begin{table}[htp]\label{tb:3_4}
			\centering
			\belowrulesep=0pt
			\aboverulesep=0pt
			\caption{Errors and convergence rates for \cref{ex:FRW2} using the fourth-order CPcOEDG method.}
			
			\setlength{\tabcolsep}{3mm}
			\begin{tabular}{c|cccccccc}
				\toprule[1.5pt]
				\multirow{2}{*}{} &
				\multirow{2}{*}{$N$} &
				\multicolumn{2}{c}{$L^1$ norm} &
				\multicolumn{2}{c}{$L^2$ norm} &
				\multicolumn{2}{c}{$L^\infty$ norm} \\
				\cmidrule(r){3-4} \cmidrule(r){5-6} \cmidrule(l){7-8}
				& & error & order &  error & order &  error & order \\    
				\midrule[1.5pt]
				\multirow{5}{*}{$\rho$} 
				& 4&	3.76E-18&	--&	1.89E-18&	--&	1.06E-18&	--\\ 
				& 6&	7.46E-19&	3.99&	3.75E-19&	3.99&	2.11E-19&	3.98\\ 
				& 8&	2.37E-19&	3.99&	1.19E-19&	3.99&	6.71E-20&	3.97\\ 
				& 10&	9.72E-20&	3.99&	4.88E-20&	3.99&	2.75E-20&	4.00\\ 
				& 12&	4.69E-20&	3.99&	2.36E-20&	3.99&	1.33E-20&	3.96\\ 
				\midrule
				\multirow{5}{*}{$v$} 
				& 4&	1.17E-12&	--&	6.61E-13&	--&	5.28E-13&	--\\ 
				& 6&	2.28E-13&	4.02&	1.30E-13&	4.02&	1.05E-13&	3.99\\ 
				& 8&	7.16E-14&	4.03&	4.09E-14&	4.01&	3.35E-14&	3.97\\ 
				& 10&	2.90E-14&	4.05&	1.66E-14&	4.03&	1.41E-14&	3.86\\ 
				& 12&	1.39E-14&	4.04&	8.04E-15&	3.99&	7.03E-15&	3.84\\ 
				\midrule
				\multirow{5}{*}{$A$} 
				& 4&	7.11E-13&	--&	3.73E-13&	--&	2.57E-13&	--\\ 
				& 6&	1.40E-13&	4.01&	7.36E-14&	4.01&	5.08E-14&	4.00\\ 
				& 8&	4.39E-14&	4.03&	2.31E-14&	4.03&	1.59E-14&	4.03\\ 
				& 10&	1.76E-14&	4.11&	9.24E-15&	4.10&	6.53E-15&	4.00\\ 
				& 12&	8.03E-15&	4.30&	4.23E-15&	4.29&	3.16E-15&	3.98\\   
				\midrule
				\multirow{5}{*}{$B$} 
				& 4&	1.09E-09&	--&	5.74E-10&	--&	4.02E-10&	--\\ 
				& 6&	2.15E-10&	4.00&	1.13E-10&	4.00&	7.91E-11&	4.01\\ 
				& 8&	6.82E-11&	4.00&	3.59E-11&	4.00&	2.50E-11&	4.01\\ 
				& 10&	2.79E-11&	4.00&	1.47E-11&	4.00&	1.02E-11&	4.00\\ 
				& 12&	1.35E-11&	4.00&	7.09E-12&	4.00&	4.94E-12&	4.00\\   
				\bottomrule[1.5pt]
			\end{tabular}
		\end{table}
		
	\end{exmp}
	
	\begin{exmp}[TOV model]\label{ex:TOV}
		The TOV model describes static singular isothermal spheres and yields a time-independent metric of the form
		\begin{equation}\label{eq:TOVmetric}
			A(t,r)=1-8\pi\mathcal{G}\,\gamma,\qquad
			B(t,r)=B_0\,r^{\frac{4\tilde{\sigma}^2}{1+\tilde{\sigma}^2}}
			\quad\text{in \eqref{eq:metric}},
		\end{equation}
		where
		\begin{equation}\label{eq:TOVpara}
			\gamma=\frac{1}{2\pi\mathcal{G}}\left(\frac{\tilde{\sigma}^2}{1+6\tilde{\sigma}^2+\tilde{\sigma}^4}\right),\qquad
			\tilde{\sigma}=\frac{1}{\sqrt{3}},\qquad
			B_0=1.
		\end{equation}
		The exact solution is
		\begin{equation}\label{eq:solTOV}
			\rho(t,r)=\frac{\gamma}{r^2},\qquad v(t,r)=0,
		\end{equation}
		with boundary data prescribed as \((\rho(t,r_L),v(t,r_L))\) and \((\rho(t,r_R),v(t,r_R))\).
		
		The simulation runs over \(t\in[15,16]\) on \(\Omega=[3,7]\). The \(L^1\), \(L^2\), and \(L^\infty\) errors for the \(k\)th-order CPcOEDG schemes ($k = 2, 3, 4$) are reported in \cref{tb:4_2}–\cref{tb:4_4}. For comparison, \cref{tb:4_3_altOE} lists the density errors for the fourth-order OEDG scheme, which exhibits only first-order accuracy in all three norms due to the uniform damping coefficients in the OE procedure, as discussed in \cref{rem:OEDGandcOEDG}.
		
		\begin{table}[htp]\label{tb:4_2}
			\centering
			\belowrulesep=0pt
			\aboverulesep=0pt
			\caption{Errors and convergence rates of the second-order CPcOEDG method for \cref{ex:TOV}.}

			\setlength{\tabcolsep}{3mm}
			\begin{tabular}{c|cccccccc}
				\toprule[1.5pt]
				\multirow{2}{*}{} &
				\multirow{2}{*}{$N$} &
				\multicolumn{2}{c}{$L^1$ norm} &
				\multicolumn{2}{c}{$L^2$ norm} &
				\multicolumn{2}{c}{$L^\infty$ norm} \\
				\cmidrule(r){3-4} \cmidrule(r){5-6} \cmidrule(l){7-8}
				& & error & order &  error & order &  error & order \\    
				\midrule[1.5pt]
				\multirow{4}{*}{$\rho$} 
				& 100&	5.60E-08&	--&	5.29E-08&	--&	1.09E-07&	--\\ 
				& 200&	1.32E-08&	2.09&	1.27E-08&	2.06&	2.23E-08&	2.28\\ 
				& 400&	3.18E-09&	2.05&	3.17E-09&	2.00&	5.08E-09&	2.13\\ 
				& 800&	7.81E-10&	2.03&	7.94E-10&	2.00&	1.17E-09&	2.12\\ 
				\midrule
				\multirow{4}{*}{$v$} 
				& 100&	3.09E-05&	--&	2.37E-05&	--&	4.05E-05&	--\\ 
				& 200&	7.38E-06&	2.06&	5.43E-06&	2.12&	8.99E-06&	2.17\\ 
				& 400&	1.67E-06&	2.14&	1.24E-06&	2.13&	2.00E-06&	2.17\\ 
				& 800&	4.33E-07&	1.95&	3.13E-07&	1.98&	4.96E-07&	2.01\\ 
				\midrule
				\multirow{4}{*}{$A$} 
				& 100&	2.04E-06&	--&	1.62E-06&	--&	3.47E-06&	--\\ 
				& 200&	4.74E-07&	2.11&	4.61E-07&	1.82&	9.92E-07&	1.81\\ 
				& 400&	1.17E-07&	2.02&	1.25E-07&	1.88&	2.62E-07&	1.92\\ 
				& 800&	2.96E-08&	1.98&	3.27E-08&	1.94&	6.72E-08&	1.96\\    
				\midrule
				\multirow{4}{*}{$B$} 
				& 100&	4.42E-05&	--&	2.47E-05&	--&	2.14E-05&	--\\ 
				& 200&	9.52E-06&	2.22&	5.39E-06&	2.20&	4.81E-06&	2.15\\ 
				& 400&	2.19E-06&	2.12&	1.25E-06&	2.11&	1.14E-06&	2.08\\ 
				& 800&	5.22E-07&	2.07&	2.99E-07&	2.06&	2.76E-07&	2.04\\  
				\bottomrule[1.5pt]
			\end{tabular}
		\end{table}
		
		\begin{table}[htp]\label{tb:4_3}
			\centering
			\belowrulesep=0pt
			\aboverulesep=0pt
			\caption{Errors and convergence rates of the third-order CPcOEDG method for \cref{ex:TOV}.}
			
			\setlength{\tabcolsep}{3mm}
			\begin{tabular}{c|cccccccc}
				\toprule[1.5pt]
				\multirow{2}{*}{} &
				\multirow{2}{*}{$N$} &
				\multicolumn{2}{c}{$L^1$ norm} &
				\multicolumn{2}{c}{$L^2$ norm} &
				\multicolumn{2}{c}{$L^\infty$ norm} \\
				\cmidrule(r){3-4} \cmidrule(r){5-6} \cmidrule(l){7-8}
				& & error & order &  error & order &  error & order \\    
				\midrule[1.5pt]
				\multirow{4}{*}{$\rho$} 
				& 100&	3.25E-10&	--&	3.27E-10&	--&	6.47E-10&	--\\ 
				& 200&	4.05E-11&	3.00&	4.06E-11&	3.01&	8.11E-11&	3.00\\ 
				& 400&	5.01E-12&	3.02&	5.02E-12&	3.02&	1.01E-11&	3.00\\ 
				& 800&	6.22E-13&	3.01&	6.24E-13&	3.01&	1.26E-12&	3.01\\  
				\midrule
				\multirow{4}{*}{$v$} 
				& 100&	1.14E-07&	--&	1.02E-07&	--&	1.57E-07&	--\\ 
				& 200&	1.49E-08&	2.94&	1.30E-08&	2.97&	2.09E-08&	2.91\\ 
				& 400&	1.79E-09&	3.06&	1.59E-09&	3.02&	2.53E-09&	3.05\\ 
				& 800&	2.24E-10&	3.00&	1.99E-10&	3.00&	3.15E-10&	3.00\\
				\midrule
				\multirow{4}{*}{$A$} 
				& 100&	2.37E-08&	--&	2.18E-08&	--&	4.67E-08&	--\\ 
				& 200&	2.97E-09&	3.00&	2.73E-09&	2.99&	5.99E-09&	2.96\\ 
				& 400&	3.72E-10&	3.00&	3.43E-10&	3.00&	7.56E-10&	2.98\\ 
				& 800&	4.65E-11&	3.00&	4.29E-11&	3.00&	9.50E-11&	2.99\\     
				\midrule
				\multirow{4}{*}{$B$} 
				& 100&	1.53E-07&	--&	8.52E-08&	--&	7.38E-08&	--\\ 
				& 200&	2.00E-08&	2.94&	1.11E-08&	2.94&	9.54E-09&	2.95\\ 
				& 400&	2.54E-09&	2.97&	1.41E-09&	2.98&	1.21E-09&	2.98\\ 
				& 800&	3.21E-10&	2.99&	1.77E-10&	2.99&	1.52E-10&	2.99\\  
				\bottomrule[1.5pt]
			\end{tabular}
		\end{table}
		
		\begin{table}[htp]\label{tb:4_4}
			\centering
			\belowrulesep=0pt
			\aboverulesep=0pt
			\caption{Errors and convergence rates of the fourth-order CPcOEDG method for \cref{ex:TOV}.}
			
			\setlength{\tabcolsep}{3mm}
			\begin{tabular}{c|cccccccc}
				\toprule[1.5pt]
				\multirow{2}{*}{} &
				\multirow{2}{*}{$N$} &
				\multicolumn{2}{c}{$L^1$ norm} &
				\multicolumn{2}{c}{$L^2$ norm} &
				\multicolumn{2}{c}{$L^\infty$ norm} \\
				\cmidrule(r){3-4} \cmidrule(r){5-6} \cmidrule(l){7-8}
				& & error & order &  error & order &  error & order \\    
				\midrule[1.5pt]
				\multirow{4}{*}{$\rho$} 
				& 25&	4.49E-10&	--&	3.80E-10&	--&	5.59E-10&	--\\ 
				& 50&	2.38E-11&	4.24&	2.22E-11&	4.10&	3.46E-11&	4.02\\ 
				& 100&	1.36E-12&	4.13&	1.37E-12&	4.01&	2.20E-12&	3.97\\ 
				& 200&	8.14E-14&	4.06&	8.59E-14&	4.00&	1.39E-13&	3.99\\ 
				\midrule
				\multirow{4}{*}{$v$} 
				& 25&	2.50E-07&	--&	2.45E-07&	--&	5.14E-07&	--\\ 
				& 50&	1.12E-08&	4.48&	1.09E-08&	4.50&	2.33E-08&	4.46\\ 
				& 100&	5.79E-10&	4.28&	5.56E-10&	4.29&	1.15E-09&	4.35\\ 
				& 200&	3.31E-11&	4.13&	3.14E-11&	4.15&	6.19E-11&	4.21\\  
				\midrule
				\multirow{4}{*}{$A$} 
				& 25&	3.31E-08&	--&	3.94E-08&	--&	1.08E-07&	--\\ 
				& 50&	1.59E-09&	4.38&	1.77E-09&	4.48&	5.31E-09&	4.35\\ 
				& 100&	8.71E-11&	4.19&	9.21E-11&	4.26&	2.76E-10&	4.26\\ 
				& 200&	5.10E-12&	4.10&	5.25E-12&	4.13&	1.54E-11&	4.17\\   
				\midrule
				\multirow{4}{*}{$B$} 
				& 25&	3.29E-07&	--&	1.76E-07&	--&	1.25E-07&	--\\ 
				& 50&	8.08E-09&	5.35&	4.28E-09&	5.36&	2.93E-09&	5.42\\ 
				& 100&	1.35E-10&	5.90&	7.27E-11&	5.88&	5.89E-11&	5.64\\ 
				& 200&	5.06E-12&	4.74&	2.69E-12&	4.76&	2.46E-12&	4.58\\   
				\bottomrule[1.5pt]
			\end{tabular}
		\end{table}
		
		\begin{table}[htp]\label{tb:4_3_altOE}
			\centering
			\belowrulesep=0pt
			\aboverulesep=0pt
			\caption{Errors and convergence rates for the density $\rho$ of the {fourth}-order CPcOEDG method with the original OE coefficients of \cite{peng2025oscillation}.}
			\setlength{\tabcolsep}{3mm}
			\begin{tabular}{c|cccccccc}
				\toprule[1.5pt]
				\multirow{2}{*}{} &
				\multirow{2}{*}{$N$} &
				\multicolumn{2}{c}{$L^1$ norm} &
				\multicolumn{2}{c}{$L^2$ norm} &
				\multicolumn{2}{c}{$L^\infty$ norm} \\
				\cmidrule(r){3-4} \cmidrule(r){5-6} \cmidrule(l){7-8}
				& & error & order &  error & order &  error & order \\    
				\midrule[1.5pt]
				\multirow{3}{*}{$\rho$} 
				& 25&	5.12E-05&	--&	3.44E-05&	--&	5.92E-05&	--\\ 
				& 50&	2.62E-05&	0.97&	1.77E-05&	0.96&	3.08E-05&	0.94\\ 
				& 100&	1.31E-05&	1.00&	8.98E-06&	0.98&	1.61E-05&	0.94\\  
				\bottomrule[1.5pt]
			\end{tabular}
		\end{table}
	\end{exmp}
	
	\subsection{Physical fidelity tests}
	Beyond accuracy on smooth solutions, we demonstrate that the proposed method is {CP} and effectively eliminates spurious oscillations through several benchmark problems, including accretion onto a Schwarzschild black hole (\cref{ex:Schwarzschild}), a shock wave model (\cref{ex:shock}), and a time-reversal test (\cref{ex:reverse}).
	
	\begin{exmp}[Accretion onto a Schwarzschild black hole]\label{ex:Schwarzschild}
		We perform a long-time simulation up to \(t=160\) using the fourth-order CPcOEDG scheme to study the steady state for a Schwarzschild black hole of mass $M = 1$ with right-boundary inflow. The strong gravitational field of the Schwarzschild black hole curves the spacetime metric to
		\[
		A(t,r)=B(t,r)=1-\frac{2}{r},\qquad  \Omega = [2.2, 20.2],
		\]
		and the Einstein coupling constant is set to \(\kappa=0\).
		The analytic steady state is
		\begin{equation}\label{eq:steadyflow}
			\rho(r) = \frac{D_0(1-v^2)}{-vr(r-2)}, \quad v(r) = -\sqrt{\varpi(r)}, \quad D_0 = 0.016,
		\end{equation}
		where \(\varpi(r)\) satisfies 
		\begin{equation}\label{eq:st}
			(1-\varpi)\,\varpi^{\frac{\tilde{\sigma}^2}{1-\tilde{\sigma}^2}}
			= \left(1-\frac{2}{r}\right)\!
			\left(\frac{2}{r}\right)^{\frac{4\tilde{\sigma}^2}{1-\tilde{\sigma}^2}},\quad \tilde{\sigma} = 0.1 .
		\end{equation}
		At the right boundary we prescribe \((\rho(r_R),v(r_R))\) from \eqref{eq:steadyflow} with \(r=r_R\), and impose a pure outflow condition at the left boundary. The domain \(\Omega\) is discretized into \(400\) uniform cells. 
		
		Figure~\ref{fig:0} shows that the scheme remains stable over the entire simulation, preserves admissibility even as $v$ approaches the speed of light near the black hole, and accurately captures the steady state.
		
		\begin{figure}[htp]
			\centering
			\label{fig:0}\includegraphics[width=0.48\linewidth]{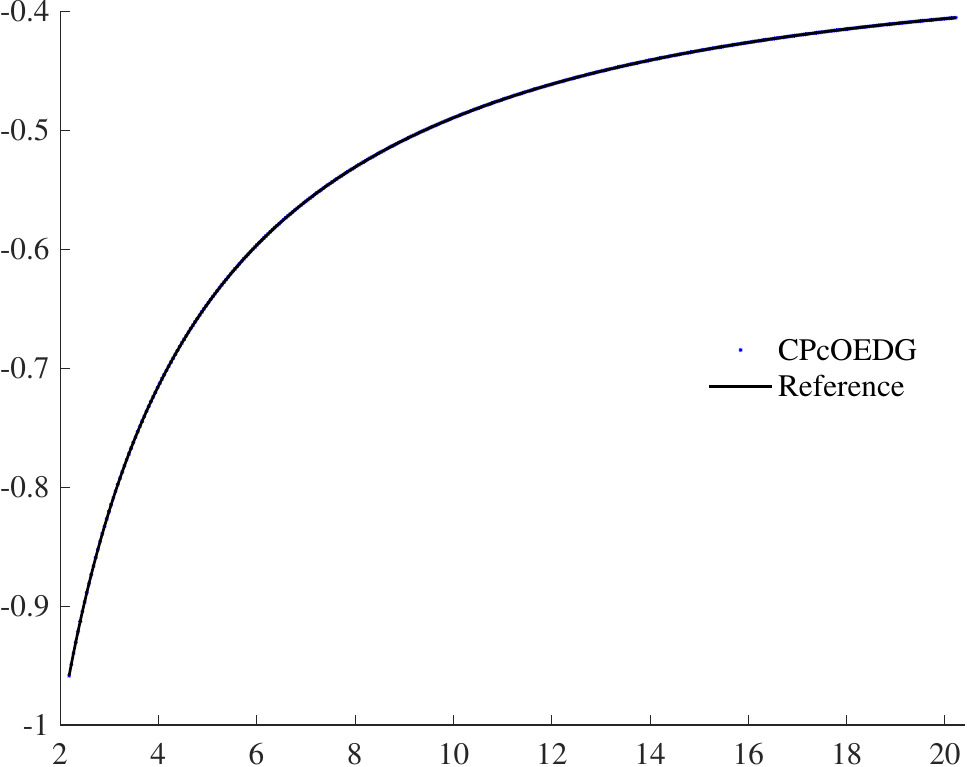}
			\includegraphics[width=0.48\linewidth]{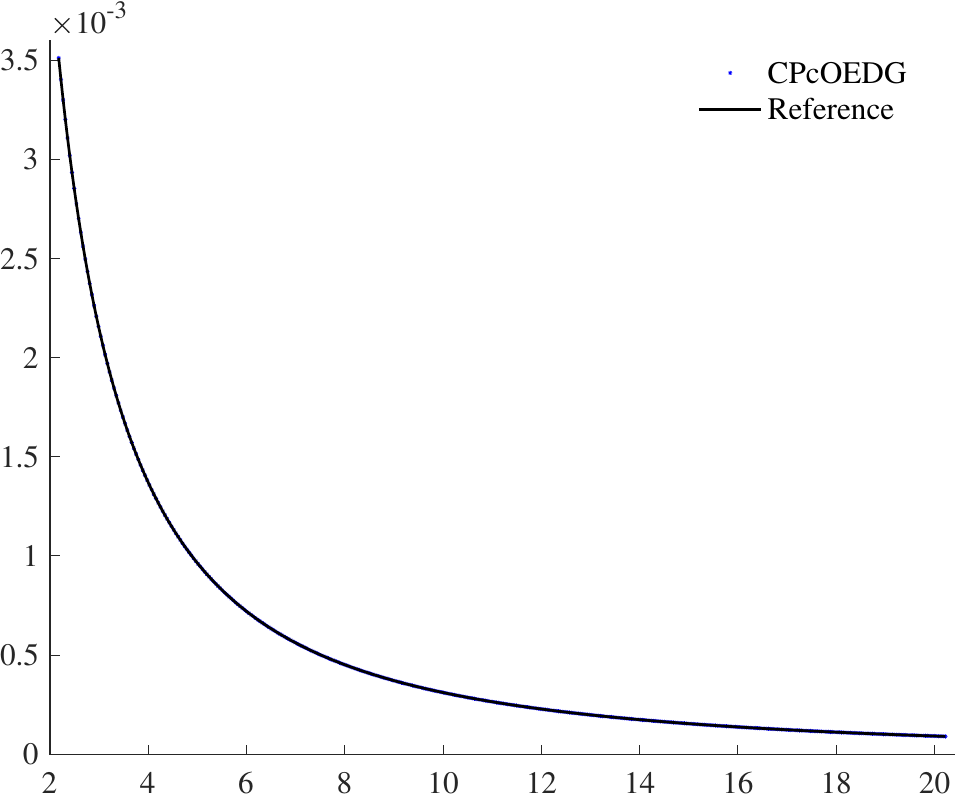}
			\caption{Velocity $v$ (left) and density $\rho$ (right) at $t = 160$ computed by the fourth-order CPcOEDG method for \cref{ex:Schwarzschild}.}
		\end{figure}

	\end{exmp}

	\subsection{Example 4.5: Shock wave model}

	\begin{exmp}[Shock wave model \cite{VoglerTemple2012}]\label{ex:shock}
		We next consider a Riemann-type problem on the domain $\Omega = [3, 7]$. The simulation is initialized at $t = t_0$ and evolved to $t = t_0 + 1$. The initial primitive variables exhibit a discontinuity at $r_0 = 5$. The spacetime is constructed by matching an FRW-1 metric (left) with a TOV metric (right) across the interface $r_0$, in such a way that the metric components are continuous. The initial conditions for the fluid are
		\begin{equation}
			\bigl(\rho(t_0,r),\, v(t_0,r)\bigr)=
			\begin{cases}
				\left(\dfrac{3 v^2}{\kappa r^2},\, \dfrac{1-\sqrt{1-\xi^2}}{\xi}\right), & r<r_0,\\[6pt]
				\left(\dfrac{\gamma}{r^2},\, 0\right), & r>r_0,
			\end{cases}
		\end{equation}
		with $\xi=r/t$ and the same parameter $\gamma$ as in \cref{ex:TOV}.
		
		The piecewise metric coefficients at $t=t_0$ are
		\begin{equation}
			\begin{aligned}
				A(t_0,r) &=
				\begin{cases}
					1-v^2, & r<r_0,\\
					1-8\pi\mathcal{G}\,\gamma, & r>r_0,
				\end{cases}
				\qquad
				B(t_0,r) &=
				\begin{cases}
					\dfrac{1}{1-v^2}, & r<r_0,\\[6pt]
					B_0\, r^{\frac{4\tilde{\sigma}^2}{1+\tilde{\sigma}^2}}, & r>r_0,
				\end{cases}
			\end{aligned}
		\end{equation}
		where $\tilde{\sigma}=1/\sqrt{3}$. Enforcing continuity of the metric at $r_0=5$ fixes
		\[
		v_0:=v(t_0,r_0^-)=\sqrt{8\pi\mathcal{G}\,\gamma},
		\qquad
		t_0=\frac{r_0\bigl(1+v_0^2\bigr)}{2v_0},
		\qquad
		B_0=\frac{r_0^{-\frac{4\tilde{\sigma}^2}{\,1+\tilde{\sigma}^2\,}}}{1-v_0^2}.
		\]
		
		On the spatial boundaries $r_R = 7$ and $r_L = 3$, the primitive variables are set as
		\[
		(\rho_L(t),v_L(t))=\left(\frac{3\,v_L(t)^2}{\kappa r_L^2},\ \frac{1-\sqrt{1-(r_L/t)^2}}{r_L/t}\right), 
		\quad
		(\rho_R(t),v_R(t))=\left(\frac{\gamma}{r_R^2},\ 0\right).
		\]
		
		This setup generates two shock waves: a stronger shock propagating into the TOV region and a weaker shock moving into the FRW-1 region.  
		We compare the density $\rho$ and velocity $v$ computed by the fourth-order CPcOEDG method on $400$ uniform cells with those from Godunov schemes on $10{,}000$ uniform cells at $t=t_0+0.2$, $t_0+0.6$, and $t_0+1$ in \cref{fig:7.2,fig:7.6,fig:7.10}. The scheme maintains CP, sharply resolves the discontinuities, and exhibits no spurious oscillations in the primitive variables---in contrast to conventional CPcRKDG schemes. 
		
		In addition, \cref{fig:7.10AB} shows the metric functions $A_h$ and $B_h$ at $t=t_0+1$, confirming preservation of metric continuity. Overall, both primitive and metric variables remain free of spurious oscillations, demonstrating the {OE} capability of the proposed method for shock-dominated problems.
		
		\begin{figure}[htp]
			\centering
			\label{fig:7.2}\includegraphics[width=0.48\linewidth]{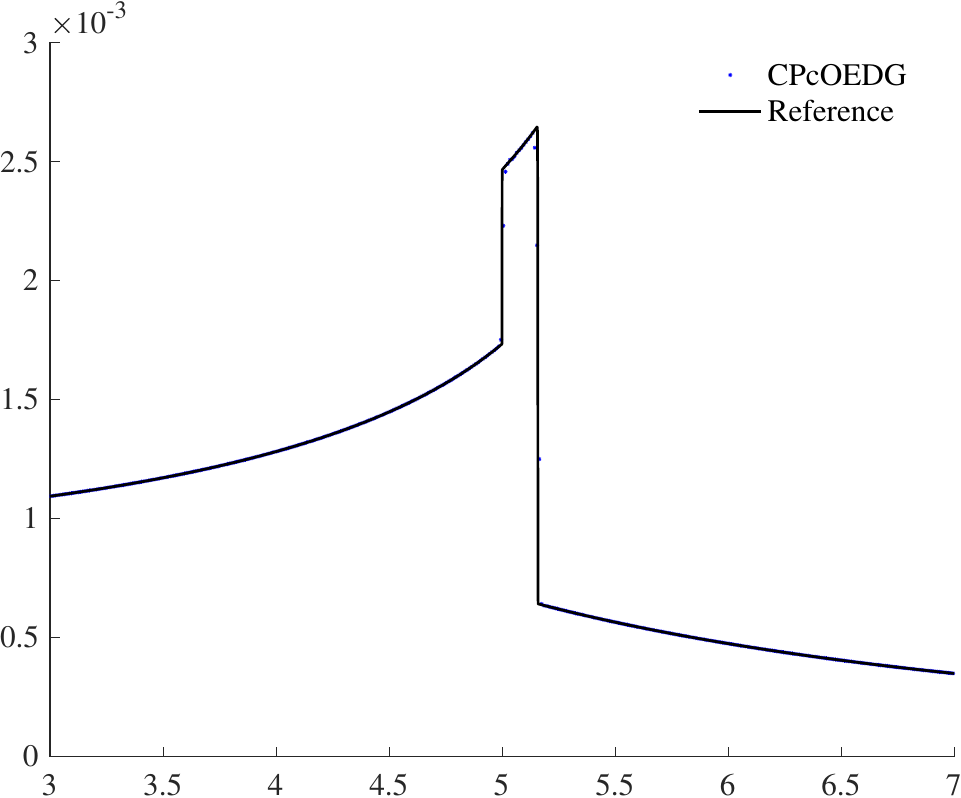}
			\includegraphics[width=0.48\linewidth]{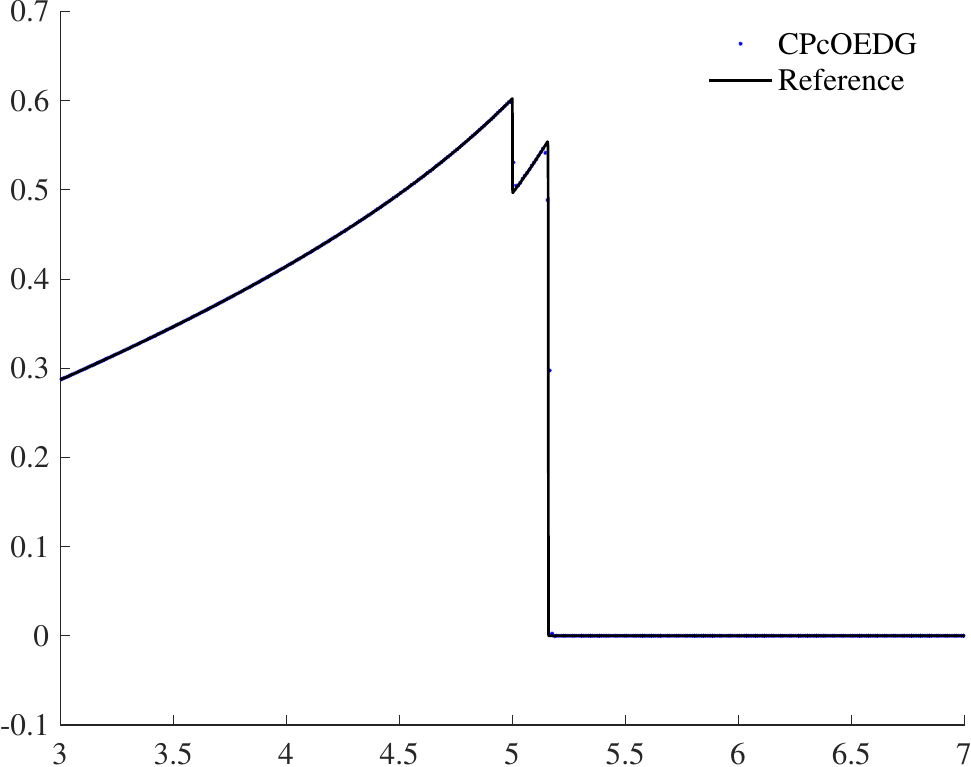}
			\caption{
				Density (left) and velocity (right) for \cref{ex:shock} computed by the fourth-order CPcOEDG method at $t = t_0+0.2$. 
			}
		\end{figure}
		
		\begin{figure}[htp]
			\centering
			\label{fig:7.6}\includegraphics[width=0.48\linewidth]{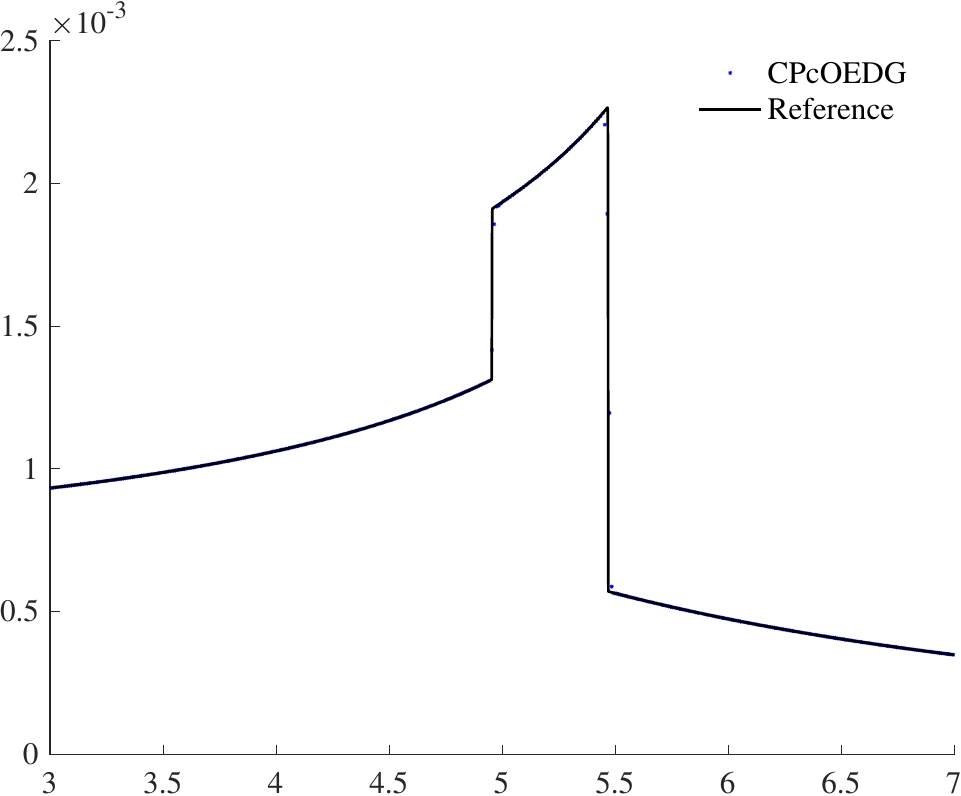}
			\includegraphics[width=0.48\linewidth]{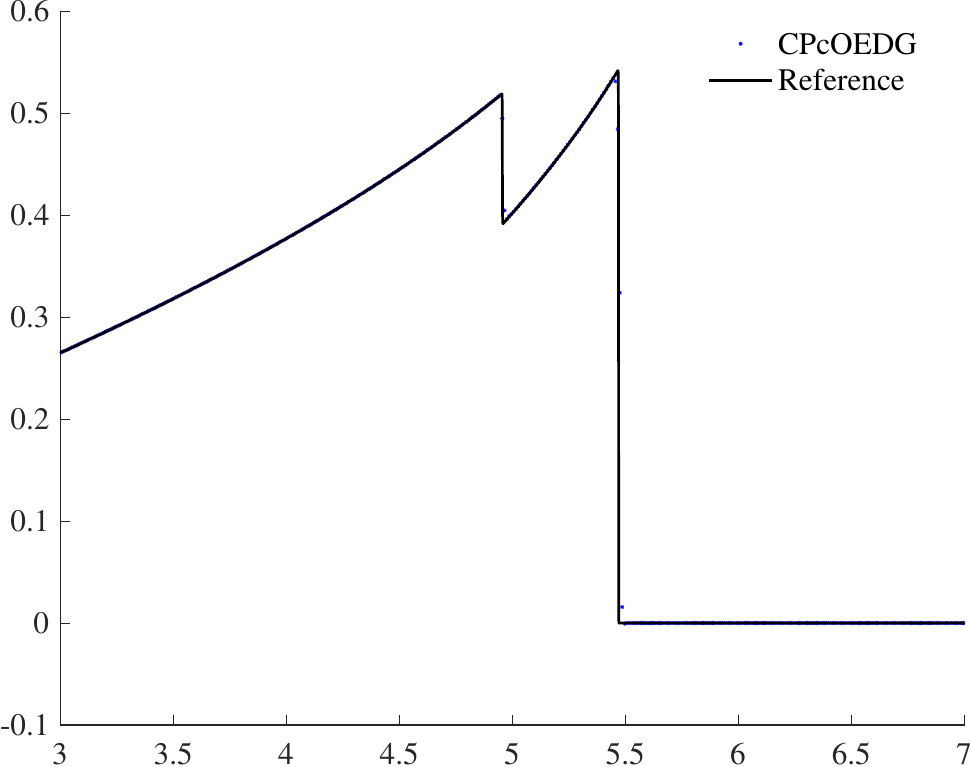}
			\caption{
				Density (left) and velocity (right) for \cref{ex:shock} computed by the fourth-order CPcOEDG method at $t = t_0 + 0.6$.}
		\end{figure}
		
		\begin{figure}[htp]
			\centering
			\label{fig:7.10}\includegraphics[width=0.48\linewidth]{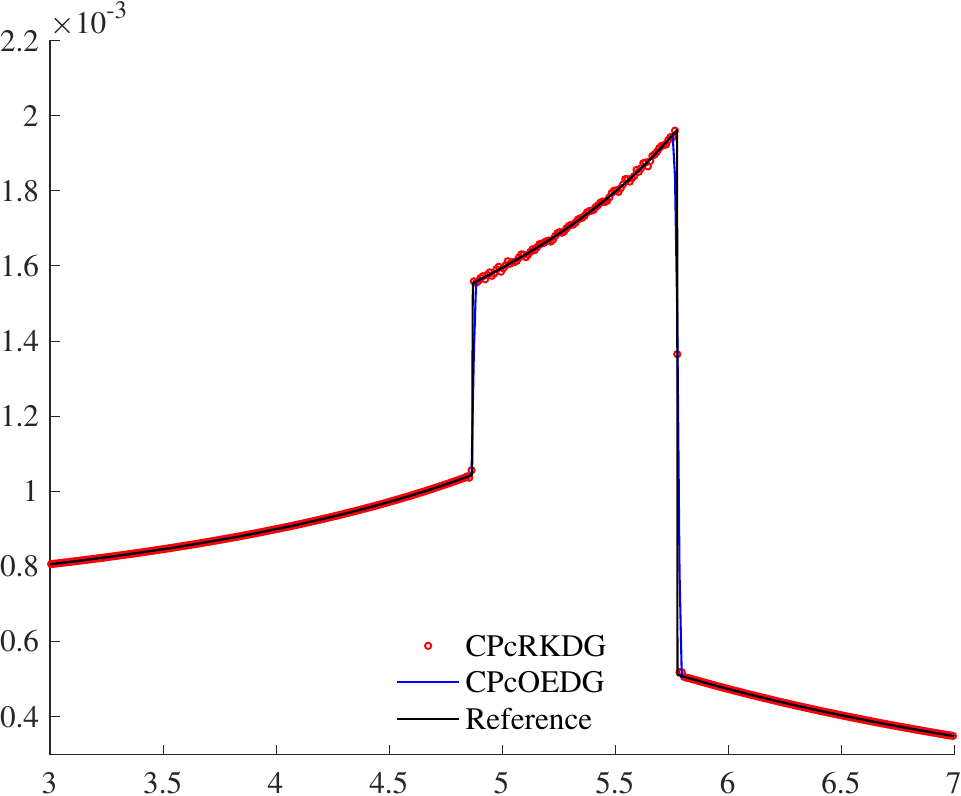}
			\includegraphics[width=0.48\linewidth]{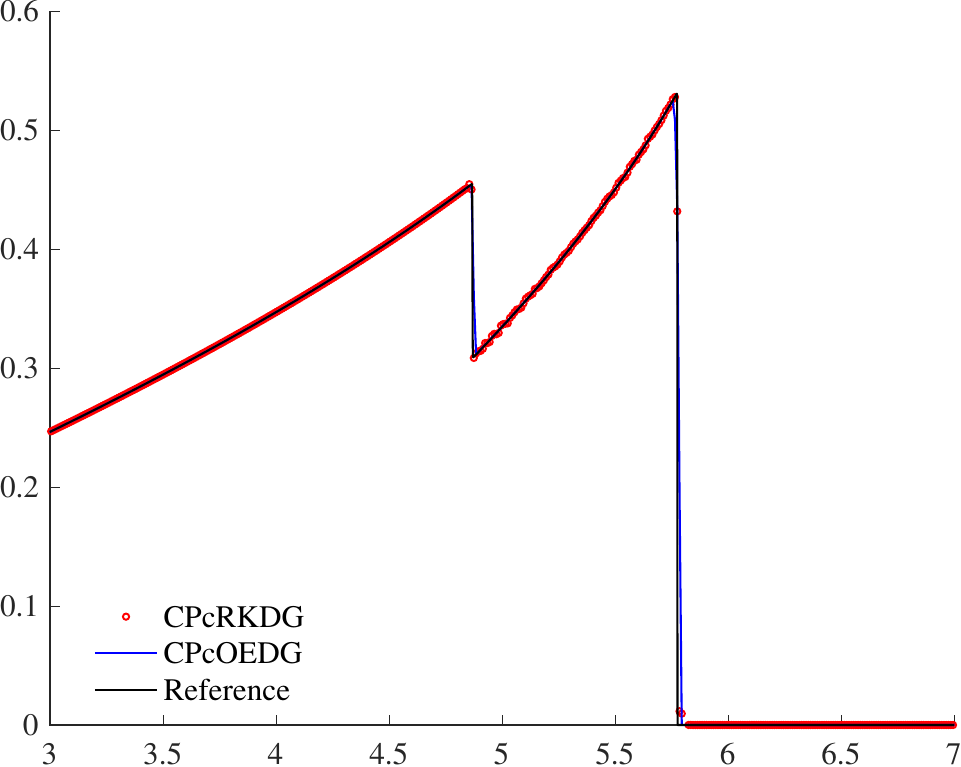}
			\caption{Velocity $v$ (left) and density $\rho$ (right) at $t = t_0 + 1$ obtained with the fourth-order CPcRKDG and CPcOEDG methods for \cref{ex:shock}.} 
		\end{figure}
		
		\begin{figure}[htp]
			\centering
			\label{fig:7.10AB}\includegraphics[width=0.48\linewidth]{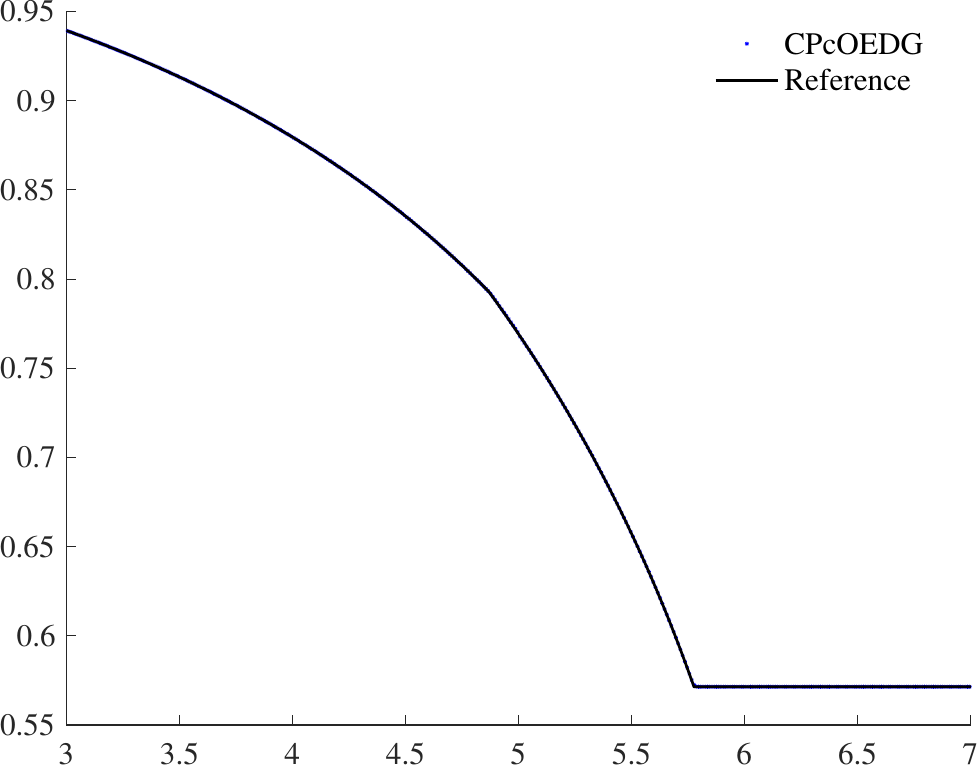}
			\includegraphics[width=0.48\linewidth]{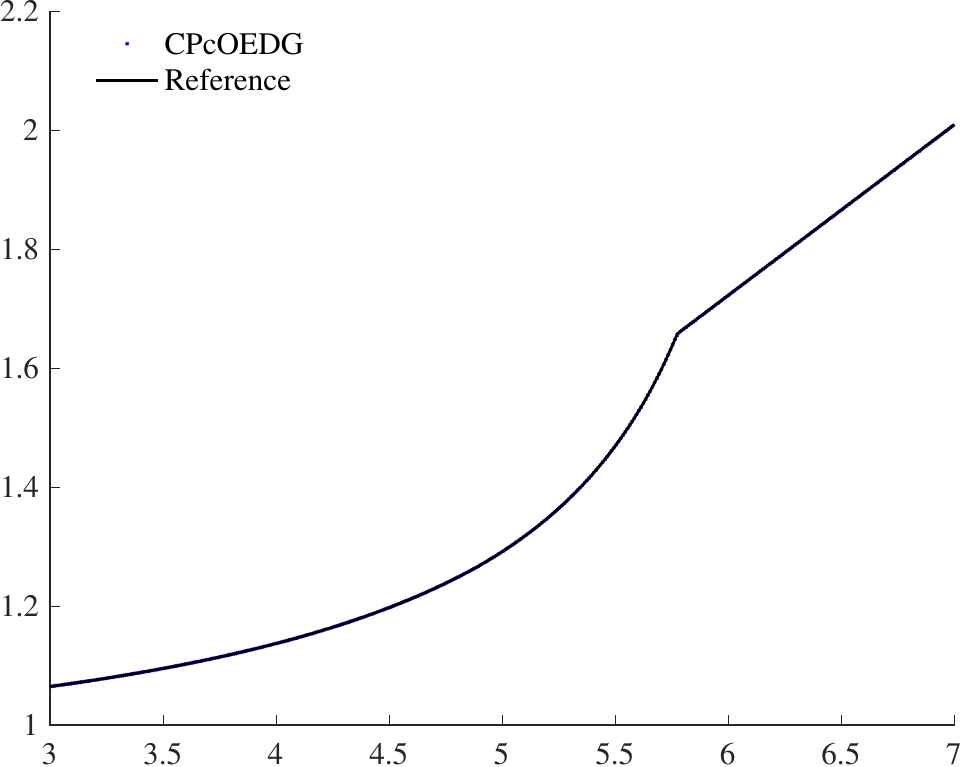}
			\caption{
				Metric variables $A$ (left) and $B$ (right) at $t = t_0 + 1$ for \cref{ex:shock}, computed by the fourth-order CPcOEDG method.}
		\end{figure}
	\end{exmp}

	\begin{exmp}[Time reversal model]\label{ex:reverse}
		Finally, we repeat \cref{ex:shock} with the time direction reversed. The initial conditions are identical to those in \cref{ex:shock}, except that $t_0 = -\frac{r_0(1+v_0^2)}{2v_0}$, which results in the formation of two rarefaction waves at early times.  
		
		Figures~\ref{fig:8.2}, \ref{fig:8.6}, and \ref{fig:8.10} show the numerical solutions of the conservative variables $\mathcal{T}^{00}$ and $\mathcal{T}^{01}$ obtained by the fourth-order CPcOEDG scheme at $t = t_0+0.2$, $t = t_0+0.6$, and $t = t_0+1$, respectively, together with reference solutions from a Godunov scheme on $10{,}000$ uniform cells. 
		
		Figure~\ref{fig:8.10AB} presents the metric functions $A$ and $B$ computed by the CPcOEDG scheme at $t = t_0 + 1$. All numerical results agree closely with the reference solutions, maintain CP, and display no spurious oscillations, thereby further validating the effectiveness of our proposed method.
		
		\begin{figure}[htp]
			\centering
			\label{fig:8.2}\includegraphics[width=0.48\linewidth]{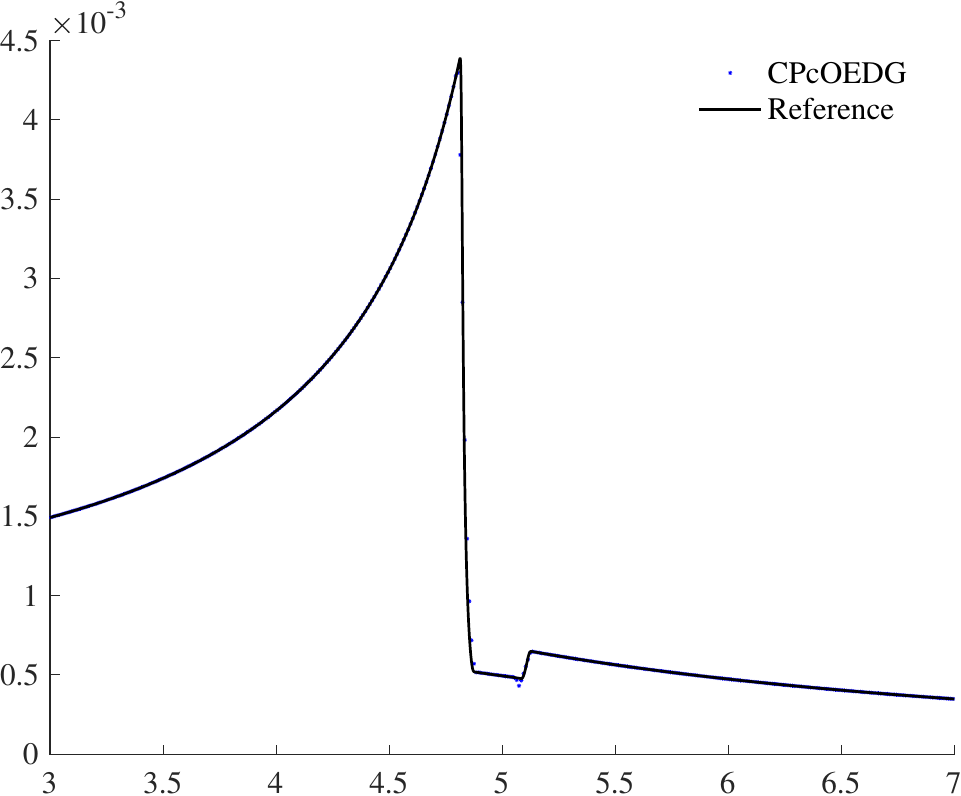}
			\includegraphics[width=0.48\linewidth]{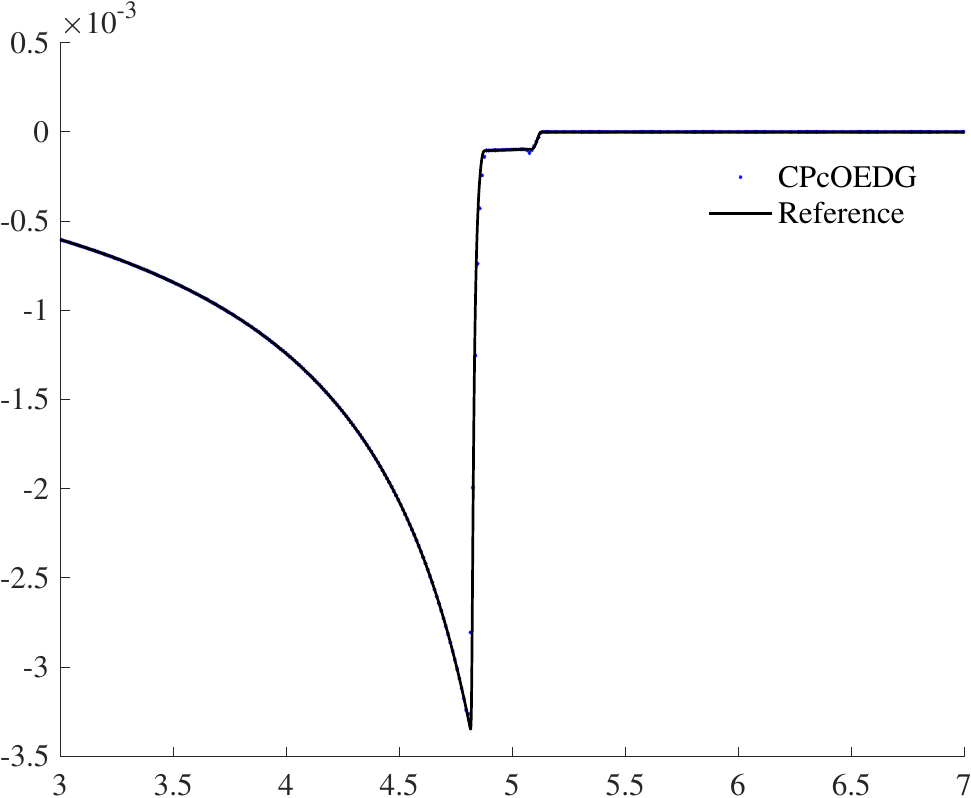}
			\caption{
				Numerical solution of $\mathcal{T}^{00}$ (left) and $\mathcal{T}^{01}$ (right) at $t = t_0+0.2$ obtained with the fourth-order CPcOEDG method for \cref{ex:reverse}.}  
		\end{figure}
		
		\begin{figure}[htp]
			\centering
			\label{fig:8.6}\includegraphics[width=0.48\linewidth]{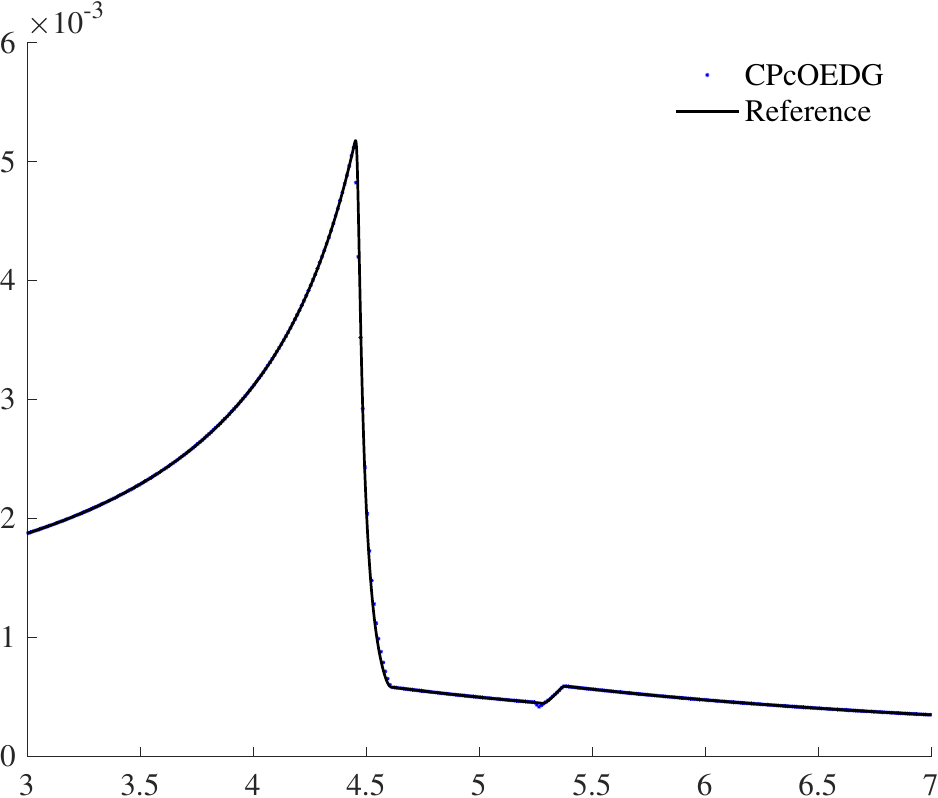}
			\includegraphics[width=0.48\linewidth]{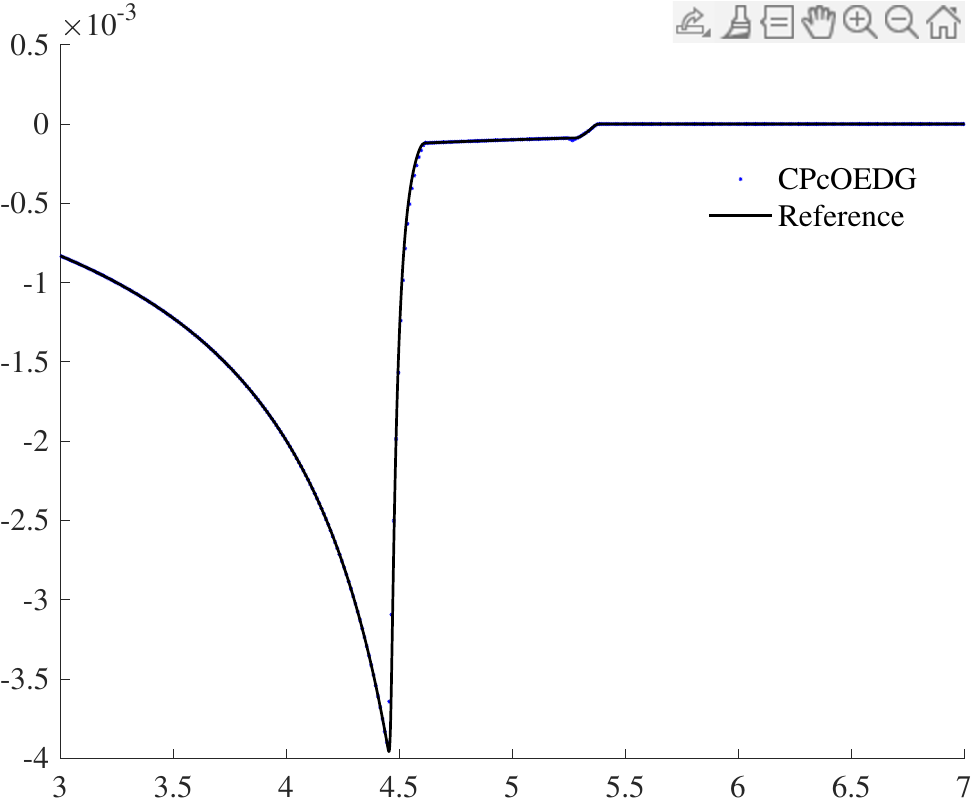}
			\caption{
				Conservative variables $\mathcal{T}^{00}$ (left) and $\mathcal{T}^{01}$ (right) at $t = t_0+0.6$ for \cref{ex:reverse}, computed with the fourth-order CPcOEDG method.}
		\end{figure}
		
		\begin{figure}[htp]
			\centering
			\label{fig:8.10}\includegraphics[width=0.48\linewidth]{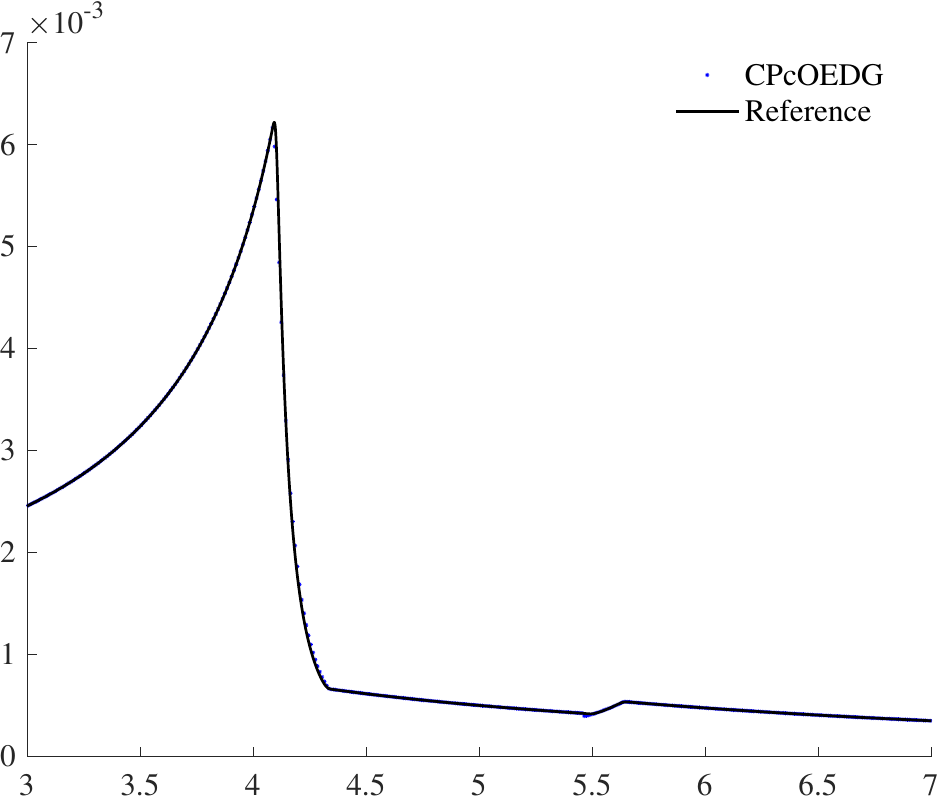}
			\includegraphics[width=0.48\linewidth]{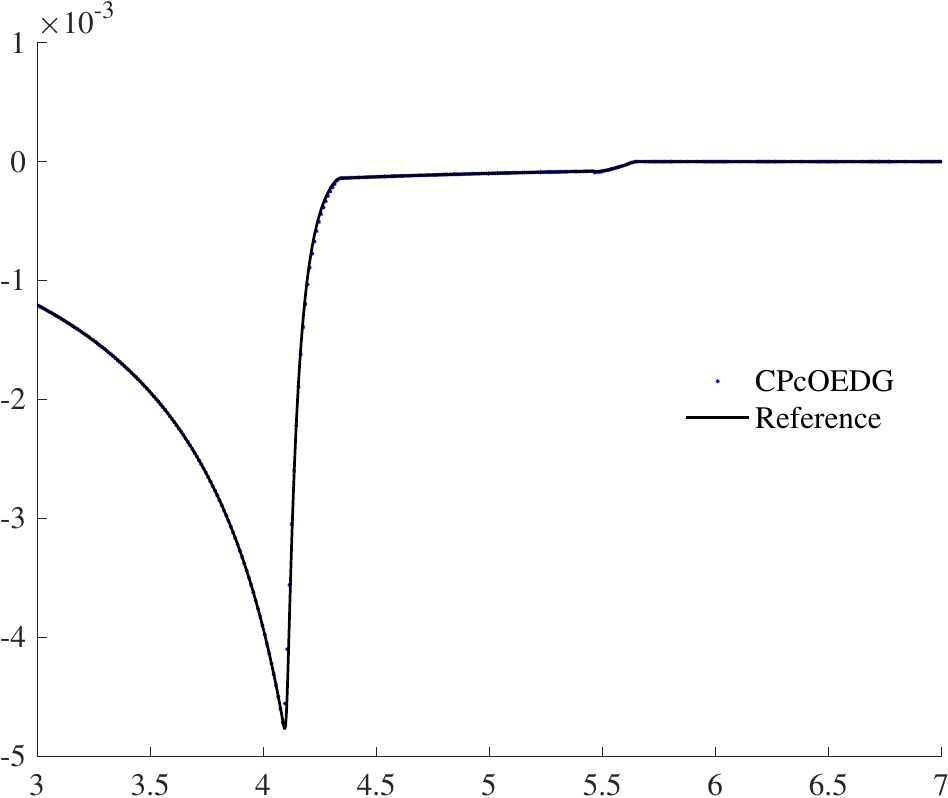}
			\caption{Conservative variables $\mathcal{T}^{00}$ (left) and $\mathcal{T}^{01}$ (right) at $t = t_0+1$ for \cref{ex:reverse}, computed with the fourth-order CPcOEDG method.}
		\end{figure}
		
		\begin{figure}[htp]
			\centering
			\label{fig:8.10AB}\includegraphics[width=0.48\linewidth]{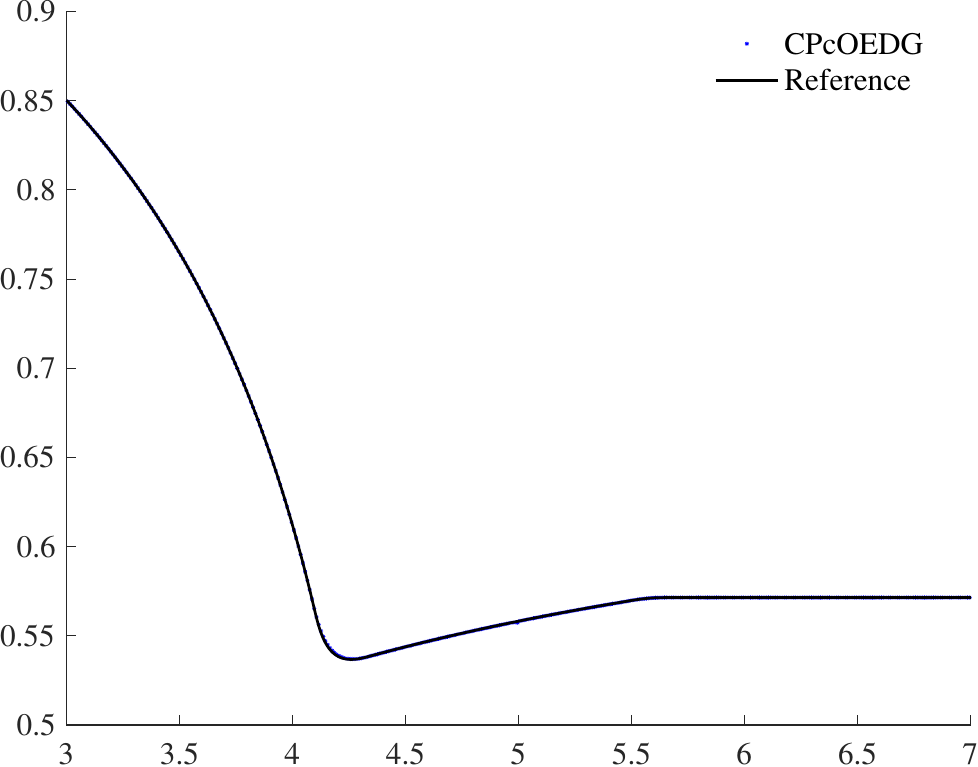}
			\includegraphics[width=0.48\linewidth]{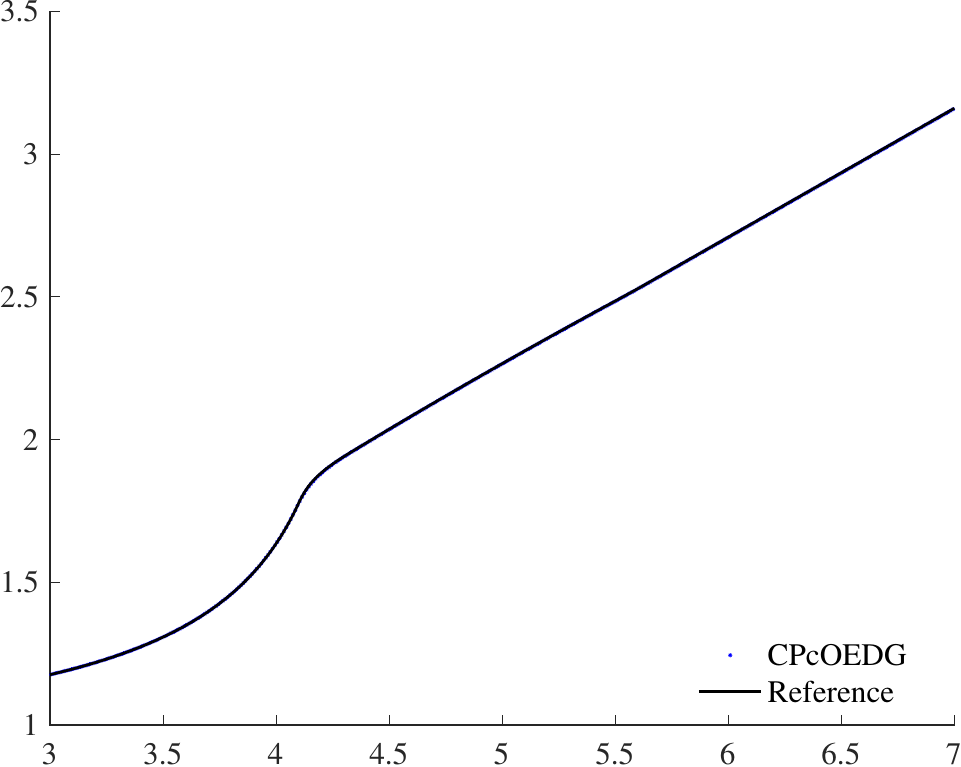}
			\caption{
				Metric functions $A$ (left) and $B$ (right) at $t = t_0 + 1$ for \cref{ex:reverse}, computed with the fourth-order CPcOEDG method.}
		\end{figure}
	\end{exmp}

	\section{Conclusions}
	\label{sec:conclusions}
	This paper has proposed a high-order constraint-preserving compact Oscillation-Eliminating Discontinuous Galerkin (CPcOEDG) method for the spherically symmetric EE system. The proposed scheme successfully addresses the dual challenges of maintaining relativistic hydrodynamic constraints and ensuring the geometric validity of the evolving spacetime metric.
	
	The core theoretical contribution is a rigorous characterization of the admissible state set. We proved that the physical constraints of relativistic fluids, specifically positivity of density and subluminal velocity, define a convex set directly in terms of the conservative variables. This observation enables a robust bound-preserving framework that operates without the complexity of primitive-variable checks. In addition, we introduced a bijective change of variables for the metric potentials, which intrinsically enforces positivity and asymptotic bounds on the metric. By coupling this geometric transformation with a hybrid update strategy and a compatible boundary treatment, our method avoids the order reduction commonly observed in standard RKDG schemes near boundaries.
	
	Extensive numerical experiments, including FRW cosmologies, TOV stars, and black hole accretion, have validated the performance of the scheme. The results confirm that the CPcOEDG method achieves design-order accuracy on smooth solutions while exhibiting strong robustness and oscillation-free shock-capturing capabilities in regimes with strong gravity--fluid interactions.
	
	Future work will focus on extending this constraint-preserving framework to multi-dimensional general relativistic hydrodynamics and exploring its application to more complex astrophysical scenarios involving fully dynamic spacetime evolution.
	
	\appendix
	\section{Numerical flux}\label{app:nf}
	The characteristic eigenvalues of the Jacobian matrix $\partial \left(\sqrt{A B} {\boldsymbol{F}}\right)/\partial \boldsymbol{U}$ are analytically given by
	\begin{displaymath}
		\lambda_1 = \sqrt{A B}\frac{v - \sqrt{P'(\rho)}}{1 - \sqrt{P'(\rho)} v}, \quad \lambda_2 = \sqrt{A B}\frac{v + \sqrt{P'(\rho)}}{1 + \sqrt{P'(\rho)} v}.
	\end{displaymath}
	
	\section{Proof of the violation of the LF splitting property}
	\begin{lemma}[Violation of LF splitting]\label{lem:LFviolation}
		Let $s_1(\boldsymbol U)<0<s_2(\boldsymbol U)$ be the eigenvalues of the Jacobian 
		$\partial \boldsymbol{F}(\boldsymbol{U}) / \partial \boldsymbol{U}$ defined in \eqref{equ:eigenF}. 
		Then, in general, the LF splitting based on $s_1,s_2$ does not preserve the admissible set $G_c$. More precisely, there exist states $\boldsymbol U\in G_c$ such that
		\[
		\boldsymbol{U}+\frac{1}{s_2(\boldsymbol U)}\,\boldsymbol{F}(\boldsymbol{U})\notin G_c,
		\qquad
		\boldsymbol{U}-\frac{1}{s_1(\boldsymbol U)}\,\boldsymbol{F}(\boldsymbol{U})\notin G_c.
		\]
	\end{lemma}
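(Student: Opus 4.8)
The plan is to disprove the splitting property constructively, by exhibiting explicit admissible states at which the split vectors leave $G_c$. The starting point is \cref{the:G}: a vector $\boldsymbol{W}=(W_0,W_1)^\top$ lies in $G_c$ if and only if the two linear functionals $\boldsymbol{W}\cdot\boldsymbol{n}$ with $\boldsymbol{n}=(1,\pm1)^\top$ are both strictly positive. Hence, to certify that a split vector is \emph{not} in $G_c$, it suffices to produce one state $\boldsymbol{U}\in G_c$ at which one of these functionals is nonpositive. To make the functionals explicit, I would first record the primitive-variable identities
\begin{equation*}
\mathcal{T}^{00}\pm\mathcal{T}^{01}=\frac{\rho\pm pv}{1\mp v},\qquad
\mathcal{T}^{01}\pm\mathcal{T}^{11}=\frac{\rho v\pm p}{1\mp v},
\end{equation*}
which follow from \eqref{eq:s_etensor} after clearing $W^2=(1-v^2)^{-1}$; they reduce every relevant quantity to a simple rational function of $(\rho,p,v)$ and $c_s:=\sqrt{P'(\rho)}$.

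For the first bound I would substitute $s_2=(v+c_s)/(1+c_s v)$, so that $1/s_2=(1+c_s v)/(v+c_s)$, and combine $\bigl(\boldsymbol{U}+\tfrac{1}{s_2}\boldsymbol{F}\bigr)\cdot(1,1)^\top$ over a common denominator. The denominator $(1-v)(v+c_s)$ is positive on the subsonic range $v\in(-c_s,c_s)$, where $s_1<0<s_2$, and the numerator collapses to
\begin{equation*}
N_+=\rho\bigl(2v+c_s(1+v^2)\bigr)+p\bigl(1+2c_s v+v^2\bigr).
\end{equation*}
Taking the barotropic law $p=\tilde\sigma^2\rho$ of \eqref{eq:rho2p} with $\tilde\sigma=c_s=1/\sqrt3$ (so $\rho/p=3$) and $v=-\tfrac12$ gives a strictly subsonic state, and a one-line evaluation yields $N_+<0$. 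Concretely, $(\rho,p,v)=(3,1,-\tfrac12)$ produces $\boldsymbol{U}=(\tfrac{13}{3},-\tfrac{8}{3})^\top\in G_c$ while $\bigl(\boldsymbol{U}+\tfrac{1}{s_2}\boldsymbol{F}\bigr)\cdot(1,1)^\top<0$, proving the first inequality.

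For the companion bound I would run the same reduction on $\bigl(\boldsymbol{U}-\tfrac{1}{s_1}\boldsymbol{F}\bigr)\cdot\boldsymbol{n}$; here the algebra is cleaner, since for $\boldsymbol{n}=(1,1)^\top$ the expression factors as $(1+v)(\rho c_s+p)/(c_s-v)$ and for $\boldsymbol{n}=(1,-1)^\top$ as $(1-v)(\rho c_s-p)/(c_s-v)$, both numerators being positive thanks to the sharp assumption $p<c_s\rho$ in \eqref{eq:ass_p}. Thus the functional changes sign precisely at the sonic value $v=c_s$, and a state with $v$ just past $c_s$ furnishes the required counterexample with $\boldsymbol{U}$ still inside $G_c$. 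The main obstacle is that the two inequalities are genuinely independent and cannot be deduced from one another by the natural reflection $R:\mathcal{T}^{01}\mapsto-\mathcal{T}^{01}$, $v\mapsto-v$ (under which $G_c$ is invariant and $s_1\leftrightarrow -s_2$): this reflection maps $\boldsymbol{U}-\tfrac{1}{s_1}\boldsymbol{F}$ onto $\boldsymbol{U}-\tfrac{1}{s_2}\boldsymbol{F}$, which one checks remains admissible throughout the subsonic regime. The delicate bookkeeping therefore lies in tracking the sign of the denominator across the sonic point and in verifying, by \cref{the:G}, that each chosen $\boldsymbol{U}$ itself stays in $G_c$ while its image under the splitting does not.
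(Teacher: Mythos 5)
Your treatment of the first state is correct and is essentially the paper's own argument: your numerator $N_+$ is exactly the quadratic $Q_+(v)=(c_s\rho+p)(1+v^2)+2v(\rho+c_sp)$ that the paper analyzes, and where the paper argues by continuity that $Q_+<0$ for $v$ slightly above $-c_s$ (via $Q_+(-c_s)=(1-c_s^2)(p-c_s\rho)<0$, using the sharp assumption $p<c_s\rho$ from \eqref{eq:ass_p}), you instead exhibit the concrete admissible state $(\rho,p,v)=(3,1,-\tfrac12)$ with $c_s=1/\sqrt3$, for which $Q_+(-\tfrac12)<0$ and hence $\bigl(\boldsymbol U+\tfrac{1}{s_2}\boldsymbol F\bigr)\cdot(1,1)^\top<0$. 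That half is sound and even more explicit than the paper.

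The second half has a genuine gap. Your own factorizations
\[
\Bigl(\boldsymbol U-\tfrac{1}{s_1}\boldsymbol F\Bigr)\cdot(1,1)^\top=\frac{(1+v)\,(c_s\rho+p)}{c_s-v},
\qquad
\Bigl(\boldsymbol U-\tfrac{1}{s_1}\boldsymbol F\Bigr)\cdot(1,-1)^\top=\frac{(1-v)\,(c_s\rho-p)}{c_s-v},
\]
show that both functionals are strictly positive for \emph{every} $v\in(-c_s,c_s)$, i.e., throughout the entire regime in which the lemma's standing hypothesis $s_1<0<s_2$ holds (the paper's proof itself notes this hypothesis is equivalent to $v\in(-c_s,c_s)$). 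Your proposed counterexample, ``a state with $v$ just past $c_s$,'' therefore lies outside the lemma's hypothesis: for $v>c_s$ one has $s_1(\boldsymbol U)>0$ by \eqref{equ:eigenF}, so such a state cannot witness the stated claim---and, by your own algebra, no subsonic state can either. What your computation actually reveals is a sign slip in the statement (and in the paper's one-line treatment of this case): the companion state that genuinely leaves $G_c$ is $\boldsymbol U+\tfrac{1}{s_1}\boldsymbol F$, not $\boldsymbol U-\tfrac{1}{s_1}\boldsymbol F$. This is precisely what the paper's instruction ``replace $\boldsymbol F$ by $-\boldsymbol F$'' produces: the largest eigenvalue of $\partial(-\boldsymbol F)/\partial\boldsymbol U$ is $-s_1$, so the analogue of the first split state is $\boldsymbol U+\tfrac{1}{-s_1}(-\boldsymbol F)=\boldsymbol U+\tfrac{1}{s_1}\boldsymbol F$. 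Equivalently, the reflection $v\mapsto-v$, $\mathcal T^{01}\mapsto-\mathcal T^{01}$ (under which $G_c$ is invariant and $s_1\leftrightarrow-s_2$) maps your failing state $\boldsymbol U+\tfrac{1}{s_2}\boldsymbol F$ to a $G_c$-equivalent copy of $\boldsymbol U+\tfrac{1}{s_1}\boldsymbol F$, so $(\rho,p,v)=(3,1,+\tfrac12)$ immediately gives $\boldsymbol U+\tfrac{1}{s_1}\boldsymbol F\notin G_c$, with no supersonic bookkeeping needed. Your closing remark that the two inequalities are ``genuinely independent'' thus draws the wrong conclusion from a correct observation: the minus-sign state is simply the wrong state, and once the sign is corrected the second failure follows from the first by the very reflection you dismiss.
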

	
	\begin{proof}
		The sign condition $s_1(\boldsymbol U)<0<s_2(\boldsymbol U)$ implies, through the eigenvalue representation \eqref{equ:eigenF}, that the fluid velocity satisfies $v\in(-c_s,c_s)$.
		We first show that 
		\[
		\boldsymbol{U}+\frac{1}{s_2(\boldsymbol U)}\,\boldsymbol{F}(\boldsymbol{U})\notin G_c
		\]
		for a suitable admissible state $\boldsymbol U\in G_c$. 
		The argument for the other state
		$\boldsymbol U - s_1(\boldsymbol U)^{-1}\boldsymbol F(\boldsymbol U)$ is analogous and can be obtained by replacing $\boldsymbol F$ with $-\boldsymbol F$.
		
		Define
		\[
		\boldsymbol R^+ := \begin{pmatrix} R_1^+ \\ R_2^+ \end{pmatrix} 
		:= \boldsymbol U + \frac{1}{s_2(\boldsymbol U)}\,\boldsymbol F(\boldsymbol U).
		\]
		A direct computation shows that the scalar quantity
		\begin{equation}\label{eq:rootsofR}
			P_+(v) := R_1^+ + R_2^+ 
			= (1+v)\,Q_+(v),
		\end{equation}
		where
		\[
		Q_+(v) := (c_s\rho+p)(1+v^2) + 2v(\rho+c_s p),
		\]
		is a cubic polynomial in $v$. The factorization \eqref{eq:rootsofR} implies that $P_+(v)$ has three real roots $v_0,v_1,v_2$ satisfying
		\[
		v_0 v_2 = 1,\qquad
		v_0 + v_2 < 0,\qquad
		v_1 = -1.
		\]
		Hence $v_0< -1$ and $v_2\in(-1,0)$, while $v_1=-1$ is the remaining root.
		
		The admissible set $G_c$ is characterized (in this transformed frame) by positivity conditions of the form
		$R_1^+ \pm R_2^+>0$. Therefore, if we can find a velocity $v\in(-c_s,c_s)$ such that
		\[
		P_+(v) = R_1^+ + R_2^+ < 0,
		\]
		then the corresponding LF-split state $\boldsymbol R^+$ violates at least one of the defining inequalities of $G_c$.
		
		Since $P_+(v)=(1+v)Q_+(v)$ and $1+v>0$ for all $v\in(-1,1)$, it suffices to study the sign of $Q_+(v)$.
		Under the barotropic equation of state and the structural relation \eqref{eq:key_re}, we have
		\[
		\begin{aligned}
			Q_+(-c_s)
			&= (c_s\rho+p)(1+c_s^2)+2(-c_s)(\rho+c_s p)\\
			&= (c_s\rho+p) + c_s^2(c_s\rho+p) -2c_s\rho -2c_s^2 p\\
			&= (p-c_s\rho) + c_s^2(c_s\rho-p)\\
			&= (1-c_s^2)(p-c_s\rho).
		\end{aligned}
		\]
		Since $0<c_s<1$ and $c_s\rho>p$ by assumption, we have $1-c_s^2>0$ and $p-c_s\rho<0$, hence
		\[
		Q_+(-c_s) = (1-c_s^2)(p-c_s\rho) < 0.
		\]
		Because $-c_s\in(-1,0)$ and $1-c_s>0$, this implies
		\[
		P_+(-c_s) = (1-c_s)\,Q_+(-c_s) < 0.
		\]
		For such a state, if the velocity  $v$ approaches $v=-c_s$ from the interior of $(-c_s,c_s)$, we have $R_1^+ + R_2^+<0$, showing $\boldsymbol R^+\notin G_c$. The proof for
		$\boldsymbol U - s_1(\boldsymbol U)^{-1}\boldsymbol F(\boldsymbol U)$ follows by the same argument, after replacing $\boldsymbol F$ by $-\boldsymbol F$, and is omitted.
	\end{proof}
	
	\section{The fourth-order CPcOEDG method}\label{app:RK4}
	For completeness, we list the fourth-order CPcOEDG scheme following \cite{LiuSunZhang2025}:
	\begin{align*}
		\boldsymbol{U}_\sigma^{n} &= \mathcal{B}\,\mathcal{F}_{\tau}\,\boldsymbol{U}_h^n,\\[2pt]
		\int_{I_j} \boldsymbol{U}_h^{n,1}\!\cdot\!\boldsymbol{\phi}\,{\mathrm{d}r}
		&= \int_{I_j} \boldsymbol{U}_\sigma^{n}\!\cdot\!\boldsymbol{\phi}\,{\mathrm{d}r}
		+ \frac{\Delta t}{2}\,
		\mathcal{L}_j^{\mathrm{loc}}\!\bigl(\boldsymbol{U}_\sigma^{n,0},A_h^{n,0},B_h^{n,0};\boldsymbol{\phi}\bigr),\\[2pt]
		\int_{I_j} \boldsymbol{U}_h^{n,2}\!\cdot\!\boldsymbol{\phi}\,{\mathrm{d}r}
		&= \int_{I_j} \boldsymbol{U}_\sigma^{n}\!\cdot\!\boldsymbol{\phi}\,{\mathrm{d}r}
		+ \frac{\Delta t}{2}\,
		\mathcal{L}_j^{\mathrm{loc}}\!\bigl(\mathcal{B}\boldsymbol{U}_h^{n,1},A_h^{n,1},B_h^{n,1};\boldsymbol{\phi}\bigr),\\[2pt]
		\int_{I_j} \boldsymbol{U}_h^{n,3}\!\cdot\!\boldsymbol{\phi}\,{\mathrm{d}r}
		&= \int_{I_j} \boldsymbol{U}_\sigma^{n}\!\cdot\!\boldsymbol{\phi}\,{\mathrm{d}r}
		+ {\Delta t}\,
		\mathcal{L}_j^{\mathrm{loc}}\!\bigl(\mathcal{B}\boldsymbol{U}_h^{n,2},A_h^{n,2},B_h^{n,2};\boldsymbol{\phi}\bigr),\\[2pt]
		\int_{I_j} \boldsymbol{U}_h^{n+1}\!\cdot\!\boldsymbol{\phi}\,{\mathrm{d}r}
		&= \int_{I_j} \boldsymbol{U}_\sigma^{n}\!\cdot\!\boldsymbol{\phi}\,{\mathrm{d}r}
		+ \frac{\Delta t}{6}\,
		\mathcal{L}_j^{\mathrm{DG}}\!\bigl(\boldsymbol{U}_\sigma^{n,0},A_h^{n,0},B_h^{n,0};\boldsymbol{\phi}\bigr)\\
		&\quad+ \frac{\Delta t}{3}\,
		\mathcal{L}_j^{\mathrm{DG}}\!\bigl(\mathcal{B}\boldsymbol{U}_h^{n,1},A_h^{n,1},B_h^{n,1};\boldsymbol{\phi}\bigr)\\
		&\quad+ \frac{\Delta t}{3}\,
		\mathcal{L}_j^{\mathrm{DG}}\!\bigl(\mathcal{B}\boldsymbol{U}_h^{n,2},A_h^{n,2},B_h^{n,2};\boldsymbol{\phi}\bigr)\\	
		&\quad+ \frac{\Delta t}{6}\,
		\mathcal{L}_j^{\mathrm{DG}}\!\bigl(\mathcal{B}\boldsymbol{U}_h^{n,3},A_h^{n,3},B_h^{n,3};\boldsymbol{\phi}\bigr).
	\end{align*}

	\bibliographystyle{siamplain}
	\bibliography{references}
\end{document}